\numberwithin{equation}{section}
\numberwithin{figure}{section}
\theoremstyle{plain}
\newtheorem{thm}{\protect\theoremname}[section]
\theoremstyle{remark}
\newtheorem{rem}[thm]{\protect\remarkname}
\theoremstyle{plain}
\newtheorem{conjecture}[thm]{\protect\conjecturename}
\theoremstyle{plain}
\newtheorem{question}[thm]{\protect\questionname}
\theoremstyle{definition}
\newtheorem{defn}[thm]{\protect\definitionname}
\theoremstyle{plain}
\newtheorem{prop}[thm]{\protect\propositionname}
\theoremstyle{plain}
\newtheorem{lem}[thm]{\protect\lemmaname}
\theoremstyle{remark}
\newtheorem{notation}[thm]{\protect\notationname}
\newenvironment{lyxlist}[1]
	{\begin{list}{}
		{\settowidth{\labelwidth}{#1}
		 \setlength{\leftmargin}{\labelwidth}
		 \addtolength{\leftmargin}{\labelsep}
		 }}
	{\end{list}}
\newcommand{\lyxaddress}[1]{
	\par {\raggedright #1
	\vspace{1.4em}
	\noindent\par}
}
\providecommand{\conjecturename}{Conjecture}
\providecommand{\definitionname}{Definition}
\providecommand{\lemmaname}{Lemma}
\providecommand{\notationname}{Notation}
\providecommand{\propositionname}{Proposition}
\providecommand{\questionname}{Question}
\providecommand{\remarkname}{Remark}
\providecommand{\theoremname}{Theorem}
\begin{document}
\global\long\def\F{\mathrm{\mathbf{F}} }%
\global\long\def\Aut{\mathrm{Aut}}%
\global\long\def\C{\mathbf{C}}%
\global\long\def\H{\mathbb{H}}%
\global\long\def\U{\mathbf{U}}%
\global\long\def\P{\mathcal{P}}%

\global\long\def\trace{\mathrm{tr}}%
\global\long\def\End{\mathrm{End}}%

\global\long\def\L{\mathcal{L}}%
\global\long\def\W{\mathcal{W}}%
\global\long\def\E{\mathbb{E}}%
\global\long\def\SL{\mathrm{SL}}%
\global\long\def\R{\mathbf{R}}%
\global\long\def\Pairs{\mathrm{PowerPairs}}%
\global\long\def\Z{\mathbf{Z}}%
\global\long\def\rs{\to}%
\global\long\def\A{\mathcal{A}}%
\global\long\def\a{\mathbf{a}}%
\global\long\def\rsa{\rightsquigarrow}%
\global\long\def\ep{\varepsilon}%
\global\long\def\b{\mathbf{b}}%
\global\long\def\df{\mathrm{def}}%
\global\long\def\eqdf{\stackrel{\df}{=}}%
\global\long\def\ZZ{\mathcal{Z}}%
\global\long\def\Tr{\mathrm{Tr}}%
\global\long\def\N{\mathbf{N}}%
\global\long\def\std{\mathrm{std}}%
\global\long\def\HS{\mathrm{H.S.}}%
\global\long\def\e{\mathbf{e}}%
\global\long\def\c{\mathbf{c}}%
\global\long\def\d{\mathbf{d}}%
\global\long\def\AA{\mathbf{A}}%
\global\long\def\BB{\mathbf{B}}%
\global\long\def\u{\mathbf{u}}%
\global\long\def\v{\mathbf{v}}%
\global\long\def\spec{\mathrm{spec}}%

\title{Spectral gap for Weil-Petersson random surfaces with cusps}
\author{Will Hide }

\maketitle
\date{\vspace{-5ex}}
\begin{abstract}
We show that for any $\ep>0$, $\alpha\in[0,\frac{1}{2})$, as $g\to\infty$
a generic finite-area genus $g$ hyperbolic surface with $n=O\left(g^{\alpha}\right)$
cusps, sampled with probability arising from the Weil-Petersson metric
on moduli space, has no non-zero eigenvalue of the Laplacian below
$\frac{1}{4}-\left(\frac{2\alpha+1}{4}\right)^{2}-\ep$. For $\alpha=0$
this gives a spectral gap of size $\frac{3}{16}-\ep$ and for any
$\alpha<\frac{1}{2}$ gives a uniform spectral gap of explicit size.
\end{abstract}
{\small{}\tableofcontents{}}{\small\par}

\section{Introduction}

A hyperbolic surface is a smooth, connected, orientable Riemannian
surface with constant Gaussian curvature $-1$. Let $X$ be a finite-area
non-compact hyperbolic surface. The $L^{2}(X)$ spectrum of the Laplacian
$\Delta_{X}$ consists of: 
\begin{itemize}
\item A simple eigenvalue at $0$ and possibly finitely many eigenvalues
in $\Big(0,\frac{1}{4}\Big)$.
\item Absolutely continuous spectrum $\Big[\frac{1}{4},\infty\Big)$ with
multiplicity equal to the number of cusps of $X$.
\item Possibly infinitely many discrete eigenvalues in $\Big[\frac{1}{4},\infty\Big)$,
embedded in the absolutely continuous spectrum.
\end{itemize}
Spectral gap refers to the gap between the zero eigenvalue and the
remaining spectrum. The spectral gap is closely related to the connectivity
of a surface and the rate of mixing of the geodesic flow. We are interested
in the size of the spectral gap for a random surface with large genus.
The random model we shall consider is the Weil-Petersson model, arising
from the Weil-Petersson metric on moduli space, explained in Section
\ref{sec:Geometric-background}.

Motivation for this paper arises from recent results for compact surfaces.
In contrast to our setting, the spectrum of a compact hyperbolic surface
$Y$ consists of eigenvalues 
\[
0=\lambda_{0}\left(Y\right)<\lambda_{1}\left(Y\right)\leqslant\dots\leqslant\lambda_{k}\left(Y\right)\leqslant\dots,
\]
with $\lambda_{j}\left(Y\right)\to\infty$ as $j\to\infty$. The first
spectral gap result for Weil-Petersson random compact surfaces was
due to Mirzakhani in \cite{Mi13}, who proved the following.
\begin{thm}[Mirzakhani '13]
The Weil-Petersson probability that a genus $g$ compact hyperbolic
surface has a non-zero Laplacian eigenvalue below $\frac{1}{4}\left(\frac{\log(2)}{2\pi+\log(2)}\right)^{2}\approx0.0024$
tends to zero as $g\to\infty$.
\end{thm}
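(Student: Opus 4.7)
The strategy combines Cheeger's inequality with a first moment estimate on short separating multigeodesics, the latter computed via Mirzakhani's integration formula on moduli space. Write $h_0 := \log(2)/(2\pi+\log(2))$. By Cheeger's inequality, $\lambda_1(Y)\geq h(Y)^2/4$ where $h(Y)$ is the Cheeger constant of $Y$, so it suffices to prove that the Weil-Petersson probability of $\{h(Y) < h_0\}$ tends to zero as $g\to\infty$.

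The first step is to reduce control of $h(Y)$ to counting simple closed separating multigeodesics. Standard isoperimetric arguments on compact hyperbolic surfaces show that the infimum defining $h(Y)$ is approached by regions with piecewise geodesic boundary; thus if $h(Y) < h_0$ then $Y$ admits a separating multigeodesic $\gamma=\gamma_1\cup\cdots\cup\gamma_k$ such that some component $S$ of $Y\setminus\gamma$ satisfies $\text{area}(S)\leq\text{area}(Y)/2$ and $\ell(\gamma)<h_0\cdot\text{area}(S)$. If $S$ has genus $g_1$, Gauss-Bonnet gives $\text{area}(S)=2\pi(2g_1-2+k)$, so the length bound reads $\ell(\gamma)<L(g_1,k):=2\pi h_0(2g_1-2+k)$.

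The second step is the first moment method. For each topological type $\tau=(g_1,k)$ of separating multicurve, Mirzakhani's integration formula expresses the expected number $N_\tau$ of separating multigeodesics of type $\tau$ with $\ell(\gamma)<L(g_1,k)$ as
\[
\E_{WP}[N_\tau] \;=\; \frac{c_\tau}{V_g}\int_{[0,L(g_1,k)]^k} V_{g_1,k}(\vec\ell)\,V_{g-g_1-k+1,k}(\vec\ell)\,\ell_1\cdots\ell_k\,d\ell_1\cdots d\ell_k,
\]
where $V_{g,n}(\vec\ell)$ is Mirzakhani's Weil-Petersson volume polynomial for bordered moduli space, $V_g=V_{g,0}$, and $c_\tau$ is a combinatorial symmetry factor. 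By Markov's inequality it suffices to show $\sum_\tau\E_{WP}[N_\tau]\to 0$.

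The main obstacle lies in bounding this sum uniformly across topological types. The key analytic inputs are the polynomial bounds on $V_{g,n}(\vec\ell)$ in the boundary lengths and Mirzakhani's asymptotic formulas for the ratios $V_{g_1,k}V_{g-g_1-k+1,k}/V_g$, which must be combined with careful combinatorial bookkeeping on the symmetry factors $c_\tau$. The length-cutoff contributes a factor of order $L(g_1,k)^{2k}$, which roughly competes with volume growth of the form $V_{g_1,k}(\vec\ell)\sim (g_1!)^2$; the constant $h_0$ is precisely tuned so that in the critical regime these contributions balance, corresponding to the equation $e^{2\pi h_0}\cdot 2^{h_0}=2$, after which $\sum_\tau\E_{WP}[N_\tau]$ is dominated by a convergent geometric series that vanishes as $g\to\infty$.
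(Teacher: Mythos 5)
This theorem is cited in the paper from \cite{Mi13} but not reproved, so the comparison must be against Mirzakhani's original argument. Your proposal correctly reproduces her strategy: apply Cheeger's inequality $\lambda_1\geq h^2/4$, note that on a compact hyperbolic surface the Cheeger constant is realized by a simple closed separating multigeodesic (so $h(Y)<h_0$ forces a short separating multicurve), and then apply a first-moment bound via the integration formula (the paper's Theorem~\ref{thm:MIF}) and Markov's inequality over all topological types. Your verification that $h_0=\log 2/(2\pi+\log 2)$ satisfies $(2\pi+\log 2)h_0=\log 2$, equivalently $e^{2\pi h_0}\cdot 2^{h_0}=2$, correctly identifies the critical equation determining the constant.

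Two cautions about what you have glossed over. First, the heuristic ``$V_{g_1,k}(\vec\ell)\sim(g_1!)^2$'' is off: the correct large-genus asymptotic (Mirzakhani--Zograf; cf.\ the paper's Theorem~\ref{Variable n asymptotic}) is $V_{g,n}\sim B\,g^{-1/2}(2g-3+n)!(4\pi^2)^{2g-3+n}$, and the ratio $V_{g_1,k}V_{g-g_1-k+1,k}/V_g$ must be controlled with this precise statement, not a Stirling caricature. Second, ``dominated by a convergent geometric series'' hides the genuinely difficult part of Mirzakhani's proof: summing over $(g_1,k)$ requires delicate bounds on the volume ratios, the $\sinh$-type growth from the boundary-length integrals, and the combinatorial multiplicity of topological types, all balanced uniformly in $g$. (Compare the paper's own Lemma~\ref{Summing over} and Lemma~\ref{Horrible bound}, which carry out an analogous but still harder bookkeeping in the cusped setting with multinomial factors $\binom{n}{j}$.) You correctly flag this as ``the main obstacle,'' and indeed this is where the substance of Mirzakhani's paper lies; a complete proof must execute those estimates, which your sketch does not. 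As an outline of her argument, however, your proposal is on target.
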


Recently this result was improved, independently by Wu and Xue in
\cite{WX21} and Lipnowski and Wright in \cite{LW21} to the following.
\begin{thm}[Wu-Xue and Lipnowski-Wright '21]
\label{Compact case}For any $\ep>0$, the Weil-Petersson probability
that a genus $g$ compact hyperbolic surface has a non-zero Laplacian
eigenvalue below $\frac{3}{16}-\ep$ tends to zero as $g\to\infty$.
\end{thm}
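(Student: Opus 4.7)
\medskip\noindent
The plan is to detect small Laplacian eigenvalues via Selberg's trace formula with a carefully chosen test function, to bound the expected geometric side using Mirzakhani's integration formula, and to conclude via a moment inequality. Parametrise the spectrum as $\lambda_j = \tfrac14 + r_j^2$ with $r_j \in \R$ for principal eigenvalues and $r_j = i t_j$, $t_j \in (0, \tfrac12]$, for exceptional ones; the bound $\lambda_1 < \tfrac{3}{16}-\ep$ is equivalent to $t_1 > \tfrac14 + \delta$ for some $\delta = \delta(\ep) > 0$. I would pick an even, non-negative test function $f\colon \R \to \R_{\ge 0}$ whose Fourier transform $\hat f$ is real-valued and supported in $[-L, L]$ with $L \asymp \log g$, and arrange that on the exceptional segment $t \in [\tfrac14+\delta, \tfrac12]$ the entire extension $f(it)$ is as large as possible relative to the Paley--Wiener bound $|f(it)| \le C e^{Lt}$. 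A concrete first attempt is the squared Fej\'er-type kernel $f(r) = \bigl(\sin(Lr/2)/(r/2)\bigr)^{2}$, with $f(it) \asymp e^{Lt}/t^{2}$; the actual proofs use considerably more refined constructions, Beurling--Selberg style majorants combined with a moment/polynomial method.

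Applying Selberg's trace formula on $Y$ yields
\begin{equation*}
\sum_{j} f(r_j) \;=\; \frac{\mathrm{Area}(Y)}{4\pi}\int_{\R} f(r)\,r\tanh(\pi r)\,dr \;+\; \sum_{\gamma \text{ prim.}}\sum_{k\ge 1} \frac{\ell_\gamma\,\hat f(k\ell_\gamma)}{2\sinh(k\ell_\gamma/2)} \;=:\; I(Y,f) + G(Y,f).
\end{equation*}
Since $f \ge 0$ on $\R$ and by evenness $f(it)\ge 0$ on the exceptional axis, dropping all positive eigenvalue contributions except for $\lambda_0=0$ and $\lambda_1$ gives $f(it_1) \le I(Y,f) + G(Y,f) - f(i/2)$. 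The volume term and $f(i/2)$ are deterministic, so the task reduces to controlling $\E_{WP}[G(Y,f)]$ (or a suitable higher moment thereof).

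To do this I would apply Mirzakhani's integration formula, which for each mapping class group orbit $[\gamma_0]$ of a multicurve on $S_g$ rewrites $\E_{WP}\bigl[\sum_{\gamma\in[\gamma_0]} F(\ell_\gamma)\bigr]$ as an explicit integral of $F$ against a density built from the Weil--Petersson volume polynomials $V_{g',n'}$ of the lower-complexity moduli spaces obtained by cutting $S_g$ along a representative of $[\gamma_0]$. The primitive closed geodesics would then be split by topological type: simple non-separating, simple separating, and non-simple types indexed by combinatorial complexity. For simple curves the Mirzakhani--Petri limiting density $\tfrac{2\sinh^{2}(\ell/2)}{\ell}$, combined with uniform Mirzakhani--Zograf bounds on $V_{g',n'}/V_g$ valid for $\ell \le L$, controls the contribution. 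Non-simple types are enumerated by complexity and bounded using polynomial volume bounds together with the decay factor $1/\sinh(k\ell/2)\lesssim e^{-k\ell/2}$, which makes the sum over iterates $k$ geometrically convergent.

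The hardest step is the quantitative balance: a naive first-moment estimate yields $\E_{WP}[G(Y,f)]$ on roughly the same exponential order as $f(it_1)$ when $t_1 \in (\tfrac14, \tfrac12)$, so pure Markov's inequality on the raw geometric side is insufficient. This is where the argument becomes delicate. Wu--Xue use a second-moment/variance bound together with Mirzakhani's formulas for expected products $F(\ell_{\gamma_1})F(\ell_{\gamma_2})$, while Lipnowski--Wright use a polynomial method with an optimised test function; in either approach the threshold $\tfrac{3}{16} = \tfrac14 - \bigl(\tfrac14\bigr)^{2}$ emerges from the balance between the Paley--Wiener growth of $f(it)$ on the imaginary axis and the refined bound on the expected geometric side provided by the moment method, in combination with the $1/\sinh(\ell/2)$ weighting against the Mirzakhani--Petri density. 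With such a refined bound $\E_{WP}[G(Y,f)] = o\bigl(\inf_{t \ge 1/4+\delta} f(it)\bigr)$ in hand, Markov's (or Chebyshev's) inequality converts the expectation bound into the desired vanishing probability statement.
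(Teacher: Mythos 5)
Your overall framework --- a trace formula with a band-limited test function, Mirzakhani's integration formula applied after splitting primitive geodesics by topological type, and a Markov-type conclusion --- is the right one, and it is the one used both in \cite{WX21} and in the non-compact analogue proved in this paper. However, your description of how the quantitative balance is achieved contains a genuine error about the method.

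You write that a naive first-moment bound on the geometric side $G(Y,f)$ is of the same exponential order as $f(it_1)$, so that pure Markov on the raw geometric side fails, and you attribute the fix in Wu--Xue to a second-moment/variance computation and Mirzakhani formulas for $\E\bigl[F(\ell_{\gamma_1})F(\ell_{\gamma_2})\bigr]$. That is not what they (or this paper) do: the argument is a \emph{first-moment} argument throughout, and the refinement comes from a cancellation you have already written down but not exploited. In your notation, the inequality obtained by keeping only $\lambda_0=0$ and $\lambda_1$ on the spectral side is
\[
f(it_1)\;\leqslant\;I(Y,f)+G(Y,f)-f(i/2),
\]
and the correct move is to apply Markov to the non-negative random variable $G(Y,f)-f(i/2)$ (plus a deterministic correction of polynomial size), not to $G(Y,f)$. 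The reason this works is that the dominant contribution to $\E_{WP}[G(Y,f)]$ comes from simple non-separating geodesics; Mirzakhani's formula together with $V_{g-1,2}/V_g = 1 + O(1/g)$ gives $\E_{WP}\bigl[\sum_{\gamma \text{ simple nonsep}}\frac{\ell_\gamma}{2\sinh(\ell_\gamma/2)}\hat f(\ell_\gamma)\bigr] \approx \int_0^{L}2\sinh(x/2)\,\hat f(x)\,dx$, while $f(i/2)=\int_0^{L}2\cosh(x/2)\,\hat f(x)\,dx$. Since $2(\cosh(x/2)-\sinh(x/2))=2e^{-x/2}$, the main terms cancel up to $O(1)$, and the remaining contributions (separating simple curves, iterates, non-simple curves, and volume-ratio error terms) each have expectation $\ll g^{1+o(1)}$. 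In particular the asserted bound $\E_{WP}[G(Y,f)]=o\bigl(\inf_{t\ge 1/4+\delta}f(it)\bigr)$ is false as stated ($\E_{WP}[G(Y,f)]\asymp f(i/2)\asymp g^2$, while $f(i(1/4+\delta))\asymp g^{1+4\delta}$); what is true and what suffices is $\E_{WP}\bigl[G(Y,f)-f(i/2)\bigr]\ll g^{1+o(1)}$. The $3/16$ threshold then does arise as you describe, from $4\sqrt{1/4-\lambda_1}>1$ balancing $f(it_1)\gtrsim g^{4\sqrt{1/4-\lambda_1}}$ against $g^{1+o(1)}$, with no second moment required.
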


The purpose of this paper is to extend Theorem \ref{Compact case}
to non-compact finite-area surfaces. We prove the following.
\begin{thm}
\label{Main Theorem of paper}For any $0\leqslant\alpha<\frac{1}{2}$,
if $n=O\left(g^{\alpha}\right)$ then for any $\ep>0$ the Weil-Petersson
probability that a genus $g$ non-compact finite-area surface with
$n$ cusps has a non-zero Laplacian eigenvalue below $\frac{1}{4}-\left(\frac{2\alpha+1}{4}\right)^{2}-\ep$
tends to zero as $g\to\infty$.
\end{thm}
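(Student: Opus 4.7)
The plan is to adapt the approach of Lipnowski--Wright and Wu--Xue from the compact setting to the cusped case. Writing non-zero eigenvalues below $\frac{1}{4}$ as $\lambda=s(1-s)$ with $s\in(\frac{1}{2},1)$, the target bound is equivalent to showing that, with high probability as $g\to\infty$, there is no eigenvalue with $s\geqslant s_{\alpha}\eqdf\frac{3+2\alpha}{4}+\ep'$. I would apply the Selberg trace formula for finite-area hyperbolic surfaces to a non-negative test pair $(h_{T},\hat{h}_{T})$ with $\hat{h}_{T}$ supported in $[-T,T]$ and $h_{T}(s)\gtrsim e^{(2s-1)T}$ for $s>\frac{1}{2}$, with the length cutoff $T=(1-\delta)\log g$. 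The first moment of the geometric side with respect to Weil--Petersson measure on $\mathcal{M}_{g,n}$, computed via Mirzakhani's integration formula, should be of order $g^{\frac{1}{2}+\alpha+o(1)}$. A hypothetical eigenvalue $s_{j}\geqslant s_{\alpha}$ would force the spectral side to be at least $g^{(2s_{\alpha}-1)(1-\delta)}=g^{\frac{1+2\alpha}{2}+o(1)}$, so choosing $\delta$ small and balancing the two exponents rules out such an eigenvalue in expectation, and hence with high probability via Markov's inequality.

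Implementing this in the cusped setting requires handling three new ingredients in the trace formula compared with the compact case: the continuous spectrum via Eisenstein series, the parabolic identity/cuspidal terms, and the scattering determinant term involving $\varphi'/\varphi$. The continuous spectrum is absorbed by choosing $h_{T}\geqslant 0$ on the real axis and discarding it from a lower bound on the spectral side. The parabolic and scattering contributions are explicit and scale essentially linearly in $n$ with at most polynomial dependence on $T$; with $n=O(g^{\alpha})$ and $T=O(\log g)$ they are of size $g^{\alpha+o(1)}$, which is negligible compared to the main hyperbolic geometric contribution.

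The main geometric input is Mirzakhani's integration formula on $\mathcal{M}_{g,n}$: the Weil--Petersson expectation of a length function summed over a topological type of multi-curve is an integral of products of volume polynomials $V_{g',n'}(L)$ over the cut surface. Summing over all topological types of closed geodesics of length at most $T$, the dominant contribution in the compact case comes from non-separating simple primitive curves (their low powers of length close to $T$ are what produce the $\frac{3}{16}$ obstruction via the $e^{-\ell/2}$ weight), and I expect the same structure to persist here with cusped volume polynomials. New topological types introduced by the cusps --- curves peripheral to cusps, or curves separating cusp groups --- contribute subleading factors of polynomial size in $n$ times the compact estimate, so the target bound $g^{\frac{1}{2}+\alpha+o(1)}$ goes through when $n=O(g^{\alpha})$.

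The main obstacle is obtaining uniform ratio estimates for $V_{g,n}(L)$ in the joint regime $n=O(g^{\alpha})$, $L=O(\log g)$. The Lipnowski--Wright/Wu--Xue arguments use fine asymptotics of $V_{g}(L)$ due to Mirzakhani and Mirzakhani--Zograf, and extending these \emph{uniformly} to $V_{g,n}(L)$ as $n$ grows with $g$ is delicate: this is the precise source of the restriction $\alpha<\frac{1}{2}$. Controlling the combinatorial explosion when enumerating topological types of short multi-curves on a surface with many cusps --- and in particular verifying that configurations involving many cusp-peripheral components do not create a new dominant term overtaking the primitive-non-separating contribution --- is where I expect the heart of the technical work to lie.
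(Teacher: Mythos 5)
Your plan runs the full Selberg trace formula for finite-area surfaces and asserts that the scattering and continuous-spectrum terms ``are explicit and scale essentially linearly in $n$ with at most polynomial dependence on $T$.'' That assertion is the crux, and it is precisely what the paper declares it does not know how to prove: the scattering determinant $\varphi(s)$ of a generic cusped surface depends on the geometry of $X$ in a way that is not controlled by its topology, and there is no uniform bound over $\mathcal{M}_{g,n}$ for $\int h_T(r)\,\tfrac{\varphi'}{\varphi}\!\left(\tfrac{1}{2}+ir\right)dr$ of the required strength. (For arithmetic congruence surfaces $\varphi$ is a ratio of Dirichlet $L$-functions, which is why this term is tractable for $X(N)$, but that is entirely special.) The paper sidesteps the full trace formula: it begins from Gamburd's pre-trace \emph{inequality} $\sum_{\lambda_j<1/4}\hat f_T(r(\lambda_j))|u_j(z)|^2\leqslant\sum_{\gamma\in\Gamma_X}k_T(z,\gamma z)$ and integrates it only over the pre-compact truncation $D(2)$ of the fundamental domain rather than all of $\mathcal F$. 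This renders the parabolic contribution integrable (size $O(n\log g)$) and never produces a continuous-spectrum or scattering term at all; the cost is a fixed loss constant $c(\kappa)$ in front of $\hat f_T(r(\lambda_1))$, supplied by Gamburd's concentration lemma, which says small eigenfunctions cannot localize in the cusp regions. If you insist on the full trace formula you must supply a uniform over $\mathcal{M}_{g,n}$ bound on $\varphi'/\varphi$, and no such bound is given or cited.

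Second, the choice $T=(1-\delta)\log g$ does not produce a contradiction, because the identity contribution to the geometric side is $\mathrm{Vol}(D)\,k_T(0)\asymp g/T\asymp g/\log g$, and this term does not cancel or shrink with $T$; meanwhile the spectral payoff from a hypothetical eigenvalue with parameter $s$ is $\hat f_T \asymp T\,g^{(2s-1)(1-\delta)}$, which is $\ll g$ for every $s\leqslant 1$ once $\delta>0$. The paper instead takes $T=4\log g$, so that the zero-eigenvalue term $\tfrac{\mathrm{Vol}(D)}{\mathrm{Vol}(X)}\hat f_T(i/2)\asymp g^2$ cancels the dominant hyperbolic geometric contribution and the surviving geometric error is $O(n^2g^{1+4\ep_1})=O(g^{1+2\alpha+4\ep_1})$, which the spectral side $g^{4(1-\ep)\sqrt{1/4-\lambda_1}}\gg g^{1+2\alpha+2\ep-4\ep^2}$ then beats for $\ep_1$ small relative to $\ep$. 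Relatedly, your claimed exponents $g^{\frac{1}{2}+\alpha+o(1)}$ (geometric) and $g^{\frac{1+2\alpha}{2}+o(1)}$ (spectral) agree to leading order, so even inside your own bookkeeping Markov's inequality gives nothing; the argument lives entirely in the $\ep$-vs.-$\ep_1$ interplay, which your sketch does not set up. The remaining program --- Mirzakhani integration over multi-curve orbits, dominance of simple non-separating curves, taming the multinomial factors $\binom{n}{a_0,\dots,a_q}$ against volume ratios via the Mirzakhani--Zograf asymptotic $V_{g,n}=\tfrac{B}{\sqrt g}(2g-3+n)!(4\pi^2)^{2g-3+n}(1+O(\tfrac{1+n^2}{g}))$ --- agrees in outline with the paper and is where the restriction $\alpha<\tfrac12$ enters, as you anticipate.
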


When $\alpha=0$, i.e. the number of cusps is bounded as $g\to\infty$,
Theorem \ref{Main Theorem of paper} returns a spectral gap of size
$\frac{3}{16}-\ep$ as in Theorem \ref{Compact case}. For any $\alpha<\frac{1}{2}$,
Theorem \ref{Main Theorem of paper} gives an explicit positive uniform
spectral gap. 

The hypothesis $n=O\left(g^{\alpha}\right)$ for $0\leqslant\alpha<\frac{1}{2}$
has geometric consequences in terms of Benjamini-Schramm convergence.
In \cite[Corollary 4.4]{Mo21b}, Monk proved that with high probability,
Weil-Petersson random surfaces with genus $g$ and $n=O\left(g^{\alpha}\right)$
cusps Benjamini-Schramm converge to the hyperbolic plane. The regime
$n=O\left(g^{\alpha}\right)$ with $0\leqslant\alpha<\frac{1}{2}$
is studied by Le Masson and Sahlsten in \cite{LS20} where they prove
a quantum ergodicity result for eigenfunctions of the Laplacian.
\begin{rem}
\label{Shen and Wu}Due to a recent work of Shen and Wu \cite{SW22},
the hypothesis $n=O\left(g^{\alpha}\right)$ for $0\leqslant\alpha<\frac{1}{2}$
cannot be relaxed much further. In particular, they prove that if
$n$ satisfies $n\gg g^{\frac{1}{2}+\beta}$ for some $\beta>0$ then
for any $\ep>0$, a Weil-Petersson random surface with genus $g$
and $n$ cusps has a non-zero eigenvalue below $\ep$ with probability
tending to $1$ as $g\to\infty$. They also prove the analogous result
for $g$ fixed and $n\to\infty$.
\end{rem}

\subsection{Other related works}

The first spectral gap result for random surfaces was due to Brooks
and Makover \cite{BM04}. They considered a random closed surface
formed by gluing $2n$ copies of an ideal hyperbolic triangle with
gluing determined by a random trivalent ribbon graph and then applying
a compactification procedure. They proved the existence of a non-explicit
constant $C>0$ such that the first non-zero eigenvalue is greater
than $C$ with probability tending to $1$ as $n\to\infty$.

\subsection*{Spectral theory in the Weil-Petersson model}

The work of Monk in \cite{Mo21a} gives estimates on the density of
Laplace eigenvalues below $\frac{1}{4}$ for Weil-Petersson random
compact surfaces. In \cite{GMST21}, Gilmore, Le Masson, Sahlsten
and Thomas obtain bounds for the $L^{p}$ norms of Laplace eigenfunctions
for Weil-Petersson random compact surfaces.

\subsection*{$\textbf{Random covers}$}

In \cite{MN20}, Magee and Naud introduced a model of a random surface
by picking a base surface $X$ and considering random degree $n$
covers $X_{n}$ of $X$, sampled uniformly. Building on work from
\cite{MP20}, in \cite{MNP20}, Magee, Naud and Puder prove that for
$X$ compact, $X_{n}$ has no new eigenvalues of the Laplacian below
$\frac{3}{16}-\epsilon$ with probability tending towards one as $n\to\infty$.
Following an intermediate result \cite{MN20}, Magee and Naud prove
in \cite{MN21} that for $X$ conformally compact, $X_{n}$ has no
new resonances in any compact set $\mathcal{K}\subset\{s\mid\text{Re}(s)>\frac{\delta}{2}\}$
with probability tending to $1$ as $n\to\infty$, where $\delta$
is the Hausdorff dimension of the limit set of $\Gamma_{X}$. In contrast
to our setting, a conformally compact hyperbolic surface has infinite
area and no cusps.

\subsection*{$\textbf{Selberg's eigenvalue conjecture}$}

Spectral theory of the Laplacian on arithmetic hyperbolic surfaces
has important consequences in Number Theory, see e.g. \cite{Sa03}.
Let $N\geqslant1$, the principal congruence subgroup of $\text{SL}_{2}(\mathbb{Z})$
of level $N$ is 
\[
\Gamma\left(N\right)=\left\{ T\in\text{SL}_{2}(\mathbb{Z})\mid T\equiv I\mod N\right\} .
\]
Consider the quotient $X\left(N\right)\stackrel{\text{def}}{=}\Gamma\left(N\right)\backslash\mathbb{H}$.
For $N>2$, the quotient $X\left(N\right)$ is a finite-area non-compact
hyperbolic surface with the number of cusps $n(N)>0$ given by
\[
n(N)=\frac{N^{2}}{2}\prod_{p\mid N}\left(1-p^{-2}\right),
\]
and genus 
\[
g(N)=1+\frac{\left(N-6\right)N^{2}}{24}\prod_{p\mid N}\left(1-p^{-2}\right),
\]
by e.g. \cite[Theorem 2.12]{Be16}. Letting $\lambda_{1}\left(X\left(N\right)\right)$
denote the first non-zero eigenvalue of the Laplacian on $X\left(N\right)$,
in \cite{Se65} Selberg made the following conjecture.
\begin{conjecture}
For every $N\geqslant1$, \label{Selberg's Conjecture}
\[
\lambda_{1}\left(X(N)\right)\geqslant\frac{1}{4}.
\]
\end{conjecture}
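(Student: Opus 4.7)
The plan is to attack Conjecture~\ref{Selberg's Conjecture} through the automorphic-representation framework and the theory of symmetric-power $L$-functions, which is the setting in which every known partial result toward it has been produced. First I would reinterpret the spectral problem representation-theoretically: a nonzero Laplace eigenvalue $\lambda=\frac{1}{4}+r^{2}$ on $X(N)$ corresponds to a cuspidal automorphic representation $\pi$ of $\mathrm{GL}_{2}(\mathbb{A}_{\mathbf{Q}})$ of conductor dividing $N$ whose archimedean component $\pi_{\infty}$ is an irreducible unitary $(\mathfrak{gl}_{2},\mathrm{O}(2))$-module with spectral parameter $r\in\C$. An exceptional eigenvalue below $\frac{1}{4}$ corresponds exactly to $r\in i\R$ with $0<|r|<\frac{1}{2}$, i.e.\ to $\pi_{\infty}$ being a non-tempered complementary-series representation. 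Selberg's conjecture is thus equivalent to the archimedean Ramanujan--Petersson conjecture for such $\pi$.

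Next I would attempt to rule out non-tempered archimedean components by the symmetric-power method of Langlands, Shahidi, and Kim--Sarnak. If $\pi_{\infty}$ had parameter $\sigma=\frac{1}{2}-|r|\in(0,\frac{1}{2})$, then for each $m\geqslant 1$ the archimedean $L$-factor of $\mathrm{Sym}^{m}\pi$ would exhibit a pole at $s=1-m\sigma$; comparing with the non-vanishing of $L(s,\mathrm{Sym}^{2m}\pi\times\widetilde{\mathrm{Sym}^{2m}\pi})$ at $s=1$ (Jacquet--Shalika) and the trivial Euler-product bound in $\mathrm{Re}(s)>1$ would, under the assumption that $\mathrm{Sym}^{m}\pi$ is cuspidal automorphic on $\mathrm{GL}_{m+1}(\mathbb{A}_{\mathbf{Q}})$ for every $m$, force $\sigma=0$ on letting $m\to\infty$. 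That would be Selberg's conjecture. In the regime where only $\mathrm{Sym}^{2}$, $\mathrm{Sym}^{3}$, and $\mathrm{Sym}^{4}$ are known to be automorphic (Gelbart--Jacquet, Kim--Shahidi, Kim), the same scheme combined with the Luo--Rudnick--Sarnak bounds on the edge of the critical strip yields only the Kim--Sarnak estimate $\lambda_{1}(X(N))\geqslant\frac{1}{4}-\left(\frac{7}{64}\right)^{2}$, not the full conjecture.

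A parallel route is via the Kuznetsov--Petersson trace formula, which expresses Fourier coefficients of Maass cusp forms on the relevant congruence quotient in terms of Kloosterman sums $S(m,n;c)$. Selberg's original bound $\lambda_{1}(X(N))\geqslant\frac{3}{16}$ came from substituting Weil's square-root estimate $|S(m,n;c)|\leqslant\tau(c)\sqrt{(m,n,c)\,c}$; any push to $\frac{1}{4}$ along this path requires extracting nontrivial cancellation in averages $\sum_{c}\frac{S(m,n;c)}{c}\,\phi(c)$ beyond what Weil gives pointwise, for example via Deligne--Katz cohomological inputs or a sieve-theoretic treatment exploiting the multiplicativity of $c\mapsto S(m,n;c)$.

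The main obstacle is that both routes contain a central open problem of the Langlands program. Full symmetric-power functoriality for $\mathrm{GL}_{2}/\mathbf{Q}$ is inaccessible by present converse-theorem and Arthur--Selberg trace-formula methods past $\mathrm{Sym}^{4}$, while unconditional cancellation beyond Weil in the relevant Kloosterman averages would be a major analytic breakthrough. Any honest plan must therefore acknowledge that Conjecture~\ref{Selberg's Conjecture} remains open; the outline above describes the route along which every unconditional lower bound on $\lambda_{1}(X(N))$ has been produced, and the natural direction along which further progress is expected.
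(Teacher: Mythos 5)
This statement is Selberg's eigenvalue conjecture, which the paper states as an open conjecture and does not prove; it only records the known partial results (Selberg's $\frac{3}{16}$ bound and the Kim--Sarnak bound $\frac{975}{4096}=\frac{1}{4}-\left(\frac{7}{64}\right)^{2}$). Your proposal correctly refrains from claiming a proof, and your sketch of the two standard routes --- symmetric-power functoriality following Gelbart--Jacquet, Kim--Shahidi, Kim--Sarnak, and Kloosterman-sum/Weil-bound arguments going back to Selberg --- accurately reflects the state of the art as cited in the paper, so there is nothing to correct.
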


Conjecture \ref{Selberg's Conjecture} remains open however there
have been a number of results in this direction. Selberg proved in
\cite{Se65} that Conjecture \ref{Selberg's Conjecture} holds with
the bound $\frac{3}{16}$. After many intermediate results \cite{GJ78,Iw89,LRS95,Sa95,Iw96,KS02},
the best known result is the following due to Kim and Sarnak \cite{Ki03}.
\begin{thm}[Kim-Sarnak '03]
For every $N\geqslant1$,
\[
\lambda_{1}\left(X(N)\right)\geqslant\frac{975}{4096}.
\]
\end{thm}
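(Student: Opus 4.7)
The approach follows the Langlands-program methodology developed over several decades. The plan is to translate small Laplacian eigenvalues on $X(N)$ into bounds on archimedean Langlands parameters of cuspidal automorphic representations of $\mathrm{GL}_2(\mathbb{A}_{\mathbb{Q}})$, and then apply analytic results for symmetric power $L$-functions to bound those parameters.

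First, I would use the spectral decomposition of $L^2(\mathrm{GL}_2(\mathbb{Q}) \backslash \mathrm{GL}_2(\mathbb{A}_\mathbb{Q}))$ together with strong multiplicity one to associate to each Maass cusp form on $X(N)$ with eigenvalue $\lambda = \tfrac{1}{4} - s^2$, $s \in [0,\tfrac{1}{2}]$, a cuspidal automorphic representation $\pi = \otimes_v \pi_v$ whose archimedean component has Langlands parameter with exponents $\pm s$. Selberg's conjecture is then the Ramanujan conjecture at the archimedean place for $\pi$, and the desired bound $\lambda_1(X(N)) \geq \tfrac{975}{4096}$ is equivalent to $s \leq \tfrac{7}{64}$ for every such $\pi$ arising from a congruence level.

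Next, I would invoke Kim's theorem that $\mathrm{Sym}^4 \pi$ is an automorphic representation of $\mathrm{GL}_5(\mathbb{A}_\mathbb{Q})$, together with the analogous results for $\mathrm{Sym}^2$ (Gelbart--Jacquet) and $\mathrm{Sym}^3$ (Kim--Shahidi). These inputs imply that the Rankin--Selberg $L$-functions $L(s, \mathrm{Sym}^k \pi \times \widetilde{\mathrm{Sym}^k \pi})$ for $k \leq 4$ have the expected meromorphic continuation, functional equation, and simple pole only at $s=1$, together with non-vanishing on $\mathrm{Re}(s) = 1$. Applying the Luo--Rudnick--Sarnak positivity argument to these $L$-functions, in the refined form carried out in the Kim--Sarnak appendix to Kim's paper, one extracts an upper bound on the archimedean local parameter of $\pi$. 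The optimization over the available symmetric-power inputs gives $s \leq \tfrac{7}{64}$, and hence
\[
\lambda_1(X(N)) \;\geq\; \frac{1}{4} - \left(\frac{7}{64}\right)^2 \;=\; \frac{975}{4096}.
\]

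The main obstacle is Kim's automorphy theorem for $\mathrm{Sym}^4$: it rests on the converse theorem for $\mathrm{GL}_5$, delicate analytic properties of Langlands--Shahidi $L$-functions, and base change combined with cyclic-automorphy criteria for $\mathrm{GL}_n$. Once automorphy is in hand, the subsequent extraction of the $\tfrac{7}{64}$ bound through Rankin--Selberg positivity is comparatively mechanical; it is the symmetric-power automorphy input that is the pivotal and still the deepest ingredient in the Kim--Sarnak bound.
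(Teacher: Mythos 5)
The paper does not prove this theorem: it cites it as background, attributing it to Kim's 2003 paper (reference [Ki03]) with the appendix by Kim and Sarnak, and uses it only as motivation for the question about spectral gaps in the regime $n\sim g^{2/3}$. So there is no paper-internal proof to compare against.

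That said, your sketch is a faithful high-level account of the actual Kim--Sarnak argument. The translation of an exceptional Laplace eigenvalue $\lambda=\tfrac14-s^2$ on $X(N)$ into an archimedean Langlands parameter of a cuspidal $\mathrm{GL}_2(\mathbb{A}_\mathbb{Q})$ automorphic representation of conductor dividing a power of $N$, the identification of Selberg's conjecture with the archimedean Ramanujan conjecture, the use of $\mathrm{Sym}^2$ (Gelbart--Jacquet), $\mathrm{Sym}^3$ (Kim--Shahidi), and $\mathrm{Sym}^4$ (Kim) automorphy, and the extraction of the numerical bound via a Luo--Rudnick--Sarnak--type positivity argument applied to Rankin--Selberg $L$-functions of these symmetric powers — these are indeed the pillars of the proof, and $s\leqslant\tfrac{7}{64}$ gives $\lambda_1\geqslant\tfrac14-\bigl(\tfrac{7}{64}\bigr)^2=\tfrac{975}{4096}$ exactly as you compute. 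You correctly flag $\mathrm{Sym}^4$ automorphy as the deepest input. The one caution I would add is that the phrase "comparatively mechanical" undersells the Kim--Sarnak appendix: the positivity argument must be carried out with a carefully chosen test function and a precise accounting of archimedean factors to land on $\tfrac{7}{64}$ rather than a weaker exponent, and replicating that optimization is itself nontrivial. As an outline of a result that is not proved in this paper, your account is accurate.
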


In light of this, it would be interesting to know if Theorem \ref{Main Theorem of paper}
can be extended to the case that the number of cusps satisfies $n\sim g^{\frac{2}{3}}$.
\begin{question}
\label{que:Does-a-Weil-Petersson}Does a Weil-Petersson random surface
with genus $g$ and $n\sim g^{\frac{2}{3}}$ cusps have a uniform
positive spectral gap as $g\to\infty$?
\end{question}

\begin{rem}
Since the preprint version of the current paper first appeared in
July 2021, Question \ref{que:Does-a-Weil-Petersson} has been answered
in the negative by Shen and Wu \cite{SW22}, c.f. Remark \ref{Shen and Wu}.
\end{rem}

\subsection{Structure of the paper }

In the compact case, both proofs of Theorem \ref{Compact case}, in
\cite{WX21} and \cite{LW21}, rely on Selberg's trace formula, e.g.
\cite[9.5.3]{Bu92} to relate the Laplacian eigenvalues of a surface
to its length spectrum. In the non-compact finite-area setting, there
is a version of Selberg's trace formula, e.g. \cite[Theorem 10.2]{Iw02},
but it is more complicated with additional terms related to the absolutely
continuous spectrum. It is not clear to the author how to control
these additional terms. To get around this, in Section \ref{sec:Analytic-preparations}
we prove that if a surface $X\in\mathcal{M}_{g,n}$ has $\lambda_{1}\left(X\right)\leqslant\frac{3}{16}$
then $\lambda_{1}\left(X\right)$ satisfies an inequality (Theorem
\ref{Theorem: Main analytic Theorem }) involving the set of oriented
primitive closed geodesics $\mathcal{P}(X)$, which closely resembles
the form of Selberg's trace formula for compact surfaces, up to well
behaved error terms depending only the topology of the surface. Roughly
we prove that there are strictly positive functions $R$ and $f$
such that
\begin{equation}
R\left(\lambda_{1}\left(X\right),g,n\right)\leqslant\sum_{\gamma\in\mathcal{P}(X)}\sum_{k=1}^{\infty}\frac{l_{\gamma}\left(X\right)}{2\sinh\left(\frac{kl_{\gamma}(x)}{2}\right)}f\left(kl_{\gamma}\left(X\right)\right),\label{eq:Trace inequality introduction}
\end{equation}
where $l_{\gamma}\left(X\right)$ is the length of the geodesic $\gamma\in\mathcal{P}\left(X\right)$.
The proof of Theorem \ref{Theorem: Main analytic Theorem } relies
on results from \cite{Ga02}. The function $R$ is large for small
$\lambda_{1}\left(X\right)$ and bounding the Weil-Petersson expectation
of the right hand side of (\ref{eq:Trace inequality introduction})
will allow us to conclude Theorem \ref{Main Theorem of paper} through
Markov's inequality. 

After we have established Theorem \ref{Theorem: Main analytic Theorem },
we can proceed as in the compact case, making the necessary adaptations
along the way. Section \ref{sec:Geometric-background} introduces
the necessary geometric background on moduli space, the Weil-Petersson
model and Mirzakhani's integration formula. Then in Section \ref{sec:Geometric-estimates}
we bound the Weil-Petersson expectation of the right hand side of
(\ref{eq:Trace inequality introduction}), closely following the approach
taken in \cite{WX21}. Finally, in Section \ref{sec:Proof-of-Theorem}
we apply Markov's inequality to bound the probability that $X\in\mathcal{M}_{g,n}$
has a small eigenvalue to conclude the proof of Theorem \ref{Main Theorem of paper}.

In order to deduce Theorem \ref{Main Theorem of paper}, we need to
be able to estimate expressions involving the Weil-Petersson volumes
$V_{g,n}$ where $n$ grows with $g$, which is the focus of the Appendix
\ref{sec:Volume-estimates}. 

\subsection{Notation}

For real valued functions $f,h$ depending on a parameter $g$ we
write $f\ll h$ or $f=O\left(h\right)$ if there exists $C,G>0$ such
that $|f(g)|\leqslant Ch(g)$ for all $g>G$. We add subscripts to
the $\ll$ sign if the constant $C,G$ depend on another variable.
E.g. we write $f\ll_{\epsilon}h$ if exists $C=C(\ep),G=G(\ep)$ such
that $|f(g)|\leqslant Ch(g)\text{ for all }g>G$. We write $f\sim h$
if $f\ll h$ and $h\ll f$. We write $(\underline{0}_{j},a_{1},\dots,a_{k})$
to denote $(0,\dots,0,a_{1},\dots,a_{k})\in\mathbb{R}^{j+k}$ and
we write $\mathbb{R}_{\geqslant0}$ (resp. $\mathbb{Z}_{\geqslant0}$)
to denote the non-negative real numbers (resp. integers). 

\subsection*{Acknowledgments}

We thank Michael Magee and Joe Thomas for discussions about this work.
We thank the anonymous referees for their careful reading and comments
that have improved the paper.

\section{Analytic preparations\label{sec:Analytic-preparations}}

In this section we develop the necessary analytic machinery to prove
Theorem \ref{Main Theorem of paper}. We prove a version of Selberg's
trace formula, using a pre-trace inequality in place of the usual
pre-trace formula.

In Section \ref{subsec:Test-functions} we exhibit a family of test
functions $f_{T}$ where $T=4\log g$, and $f_{T}$ is a non-negative,
even, smooth function with support exactly $\left(-T,T\right)$ whose
Fourier transform $\hat{f}_{T}$ is non-negative on $\mathbb{R}\cup i\mathbb{R}$
with $\hat{f}_{T}\left(\frac{i}{2}\right)=O\left(g^{2}\right)$. The
family of test functions $f_{T}$ is defined by (\ref{Definition of g_T})
with $T=4\log g$.

The goal of this section is to prove the following. 
\begin{thm}
\label{Theorem: Main analytic Theorem }For $g\geqslant2$, let $f_{T}$
be the test function defined by (\ref{Definition of g_T}) with $T=4\log g$.
For any $\varepsilon>0$, there exists a constant $C(\varepsilon)>0$
such that for any non-compact finite-area surface $X$ with genus
$g$, $n=o\left(g^{\frac{1}{2}}\right)$ cusps and $\lambda_{1}(X)\leqslant\frac{3}{16}$,
\begin{equation}
C(\varepsilon)\log\left(g\right)g^{4(1-\varepsilon)\sqrt{\frac{1}{4}-\lambda_{1}(X)}}\leqslant\sum_{\gamma\in\mathcal{P}(X)}\sum_{k=1}^{\infty}\frac{l_{\gamma}\left(X\right)}{2\sinh\left(\frac{kl_{\gamma}(x)}{2}\right)}f_{T}\left(kl_{\gamma}\left(X\right)\right)-\hat{f}_{T}\left(\frac{i}{2}\right)+O\left(ng\right).\label{Main analytic theorem}
\end{equation}
\end{thm}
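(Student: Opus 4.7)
The plan is to prove a Selberg-type pre-trace inequality that bypasses the continuous-spectrum and scattering-determinant terms appearing in the non-compact Selberg trace formula. Following Gamburd~\cite{Ga02}, I would build the automorphic kernel $K(z,w)=\sum_{\gamma\in\Gamma_X}k(z,\gamma w)$ with point-pair invariant $k$ having spherical transform $\hat{f}_T$. The spectral expansion of $K(z,z)$ is a sum $\sum_j\hat{f}_T(r_j)|\phi_j(z)|^2$ over the discrete spectrum plus a non-negative Eisenstein integral (non-negative because $\hat{f}_T\geq 0$ on $\R$). Integrating over a (suitably truncated) fundamental domain then yields the inequality
\[
\sum_{\lambda_j\leq 1/4}\hat{f}_T(r_j)\;\leq\;(\text{identity})+(\text{hyperbolic})+(\text{parabolic}),
\]
where $r_j$ is determined by $\lambda_j=1/4-r_j^2\in[0,1/4]$ and the hyperbolic term is exactly the geodesic sum in the theorem.

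First, keeping only the $\lambda_0=0$ and $\lambda_1$ contributions on the left and using $\hat{f}_T\geq 0$ on $i\R$, I would obtain $\hat{f}_T(i/2)+\hat{f}_T(it_1)\leq(\text{geometric side})$, where $t_1\eqdf\sqrt{1/4-\lambda_1(X)}\in[1/4,1/2]$ since $\lambda_1(X)\leq 3/16$. Next I would estimate each geometric piece. The identity contribution $\tfrac{\mathrm{Area}(X)}{4\pi}\int_\R\hat{f}_T(r)\,r\tanh(\pi r)\,dr$ is $O(g)$ by $\mathrm{Area}(X)=2\pi(2g-2+n)$ and direct bounds on $\hat{f}_T$ from Section~\ref{subsec:Test-functions}. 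The parabolic contribution from each of the $n$ cusps arises from horocyclic sums $\sum_{m\neq 0}k(z,z+m)$ and is $O(g)$ per cusp by Poisson summation, giving $O(ng)$ in total. The hyperbolic term is precisely the geodesic sum. Rearranging gives
\[
\hat{f}_T(it_1)\;\leq\;\sum_{\gamma\in\mathcal{P}(X)}\sum_{k=1}^\infty\frac{l_\gamma(X)}{2\sinh(kl_\gamma(X)/2)}f_T(kl_\gamma(X))-\hat{f}_T(i/2)+O(ng).
\]
The theorem then follows from the lower bound $\hat{f}_T(it_1)\geq C(\ep)\log(g)\,g^{4(1-\ep)t_1}$, which I would derive from the explicit construction of $f_T$: since $f_T$ is a smooth non-negative bump supported on $[-T,T]$ with $T=4\log g$, the integral $\hat{f}_T(it_1)=\int_{-T}^T f_T(x)\cosh(xt_1)\,dx$ is essentially $e^{Tt_1}=g^{4t_1}$ up to a polylogarithmic normalization factor, with the $\ep$-loss absorbing the smoothing of the bump near the endpoints.

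The main obstacle I expect is the careful control of the parabolic and identity contributions in the pre-trace inequality. The naive integral $\int_F K(z,z)\,d\mu(z)$ diverges on a non-compact fundamental domain because of contributions from high up in the cusps, so the argument must either truncate $F$ at height $Y\gg e^T$ in each cusp (using Maass--Selberg relations to discard the Eisenstein-truncation defect) or work with the pre-trace expansion pointwise and integrate carefully. The hypothesis $n=o(g^{1/2})$ is what guarantees that the parabolic error $O(ng)$ is dominated by $\hat{f}_T(i/2)\sim g^2$, which is essential for the inequality to be non-trivial. A secondary technical point is obtaining the parabolic bound uniformly in the location of cusps, since we make no assumption on the geometry of $X$ beyond its topological invariants $g$ and $n$.
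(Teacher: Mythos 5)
Your overall framework is the same as the paper's: invoke Gamburd's pre-trace inequality (\cite[Proposition 5.2]{Ga02}), integrate over a truncated region, separate the spectral and geometric sides, and close with the lower bound $\hat{f}_T(it_1)\gg T e^{T(1-\ep)t_1}$ from \cite[Lemma 2.4]{MNP20}. However, there is a genuine gap at the step you flag as the ``main obstacle,'' and you do not resolve it.

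The crucial missing ingredient is the eigenfunction mass bound on the compact core. The paper does \emph{not} truncate each cusp at a height $Y\gg e^T$; it integrates the pre-trace inequality over the low truncation $D=D(2)$ (the fundamental domain with all cusp neighborhoods above the length-$\tfrac12$ horocycle removed). With such a low truncation, there is an a priori risk of losing the entire $\lambda_1$ contribution on the spectral side if the eigenfunction $u_1$ concentrates in the cusps. The paper's argument works only because of \cite[Lemma 4.1]{Ga02} (Lemma \ref{lem:Collarlemmacusps}): for any $\kappa>0$ there is $c(\kappa)>0$, \emph{independent of $X$}, such that every $L^2$-normalized eigenfunction $u_j$ with $\lambda_j\le \tfrac14-\kappa$ satisfies $\int_D|u_j|^2\,d\mu\ge c(\kappa)$. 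This is what converts the pointwise inequality into a usable integrated inequality with a definite contribution $c\,\hat{f}_T(r(\lambda_1))$. Your proposal never names or invokes this lemma, and ``integrate carefully'' does not substitute for it.

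Your alternative route --- truncating at $Y\gg e^T$ and invoking Maass--Selberg to control the Eisenstein-truncation defect --- is precisely the territory the paper deliberately avoids, because the scattering matrix terms of a general finite-area surface are not controlled (this is stated in the introduction as the reason for not using the full non-compact trace formula). Carrying out that route would need substantive new estimates on the scattering determinant uniform over $\mathcal{M}_{g,n}$, which neither you nor the paper supplies. A secondary imprecision: the parabolic integral over $D$ contributes $O(\log g)$ per cusp (from $\sum_{l\le O(e^{T/2})} 1/l$), not $O(g)$; the $O(ng)$ error in the statement actually comes from the $\lambda_0$ term, namely $\left(\tfrac{\mathrm{Vol}(D)}{\mathrm{Vol}(X)}-1\right)\hat{f}_T(\tfrac i2)=O(\tfrac ng)\cdot O(g^2)=O(ng)$. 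This does not break your conclusion, but it shows the accounting was not carried through.
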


The left hand side depends on $\lambda_{1}(X)$ and the right hand
side depends on the the length spectrum of $X$. 
\begin{rem}
Given $\kappa>0$, we could have stated Theorem \ref{Theorem: Main analytic Theorem }
with the hypothesis $\lambda_{1}(X)\leqslant\frac{1}{4}-\kappa$,
(the statement is almost the same except the constant $C(\varepsilon)$
will also depend on $\kappa$) however our geometric estimates (Section
\ref{sec:Geometric-estimates}) are not strong enough to prove a spectral
gap larger than $\frac{3}{16}$. We therefore state Theorem \ref{Theorem: Main analytic Theorem }
with the hypothesis $\lambda_{1}(X)\leqslant\frac{3}{16}$ to simplify
notation.
\end{rem}

\subsection{The Laplacian on hyperbolic surfaces}

Consider the upper half plane
\[
\mathbb{H}=\{x+iy\mid x,y\in\mathbb{R},y>0\},
\]
with metric given by 
\[
\frac{dx^{2}+dy^{2}}{y^{2}}.
\]
The orientation preserving isometry group of $\mathbb{H}$ is $\text{PSL}_{2}(\mathbb{R})$,
acting via Möbius transformations. The Laplacian on $\mathbb{H}$,
denoted $\Delta_{\mathcal{\mathbb{H}}}$, is given by 
\[
\Delta_{\mathcal{\mathbb{H}}}=-y^{2}\left(\frac{\partial^{2}}{\partial x^{2}}+\frac{\partial^{2}}{\partial y^{2}}\right).
\]
 A non-compact finite-area hyperbolic surface can be realized as a
quotient $\Gamma_{X}\backslash\mathbb{H}$ where $\Gamma_{X}$ is
a finitely generated discrete free subgroup of $\text{PSL}_{2}(\mathbb{R})$,
containing parabolic elements (elements with trace $\pm2$). $\Delta_{\mathcal{\mathbb{H}}}$
is invariant under the action of $\text{PSL}_{2}(\mathbb{R})$ and
descends to an operator on $C_{c}^{\infty}(X).$ It extends uniquely
to a non-negative unbounded self-adjoint operator on $L^{2}(X).$
We let $\Delta_{X}$ denote the Laplacian on $X$ and write $\spec\left(\Delta_{X}\right)$
for the spectrum of $\Delta_{X}$. We write $\lambda_{j}\left(X\right)$
to denote the $j$th smallest non-zero eigenvalue of $\Delta_{X}$
if it exists. 

A parabolic cylinder is the quotient of $\mathbb{H}$ by a parabolic
cyclic group. We define a cusp to be the small end of a parabolic
cylinder, with boundary the unique closed horocycle of length $1$.
By \cite[Lemma 4.4.6]{Bu92}, in any finite-area hyperbolic surface,
cusps must be pairwise disjoint. Throughout Section \ref{sec:Analytic-preparations}
we let $X=\Gamma_{X}\backslash\mathbb{H}$ be a fixed non-compact
finite-area hyperbolic surface with genus $g$ and $n=o\left(g^{\frac{1}{2}}\right)$
cusps and, for the sake of argument, $\lambda_{1}(X)\leqslant\frac{3}{16}$.

\subsection{Fundamental domains}

In this subsection we introduce a decomposition of the fundamental
domain which we will need in the proof of Theorem \ref{Theorem: Main analytic Theorem }.
We shall closely follow \cite[Section 2.2]{Iw02}, and refer the reader
there for all of the notions introduced in this subsection.

We write $\mathcal{F}$ to denote a Dirichlet fundamental domain for
$\Gamma_{X}$. Since $\mathcal{F}$ is a non-compact polygon, it has
some of its vertices on $\mathbb{R}\cup\infty$ in $\mathbb{H\cup\partial}\mathbb{H}$.
We call such a vertex a cuspidal vertex. By e.g. \cite[Proposition 2.4]{Iw02},
we can ensure that the cuspidal vertices are distinct modulo $\Gamma_{X}$.
The sides of $\mathcal{F}$ can be arranged in pairs so that the side
pairing motions generate $\Gamma_{X}$. The two sides of $\mathcal{F}$
meeting at a cuspidal vertex have to be pairs since the cuspidal vertices
are distinct modulo $\Gamma_{X}$. The side-pairing motion has to
fix the vertex and is therefore a parabolic element of $\Gamma_{X}$.
This gives rise to a cusp in the quotient $\Gamma_{X}\backslash\mathbb{H}$
and each cuspidal vertex corresponds to a unique cusp in this way.
We label the cuspidal vertices by $\mathfrak{a}_{1},...,\mathfrak{a}_{n}$.
We denote the stabilizer subgroup of the vertex $\mathfrak{a}_{i}$
by
\[
\Gamma_{\mathfrak{a}_{i}}\stackrel{\text{def}}{=}\{\gamma\in\Gamma_{X}\mid\gamma\mathfrak{a}_{i}=\mathfrak{a}_{i}\}.
\]
Each $\Gamma_{\mathfrak{a}_{i}}$ is an infinite cyclic group generated
by the parabolic element $\gamma_{\mathfrak{a}_{i}}$, which is the
side-pairing motion at the vertex $\mathfrak{a}_{i}$. There exists
$\sigma_{\mathfrak{a}_{i}}\in\text{SL}_{2}\left(\mathbb{R}\right)$
such that 
\[
\sigma_{\mathfrak{a}_{i}}^{-1}\gamma_{\mathfrak{a}_{i}}\sigma_{\mathfrak{a}_{i}}=\begin{pmatrix}1 & 1\\
0 & 1
\end{pmatrix}.
\]
 $\sigma_{\mathfrak{a}_{i}}$ is determined up to right multiplication
by a translation. We choose $\sigma_{\mathfrak{a}_{i}}$ so that for
each $l\geqslant1$, the semi-strip 
\[
P\left(l\right)\stackrel{\text{def}}{=}\{z\in\mathbb{H}\mid0<x<1,y\geqslant l\},
\]
is mapped into $\mathcal{F}$ by $\sigma_{\mathfrak{a}_{i}}$. 
\begin{defn}
\label{def:compact part}For $i=1,\dots,n$ and $l\geqslant1$, we
define
\begin{align*}
D_{\mathfrak{a}_{i}}\left(l\right) & \stackrel{\text{def}}{=}\sigma_{\mathfrak{a}_{i}}P\left(l\right),
\end{align*}
and 
\[
D\left(l\right)\stackrel{\text{def}}{=}\mathcal{F}\backslash\bigsqcup_{i=1}^{n}D_{\mathfrak{a}_{i}}\left(l\right).
\]

$D_{\mathfrak{a}_{i}}\left(l\right)$ is the part of the fundamental
domain in the $i$th cusp bounded below by the length $\frac{1}{l}$
horocycle and $D\left(l\right)$ is a pre-compact region of $\mathcal{F}$.
By e.g. \cite[Lemma 4.4.6]{Bu92}, the cusps $D_{\mathfrak{a_{i}}}\left(1\right)$
are pairwise disjoint and since $l\geqslant1$, $D_{\mathfrak{a}_{i}}\left(l\right)\cap D_{\mathfrak{a}_{j}}\left(l\right)=\emptyset$
for $i\neq j$ and we can partition the fundamental domain as 
\[
\mathcal{F=}D\left(l\right)\sqcup\bigsqcup_{i=1}^{n}D_{\mathfrak{a}_{i}}\left(l\right).
\]
\end{defn}

\subsection{Test functions\label{subsec:Test-functions}}

In this subsection we introduce the family of test functions used
in Theorem \ref{Theorem: Main analytic Theorem }.
\begin{prop}
\label{prop:g_1}There exists an $f_{1}\in C_{c}^{\infty}\left(\mathbb{R}\right)$
with 
\begin{enumerate}
\item $\textup{Supp}(f_{1})=(-1,1)$.
\item $f_{1}$ is non-negative and even.
\item The Fourier transform $\hat{f_{1}}$ satisfies $\hat{f_{1}}(\xi)\geqslant0$
for $\xi\in\mathbb{R}\cup i\mathbb{R}$.
\item $f_{1}$ is non-increasing in $[0,1).$
\end{enumerate}
\end{prop}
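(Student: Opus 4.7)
My plan is to construct $f_1$ as the self-convolution of a suitable smooth bump. Specifically, fix $h \in C_c^{\infty}(\R)$ that is non-negative, even, with support equal to $[-\tfrac{1}{2},\tfrac{1}{2}]$ and strictly decreasing on $[0,\tfrac{1}{2})$; a concrete choice is $h(x) = \exp\bigl(-1/(\tfrac{1}{4}-x^2)\bigr)$ on $(-\tfrac{1}{2},\tfrac{1}{2})$ and $0$ elsewhere, whose derivative on $(0,\tfrac{1}{2})$ is immediately checked to be negative. Then set $f_1 := h \ast h$.

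Properties (1) and (2) follow essentially automatically: convolution of two smooth, compactly supported, even, non-negative functions is smooth, compactly supported, even, and non-negative. Since $h$ is strictly positive on $(-\tfrac{1}{2},\tfrac{1}{2})$, a short direct check shows $(h\ast h)(x) > 0$ for all $|x|<1$, so the support of $f_1$ is exactly $[-1,1]$ (what the statement writes as $(-1,1)$).

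For property (3), I would use $\widehat{f_1} = \widehat{h}^{\,2}$. Since $h$ is real and even, $\widehat{h}(\xi)\in\R$ for $\xi\in\R$, hence $\widehat{f_1}(\xi) = \widehat{h}(\xi)^2 \geq 0$ on $\R$. Positivity on $i\R$ is immediate from (2): for $\xi = it$,
\[
\widehat{f_1}(it) \;=\; \int_{\R} f_1(x)\,e^{tx}\,dx \;\geq\; 0,
\]
since $f_1\geq 0$. Thus positivity on $\R\cup i\R$ is a clean consequence of the self-convolution structure combined with real-evenness and non-negativity.

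The only delicate point is (4). Here I would invoke the fact that the convolution of two symmetric decreasing functions on $\R$ is again symmetric decreasing (a special case of the Riesz rearrangement inequality). Explicitly, using the layer-cake identity $h = \int_0^{\|h\|_\infty} \mathbf{1}_{\{h>s\}}\,ds$ and the fact that each level set $\{h>s\}$ is a symmetric interval $[-a_s,a_s]$, Fubini gives
\[
(h\ast h)(x) \;=\; \int_0^{\|h\|_\infty}\!\!\int_0^{\|h\|_\infty} \bigl(\mathbf{1}_{[-a_s,a_s]}\ast \mathbf{1}_{[-a_t,a_t]}\bigr)(x)\,ds\,dt.
\]
Each inner convolution is an explicit trapezoid, even and non-increasing on $[0,\infty)$, and these properties survive the outer integration. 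This monotonicity step is the only part of the proof that requires any genuine inequality; everything else is algebraic in the self-convolution construction, which is the main reason this is the natural family to choose.
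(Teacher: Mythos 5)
Your proposal is correct, and for parts (1)--(3) it is essentially the same argument as the paper: the paper also takes $f_1 = \psi_0 \ast \psi_0$ for an even, non-negative, smooth $\psi_0$ with $\mathrm{Supp}(\psi_0) = (-\tfrac12,\tfrac12)$ that is non-increasing on $[0,\tfrac12)$, and obtains (1)--(3) exactly as you do (citing \cite[Section 2.2]{MNP20}, which relies on $\widehat{f_1}=\widehat{\psi_0}^2$ on $\R$ and $\widehat{f_1}(it)=\int f_1(x)e^{tx}\,dx\geqslant 0$ on $i\R$). Where you genuinely diverge is the proof of (4). The paper differentiates $f_1$ under the integral sign and does a hands-on sign analysis, splitting into the ranges $0<x<\tfrac12$ and $\tfrac12\leqslant x<1$, writing in the first case
\[
f_1'(x)=\int_0^{\frac12-x}\bigl(\psi_0(x-z)-\psi_0(x+z)\bigr)\psi_0'(z)\,dz+\int_{\frac12-x}^{\frac12}\psi_0(x-z)\psi_0'(z)\,dz,
\]
and checking each integrand is non-positive. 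You instead use the layer-cake identity to reduce the convolution to a two-parameter integral of convolutions of indicators of symmetric intervals, each of which is an explicit trapezoid and hence even and non-increasing on $[0,\infty)$, and note these pointwise properties persist under the outer integration; this is the standard proof that the convolution of two symmetric-decreasing functions is symmetric-decreasing. Both routes are valid and reach the same conclusion. Your argument is the more conceptual and robust (it needs nothing about $\psi_0$ beyond continuity, evenness, and monotonicity on $[0,\tfrac12)$, and it avoids the case split); the paper's direct computation is more elementary and self-contained, staying within calculus. One small point: as you note, for a continuous compactly supported function the support is closed, so $\mathrm{Supp}(f_1)=[-1,1]$; the paper's $(-1,1)$ should be read as the set $\{f_1\neq 0\}$, which does equal $(-1,1)$.
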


Proposition \ref{prop:g_1} is based on \cite[Section 2.2]{MNP20},
with the extra condition $(4)$ for convenience later on. 
\begin{proof}[Proof of Proposition \ref{prop:g_1}]
Let $\psi_{0}$ be an even, $C^{\infty}$, real valued non-negative
function whose support is exactly $(-\frac{1}{2},\frac{1}{2})$ which
is non-increasing in $[0,\frac{1}{2}).$ Let 
\[
f_{1}(x)\stackrel{\text{def}}{=}\int_{\mathbb{R}}\psi_{0}(x-t)\psi_{0}(t)dt.
\]
It is proved in \cite[Section 2.2]{MNP20} that $f_{1}$ satisfies
$(1)-(3)$. It remains to check $(4)$. Since $f_{1}$ is even we
have $f_{1}'(0)=0$. If $0<x<\frac{1}{2}$, one can calculate that
\begin{align*}
f_{1}'(x) & =\int_{0}^{\frac{1}{2}-x}\left(\psi_{0}(x-z)-\psi_{0}(x+z)\right)\psi_{0}'(z)dz+\int_{\frac{1}{2}-x}^{\frac{1}{2}}\psi_{0}(x-z)\psi_{0}'(z)dz.
\end{align*}
Since $\psi_{0}$ is positive, even and non-increasing in $[0,\frac{1}{2})$,
we have $\psi_{0}'(z)\leqslant0$ and $\psi_{0}(x-z)-\psi_{0}(x+z)\geqslant0$
for all $0\leqslant z\leqslant\frac{1}{2}-x$, so the first integrand
is non-positive. The second integrand is also non-positive since $\psi_{0}$
is non-negative. Therefore $f_{1}'(x)\leqslant0$ in $[0,\frac{1}{2}).$
If $\frac{1}{2}\leqslant x<1$, then 
\[
f_{1}'(x)=\int_{x-\frac{1}{2}}^{\frac{1}{2}}\psi_{0}'(t)\psi_{0}(x-t)dt\leqslant0,
\]
and $f_{1}$ is non-increasing in $[0,1).$ 
\end{proof}
From here on in, we fix such a function $f_{1}$. For $T>1$ we define
\begin{equation}
f_{T}(x)\stackrel{\text{def}}{=}f_{1}\left(\frac{x}{T}\right).\label{Definition of g_T}
\end{equation}
Then by Proposition \ref{prop:g_1}, for each $T>1$, $f_{T}$ is
a non-negative, even, smooth function with support exactly $(-T,T)$
whose Fourier transform $\hat{f}_{T}$ is non-negative on $\mathbb{R}\cup i\mathbb{R}$.
We also have that $f_{T}$ is non-increasing in $[0,T)$.

Let $k_{T}$ denote the inverse Abel transform of $f_{T}$, i.e.
\begin{equation}
k_{T}\left(\rho\right)\stackrel{\text{def}}{=}\frac{-1}{\sqrt{2}\pi}\int_{\rho}^{\infty}\frac{f_{T}'(u)}{\sqrt{\cosh u-\cosh\rho}}du,\label{Definition of k_0}
\end{equation}
which is well defined since $f_{T}$ is compactly supported. We see
that $k_{T}$ is smooth, $\text{Supp}\left(k_{T}\right)\subseteq[0,T)$
and since $f_{T}$ is non-increasing in $[0,T)$, $k_{T}$ is non-negative. 

We now have a fixed family of test functions $f_{T}$ for $T>1$.
We conclude this subsection by stating a lower bound on $\hat{f}_{T}$
in $i\mathbb{R}$ from \cite{MNP20}. 
\begin{lem}[{\cite[Lemma 2.4]{MNP20}}]
\label{lem:Upper bound on g_T}For any $\ep>0$ there exists a constant
$C_{\ep}>0$ such that for all $t\in\mathbb{R}_{\geqslant0}$ and
for all $T>1$ the Fourier transform $\hat{f}_{T}$ satisfies
\begin{equation}
\hat{f}_{T}(it)\geqslant TC_{\ep}e^{T(1-\ep)t}.\label{eq:5}
\end{equation}
\end{lem}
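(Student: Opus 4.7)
The plan is to reduce the assertion to a single exponential lower bound on $\hat{f}_{1}$ along the imaginary axis, and then exploit the convolution structure $f_{1}=\psi_{0}*\psi_{0}$ introduced in the proof of Proposition \ref{prop:g_1}. The dilation $f_{T}(x)=f_{1}(x/T)$ gives $\hat{f}_{T}(\xi)=T\hat{f}_{1}(T\xi)$, so $\hat{f}_{T}(it)=T\hat{f}_{1}(iTt)$. Setting $u=Tt$, the inequality \eqref{eq:5} is equivalent to producing a constant $C_{\ep}>0$ such that
\[
\hat{f}_{1}(iu)\geq C_{\ep}\,e^{(1-\ep)u}\quad\text{for all }u\geq 0,
\]
since as $T>1$ and $t\geq 0$ vary, $u=Tt$ covers all of $[0,\infty)$ and the prefactor $T$ cancels. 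This reduction removes every trace of $T$ from the problem; from now on no scaling is involved.

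Because $f_{1}=\psi_{0}*\psi_{0}$ with $\psi_{0}$ even, real, non-negative, smooth, and supported exactly on $(-\frac{1}{2},\frac{1}{2})$, one has $\hat{f}_{1}(iu)=\hat{\psi}_{0}(iu)^{2}$ for all $u\in\mathbb{R}$ (direct Fubini computation), where
\[
\hat{\psi}_{0}(iu)=\int_{-1/2}^{1/2}\psi_{0}(x)\,e^{ux}\,dx
\]
is real and strictly positive. To extract the growth rate $e^{(1-\ep)u}$ I concentrate the integral near the right endpoint of the support. Fix $\delta\in(0,\ep/2)$; since $\psi_{0}$ is continuous and strictly positive on $(-\frac{1}{2},\frac{1}{2})$, there exists $m_{\delta}>0$ with $\psi_{0}(x)\geq m_{\delta}$ on $[\frac{1}{2}-2\delta,\frac{1}{2}-\delta]$. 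Restricting the integral to this subinterval yields, for $u>0$,
\[
\hat{\psi}_{0}(iu)\geq m_{\delta}\int_{1/2-2\delta}^{1/2-\delta}e^{ux}\,dx=\frac{m_{\delta}\,e^{u(1/2-2\delta)}}{u}\bigl(e^{u\delta}-1\bigr),
\]
which is at least $\tfrac{m_{\delta}}{2u}e^{u(1/2-\delta)}$ for $u$ larger than some threshold $u_{0}=u_{0}(\delta)$. Squaring gives
\[
\hat{f}_{1}(iu)\geq\frac{m_{\delta}^{2}}{4u^{2}}\,e^{u(1-2\delta)}\quad\text{for }u\geq u_{0}.
\]

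Since $1-2\delta>1-\ep$, the ratio $\hat{f}_{1}(iu)/e^{(1-\ep)u}$ tends to $+\infty$ as $u\to\infty$, so it is bounded below by a positive constant on $[u_{1},\infty)$ for some $u_{1}\geq u_{0}$. On the compact interval $[0,u_{1}]$ the same ratio is continuous and strictly positive (because $\hat{\psi}_{0}(iu)>0$ throughout $\mathbb{R}$), hence bounded below there as well. Taking $C_{\ep}$ to be the minimum of the two bounds produces the uniform estimate. The only genuine subtlety is bookkeeping around the $1/u^{2}$ prefactor and the regime of small $u$: the $\ep$-buffer in the exponent absorbs the polynomial loss in the tail, while continuity and strict positivity of $\hat{f}_{1}$ on $i\mathbb{R}$ handle the bounded range. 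In both parts the resulting constant depends only on the fixed function $\psi_{0}$ and on $\ep$, and is therefore manifestly independent of $T$ and $t$, which is precisely what the statement requires.
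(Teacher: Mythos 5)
Your proof is correct. Note, though, that the paper does not prove this lemma at all: it simply cites \cite[Lemma 2.4]{MNP20}, remarking that the argument there works for any function satisfying properties (1)--(3) of Proposition \ref{prop:g_1}. Your route is genuinely different from (and heavier than) the standard one. You scale down to $\hat{f}_{1}$, use the convolution-square structure $\hat{f}_{1}(iu)=\hat{\psi}_{0}(iu)^{2}$, concentrate $\hat{\psi}_{0}(iu)$ near the endpoint of the support of $\psi_{0}$, and patch the bounded range $[0,u_{1}]$ by compactness; this is sound (the strict positivity of $\psi_{0}$ on $(-\tfrac12,\tfrac12)$ that you invoke does hold here, thanks to the support condition together with monotonicity on $[0,\tfrac12)$), but it exploits the specific construction of $f_{1}$ rather than just properties (1)--(3). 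The argument behind the cited lemma is a one-liner needing only non-negativity, evenness and $\mathrm{Supp}(f_{T})=(-T,T)$: for $t\geqslant0$,
\[
\hat{f}_{T}(it)=\int_{\mathbb{R}}f_{T}(x)e^{tx}\,dx\;\geqslant\;\int_{(1-\ep)T}^{T}f_{T}(x)e^{tx}\,dx\;\geqslant\;e^{(1-\ep)Tt}\,T\int_{1-\ep}^{1}f_{1}(y)\,dy,
\]
and $C_{\ep}=\int_{1-\ep}^{1}f_{1}>0$ because $f_{1}\geqslant0$ has support exactly $(-1,1)$. What your approach buys is a self-contained proof with an explicit mechanism (growth of $\hat{\psi}_{0}$ along $i\mathbb{R}$); what the simpler approach buys is brevity, no compactness step, no polynomial-loss bookkeeping, and applicability to any test function with properties (1)--(3), which is exactly the flexibility the paper's citation relies on.
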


\cite[Lemma 2.4]{MNP20} applies for any function satisfying properties
$(1)-(3)$ from Proposition \ref{prop:g_1} so it also applies here.
Lemma \ref{lem:Upper bound on g_T} tells us that small values of
$\lambda_{1}$ imply large values of $\hat{f}_{T}\left(i\sqrt{\frac{1}{4}-\lambda_{1}}\right)$.

\subsection{Eigenfunction estimates}

Now we have a family of test functions, we proceed with the proof
of Theorem \ref{Theorem: Main analytic Theorem }. For $z,w\in\mathbb{H}$,
$T>1$ we define
\[
k_{T}(z,w)\stackrel{\text{def}}{=}k_{T}\left(d(z,w)\right).
\]
Let $r:[0,\infty)\to\text{\ensuremath{\mathbb{C}}}$ be the function
given by
\[
r(x)=\begin{cases}
i\sqrt{\frac{1}{4}-x} & \text{if \ensuremath{0\leqslant x\leqslant\frac{1}{4}}, }\\
\sqrt{x-\frac{1}{4}} & \text{if \ensuremath{x>\frac{1}{4}}. }
\end{cases}
\]
Let $u_{j}\in L^{2}(X)$ denote the normalized eigenfunction of the
Laplacian on $X$ corresponding to the eigenvalue $\lambda_{j}$.
Our starting point is the following.
\begin{lem}[{Pre-trace inequality \cite[Proposition 5.2]{Ga02}}]
\label{Pre-trace inequality}For all $T>1$ and $z\in\H$ we have
that 
\begin{equation}
\sum_{\substack{j:\text{}\lambda_{j}<\frac{1}{4}}
}\hat{f}_{T}\left(r\left(\lambda_{j}\right)\right)|u_{j}(z)|^{2}\leqslant\sum_{\gamma\in\Gamma_{X}}k_{T}\left(z,\gamma z\right).\label{eq:Pre-trace inequality}
\end{equation}
\end{lem}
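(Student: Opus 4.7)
The plan is to construct the $\Gamma_X$-automorphic kernel associated to $k_T$, expand its diagonal via the $L^2(X)$ spectral decomposition, and then discard the non-negative continuous-spectrum and embedded-eigenvalue contributions. Set
$$K_T(z,w) \stackrel{\text{def}}{=} \sum_{\gamma \in \Gamma_X} k_T\bigl(d(z, \gamma w)\bigr).$$
Since $k_T$ is supported in $[0,T)$ and $\Gamma_X$ is discrete, the sum is locally finite, and $K_T$ descends to a smooth symmetric kernel on $X \times X$. The first step is to recall that because $k_T$ is the inverse Abel transform of $f_T$, its Selberg/Harish-Chandra transform is precisely $\hat{f}_T$: for any eigenfunction $\phi$ of $\Delta_{\mathbb{H}}$ with spectral parameter $r$ (so $\Delta_{\mathbb{H}} \phi = \bigl(\tfrac{1}{4} + r^2\bigr)\phi$),
$$\int_{\mathbb{H}} k_T\bigl(d(z,w)\bigr) \phi(w) \, d\mu(w) = \hat{f}_T(r)\phi(z).$$
Unfolding the sum defining $K_T$ against an $L^2$ eigenfunction $u_j$ of $\Delta_X$ then shows that $u_j$ is an eigenfunction of the integral operator on $X$ with kernel $K_T$, with eigenvalue $\hat{f}_T\bigl(r(\lambda_j)\bigr)$.

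The second step is the spectral expansion of $K_T(z,z)$ on the finite-area non-compact quotient. The Roelcke-Selberg decomposition of $L^2(X)$ consists of the discrete part spanned by $\{u_j\}$ (the constant, the small eigenfunctions with $\lambda_j < \tfrac{1}{4}$, and possibly embedded eigenfunctions with $\lambda_j \geq \tfrac{1}{4}$), together with the continuous part realized via Eisenstein series $E_i\bigl(\cdot, \tfrac{1}{2} + it\bigr)$ attached to each cusp $\mathfrak{a}_i$. The standard derivation yields the pointwise identity
$$K_T(z,z) = \sum_j \hat{f}_T\bigl(r(\lambda_j)\bigr) |u_j(z)|^2 + \frac{1}{4\pi} \sum_{i=1}^{n} \int_{-\infty}^{\infty} \hat{f}_T(t) \bigl|E_i\bigl(z, \tfrac{1}{2} + it\bigr)\bigr|^2 \, dt.$$
By Proposition \ref{prop:g_1}(3) applied through the definition (\ref{Definition of g_T}), $\hat{f}_T \geq 0$ on both $\mathbb{R}$ and $i\mathbb{R}$. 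Consequently each Eisenstein integrand is non-negative, and each term $\hat{f}_T\bigl(r(\lambda_j)\bigr)|u_j(z)|^2$ with $\lambda_j \geq \tfrac{1}{4}$ is non-negative. Discarding all these non-negative contributions from the right-hand side gives
$$\sum_{j:\,\lambda_j < \frac{1}{4}} \hat{f}_T\bigl(r(\lambda_j)\bigr) |u_j(z)|^2 \leq K_T(z,z) = \sum_{\gamma \in \Gamma_X} k_T(z, \gamma z),$$
which is exactly the asserted inequality.

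The main obstacle is justifying the spectral identity \emph{pointwise} in $z$ rather than merely in the $L^2$-sense, and ensuring the Eisenstein integral converges absolutely. This is handled by the smoothness and compact support of $f_T$, which make $\hat{f}_T$ Schwartz-class along horizontal lines in $\mathbb{C}$ and guarantee absolute convergence of the Eisenstein contribution; smoothness of the kernel $K_T(z,w)$ together with only finitely many non-zero terms near the diagonal ensures no issues at the level of kernels. In practice we simply quote \cite[Proposition 5.2]{Ga02}, where this pre-trace inequality is stated in the precise form required, under hypotheses that are met by our family $f_T$.
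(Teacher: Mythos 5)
Your argument is correct, and in substance it is the same mechanism the paper relies on: positivity of $\hat{f}_{T}$ on $\mathbb{R}\cup i[0,\tfrac12]$ lets one throw away the tempered and continuous parts of a pre-trace expansion, leaving only the eigenvalues below $\tfrac14$. The difference is one of economy: the paper's proof is a one-line deduction from \cite[Proposition 5.2]{Ga02}, which already packages the pre-trace \emph{inequality} in a form valid for general (even infinite-area) quotients, whereas you re-derive the input via the full Roelcke--Selberg expansion of the automorphic kernel $K_T(z,w)=\sum_{\gamma\in\Gamma_X}k_T(d(z,\gamma w))$, with the cuspidal/residual eigenfunctions plus the Eisenstein integrals attached to the $n$ cusps, and then discard the non-negative terms. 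In the finite-area setting of this paper your route is legitimate and essentially self-contained, provided you quote the standard admissibility statement guaranteeing the pointwise (not just $L^2$) spectral expansion of such kernels for smooth, compactly supported point-pair invariants (e.g. \cite[Theorem 7.4]{Iw02}); your closing appeal to \cite[Proposition 5.2]{Ga02} covers exactly this convergence issue, so nothing is missing. What the paper's citation buys is that no Eisenstein theory needs to be invoked at all; what your derivation buys is transparency about where the inequality comes from and why the non-negativity hypotheses on $\hat{f}_{T}$ (Proposition \ref{prop:g_1}(3)) are exactly what is needed.
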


Lemma \ref{Pre-trace inequality} is immediately deduced from \cite[Proposition 5.2]{Ga02},
using the fact that $\hat{f}_{T}$ is non-negative on $\mathbb{R}\cup i[0,\frac{1}{2}]$
(the image of $[0,\infty)$ under $r$). We refer to the left hand
side of (\ref{eq:Pre-trace inequality}) as the spectral side and
the right hand side as the geometric side. We prove Theorem \ref{Main analytic theorem}
by integrating (\ref{eq:Pre-trace inequality}). We cannot integrate
(\ref{eq:Pre-trace inequality}) over the full fundamental domain
as the contribution of the parabolic elements
\[
\sum_{\{\text{}\gamma\in\Gamma_{X}\backslash\{\text{Id}\}\text{}\mid\text{}|\trace(\gamma)|=2\text{}\}}k_{T}\left(z,\gamma z\right),
\]
is not absolutely integrable over the fundamental domain $\mathcal{F}$.
We get around this by integrating over the region $D(l)$, as defined
in Definition \ref{def:compact part}, with $l=2$ (the choice $l=2$
could be replaced by any fixed $l>1$). This leads to another issue:
we could potentially lose information on the spectral side after integrating.
This could happen if an eigenfunction concentrated outside $D(2)$.
The following lemma resolves this issue. From now on we write $D=D(2)$.
\begin{lem}[{\cite[Lemma 4.1]{Ga02}}]
 \label{lem:Collarlemmacusps}For any $\kappa>0$, there is a constant
$c\left(\kappa\right)>0$ such that for any $u_{j}$ with $\lambda_{j}\leqslant\frac{1}{4}-\kappa$,
we have
\[
\int_{D}|u_{j}(z)|^{2}d\mu(z)\geqslant c\left(\kappa\right).
\]
The constant $c$ does not depend on the surface $X$. 
\end{lem}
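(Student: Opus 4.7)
The plan is to show that an eigenfunction with eigenvalue bounded away from $\frac{1}{4}$ cannot concentrate in the cusps, by Fourier analysing $u_j$ in each cusp chart and comparing its $L^{2}$-mass on $\bigsqcup_{i}D_{\mathfrak{a}_{i}}(2)$ to its mass on the collars $\{1\leq y<2\}$, which sit inside $D$ by Definition \ref{def:compact part}.

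First, pull $u_j$ back through each chart $\sigma_{\mathfrak{a}_{i}}$: the function $\tilde{u}_{j}^{(i)}(x+iy)\eqdf u_{j}(\sigma_{\mathfrak{a}_{i}}(x+iy))$ is $1$-periodic in $x$ on $P(1)$, so it has a Fourier expansion $\tilde{u}_{j}^{(i)}(x+iy)=\sum_{k\in\Z}c_{k}^{(i)}(y)e^{2\pi ikx}$. Separation of variables in $\Delta\tilde u_j^{(i)}=\lambda_{j}\tilde u_j^{(i)}$ combined with the $L^{2}$-integrability of $u_j$ at infinity selects the decaying solution of each ODE, giving
\[
c_{0}^{(i)}(y)=b^{(i)}y^{1-s},\qquad c_{k}^{(i)}(y)=A_{k}^{(i)}\sqrt{y}\,K_{s-\frac{1}{2}}\!\left(2\pi|k|y\right)\ \ (k\neq0),
\]
with $s=\tfrac12+\sqrt{\tfrac14-\lambda_{j}}\geq\tfrac12+\sqrt{\kappa}$; the growing solutions $y^{s}$ and $\sqrt{y}\,I_{s-\frac{1}{2}}(2\pi|k|y)$ are ruled out by non-integrability.

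The heart of the proof is the mode-by-mode bound
\[
\int_{2}^{\infty}|c_{k}^{(i)}(y)|^{2}\,\frac{dy}{y^{2}}\ \leq\ C(\kappa)\int_{1}^{2}|c_{k}^{(i)}(y)|^{2}\,\frac{dy}{y^{2}},
\]
with a constant $C(\kappa)$ independent of $k$, of $\lambda_j\leq \tfrac14-\kappa$, and of the surface $X$. For $k=0$ this is a direct computation from $c_0^{(i)}(y)=b^{(i)}y^{1-s}$ using $2s-1\geq 2\sqrt{\kappa}$. For $k\neq0$ it follows from the exponential decay $K_{\nu}(u)\sim\sqrt{\pi/(2u)}\,e^{-u}$ as $u\to\infty$, valid uniformly for $\nu=s-\tfrac12\in[\sqrt{\kappa},\tfrac12]$, after the substitution $u=2\pi|k|y$. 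Summing over $k$ via Parseval in each cusp and then over the $n$ pairwise disjoint cusps yields
\[
\int_{\bigsqcup_{i}D_{\mathfrak{a}_{i}}(2)}|u_{j}|^{2}\,d\mu\ \leq\ C(\kappa)\int_{D}|u_{j}|^{2}\,d\mu.
\]
Combined with $\|u_{j}\|_{L^{2}(X)}^{2}=1=\int_{D}|u_{j}|^{2}d\mu+\int_{\bigsqcup_{i}D_{\mathfrak{a}_{i}}(2)}|u_{j}|^{2}d\mu$, this gives $\int_{D}|u_{j}|^{2}\,d\mu\geq (1+C(\kappa))^{-1}\eqdf c(\kappa)$, with $c(\kappa)$ depending only on $\kappa$ since the preceding analysis is entirely local in each cusp.

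The one technical point requiring care is making the constant $C(\kappa)$ uniform in $k\in\Z\setminus\{0\}$ and in the order $\nu\in[\sqrt{\kappa},\tfrac12]$ of the Macdonald function; this is routine provided one treats the regime $|k|=1$ (using that $K_{\nu}(u)$ is bounded on compact $u$-intervals and decays exponentially thereafter) separately from $|k|\geq 2$, where the exponential decay alone already gives a ratio bound far stronger than needed.
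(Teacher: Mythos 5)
Your proof is correct: the cusp Fourier decomposition, the identification of the $L^{2}$-decaying branch ($y^{1-s}$ for $k=0$, $\sqrt{y}\,K_{s-1/2}(2\pi|k|y)$ for $k\neq0$), and the mode-by-mode comparison of the mass above height $2$ with the collar mass on $\{1\leqslant y<2\}\subset D$ is precisely the substance of the cited \cite[Lemma 4.1]{Ga02}, which the paper itself uses as a black box (see the remark following the lemma). One small streamlining of your ``technical point'': the bound $K_{\nu}(u)\leqslant e^{-(u-v)}K_{\nu}(v)$ for $u\geqslant v\geqslant 0$, immediate from the representation $K_{\nu}(u)=\int_{0}^{\infty}e^{-u\cosh t}\cosh(\nu t)\,dt$ and $\cosh t\geqslant 1$, together with the monotonicity of $K_{\nu}(u)^2/u$, gives the ratio bound for the $k\neq0$ modes uniformly in $|k|\geqslant1$ and $\nu\in[0,\tfrac12]$ without separating $|k|=1$ from $|k|\geqslant2$, so the $\kappa$-dependence of $c(\kappa)$ arises solely from the zero mode.
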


The upshot is that when we integrate (\ref{eq:Pre-trace inequality})
over $D$, we obtain something bounded on the geometric side and we
get a definite contribution from each eigenvalue on the spectral side. 
\begin{rem}
\cite[Lemma 4.1]{Ga02} is stated for quotients of $\mathbb{H}$ by
geometrically finite subgroups of $\text{SL}_{2}(\mathbb{Z})$. The
proof extends trivially to all finite-area non-compact surfaces, as
noted in \cite[Footnote 10]{Ga02}. 
\end{rem}

\subsection{Proof of Theorem \ref{Theorem: Main analytic Theorem }}

We conclude this section by proving Theorem \ref{Theorem: Main analytic Theorem }.
\begin{proof}[Proof of Theorem \ref{Theorem: Main analytic Theorem }]
 Recall that $X$ is a finite-area non-compact hyperbolic surface
with genus $g$, $n=o\left(g^{\frac{1}{2}}\right)$ cusps. We write
$\lambda_{j}=\lambda_{j}\left(X\right)$ and recall that $X$ has
first non-zero Laplacian eigenvalue $\lambda_{1}\leqslant\frac{3}{16}$.
Let $T=4\log g$. By Lemma \ref{Pre-trace inequality}, 
\begin{equation}
\sum_{j:\text{}\lambda_{j}<\frac{1}{4}}\hat{f}_{T}\left(r\left(\lambda_{j}\right)\right)|u_{j}(z)|^{2}\leqslant\sum_{\gamma\in\Gamma_{X}}k_{T}\left(z,\gamma z\right).\label{eq:Pretraceineq}
\end{equation}
Since $\hat{f}_{T}$ is non-negative on $i\mathbb{R}$, $\hat{f}_{T}\circ r$
is non-negative on $[0,\frac{1}{4}]$ and (\ref{eq:Pretraceineq})
still holds if we reduce the sum to just $\lambda_{0}$ and $\lambda_{1}$.
Integrating (\ref{eq:Pretraceineq}) over $D$, we get 
\begin{align}
\hat{f}_{T}\left(r\left(\lambda_{0}\right)\right)\int_{D}|u_{0}(z)|^{2}d\mu(z)+\hat{f}_{T}\left(r\left(\lambda_{1}\right)\right)\int_{D}|u_{1}(z)|^{2}d\mu(z) & \leqslant\int_{D}\sum_{\gamma\in\Gamma_{X}}k_{T}\left(z,\gamma z\right)d\mu(z).\label{eq: Integrated pre-trace}
\end{align}
First we look at the spectral side. The eigenvalue $\lambda_{0}=0$
corresponds to the constant eigenfunction 
\[
u_{0}\left(z\right)=\frac{1}{\sqrt{\text{Vol}(X)}}.
\]
We have
\begin{align*}
\hat{f}_{T}\left(r\left(\lambda_{0}\right)\right)\int_{D}|u_{0}(z)|^{2}d\mu(z) & =\frac{\text{Vol}\left(D\right)}{\text{Vol}\left(X\right)}\hat{f}_{T}\left(\frac{i}{2}\right).
\end{align*}
Recall that
\[
D=\mathcal{F}\backslash\bigsqcup_{i=1}^{n}D_{\mathfrak{a}_{i}}\left(2\right).
\]
Since $D_{\mathfrak{a}_{i}}\left(2\right)$ is isometric to $\{z\in\mathbb{H}\mid0<x<1,y\geqslant2\}$,
$\text{Vol}\left(D_{\mathfrak{a}_{i}}(2)\right)=\frac{1}{2}$ for
each $i$. By Gauss-Bonnet, $\text{Vol}(X)=2\pi\left(2g-2+n\right)$
and we see that
\[
\frac{\text{Vol}(D)}{\text{Vol}(X)}=\frac{2\pi\left(2g-2+n\right)-\frac{n}{2}}{2\pi\left(2g-2+n\right)}=1+O\left(\frac{n}{g}\right).
\]
For the contribution of $\text{\ensuremath{\lambda_{1}}},$ by Lemma
\ref{lem:Collarlemmacusps} with $\kappa=\frac{1}{16}$, there is
a constant $c>0$ with
\begin{equation}
\hat{f}_{T}\left(r\left(\lambda_{1}\right)\right)\int_{D}|u_{1}(z)|^{2}d\mu(z)\geqslant c\hat{f}_{T}\left(r\left(\lambda_{1}\right)\right).\label{eq:Applied collar lemma}
\end{equation}
Let $\varepsilon>0$ be given, then since $\lambda_{1}\leqslant\frac{3}{16}$,
$r(\lambda_{1})=i\sqrt{\frac{1}{4}-\lambda_{1}}$, then by Lemma \ref{lem:Upper bound on g_T},
there is a constant $C_{\varepsilon}>0$ with 
\begin{equation}
\hat{f}_{T}\left(r\left(\lambda_{1}\right)\right)\geqslant TC_{\ep}e^{T(1-\ep)\sqrt{\frac{1}{4}-\lambda_{1}}}.\label{eq:Lower bound on f^}
\end{equation}
Combining (\ref{eq: Integrated pre-trace}), (\ref{eq:Applied collar lemma})
and (\ref{eq:Lower bound on f^}), we see there exists a constant
$C(\ep)>0$ with
\begin{align}
TC(\ep)e^{T(1-\ep)\sqrt{\frac{1}{4}-\lambda_{1}}}+\left(1+O\left(\frac{n}{g}\right)\right)\hat{f}_{T}\left(\frac{i}{2}\right) & \leqslant\int_{D}\sum_{\gamma\in\Gamma_{X}}k_{T}\left(z,\gamma z\right)d\mu(z).\label{eq:spectral side evaluated}
\end{align}
We now look at the geometric side. We arrange the sum in the geometric
side into the contribution from the identity, parabolic and hyperbolic
elements to obtain
\begin{align*}
\int_{D}\sum_{\gamma\in\Gamma_{X}}k_{T}\left(z,\gamma z\right)d\mu(z)= & \sum_{\gamma\in\Gamma_{X}}\int_{D}k_{T}\left(z,\gamma z\right)d\mu(z)\\
= & \int_{D}k_{T}\left(z,z\right)d\mu(z)+\sum_{\{\gamma\in\Gamma_{X}\mid|\trace(\gamma)|>2\}}\int_{D}k_{T}\left(z,\gamma z\right)d\mu(z)\\
 & +\sum_{\{\text{}\gamma\in\Gamma_{X}\backslash\{\text{Id}\}\text{}\mid\text{}|\trace(\gamma)|=2\text{}\}}\int_{D}k_{T}\left(z,\gamma z\right)d\mu(z).
\end{align*}
Interchanging summation and integration is justified since $D$ is
a compact region and $k_{T}$ is supported in $[0,T)$, then for each
$z\in D$, $\#\{\gamma\in\Gamma_{X}\mid d(z,\gamma z)<T\}$ is finite
and the summation is over finitely many terms.

First we treat the contribution of the identity. Since $k_{T}(z,w)=k_{T}\left(d(z,w)\right),$
\begin{align*}
\int_{D}k_{T}\left(z,z\right)d\mu(z) & =\text{Vol}(D)k_{T}(0).
\end{align*}
A calculation involving the Abel Transform, see for example the proof
of \cite[Theorem 9.5.3]{Bu92}, gives that
\[
k_{T}(0)=\frac{1}{4\pi}\int_{-\infty}^{\infty}r\hat{f}_{T}(r)\tanh(\pi r)dr.
\]
We calculate
\begin{align*}
\int_{-\infty}^{\infty}r\hat{f}_{T}(r)\tanh(\pi r)dr & =T\int_{-\infty}^{\infty}r\hat{f}_{1}\left(Tr\right)\tanh(\pi r)dr\\
 & =\frac{1}{T}\int_{-\infty}^{\infty}r'\hat{f}_{1}\left(r'\right)\tanh\left(\frac{\pi r'}{T}\right)dr'\\
 & \leqslant\frac{2}{T}\int_{0}^{\infty}r'\hat{f}_{1}\left(r'\right)dr'\ll\frac{1}{T},
\end{align*}
where the last line follows from the fact that $f_{1}$ is compactly
supported, thus $\hat{f}_{1}$ is a Schwartz function and decays faster
that the inverse of any polynomial. Since $\text{Vol}(D)=2\pi\left(2g-2+n\right)-\frac{n}{2}$,
and $X$ has $o\left(g^{\frac{1}{2}}\right)$ cusps, this tells us
that 
\begin{equation}
\int_{D}k_{T}\left(z,z\right)d\mu(z)=O\left(g\right).\label{eq:Identity contribution}
\end{equation}
Now we look at the hyperbolic terms. By the non-negativity of $k_{T}$,
\[
\sum_{\{\text{}\gamma\in\Gamma_{X}\text{}\mid\text{}|\trace(\gamma)|>2\text{}\}}\int_{D}k_{T}\left(z,\gamma z\right)d\mu(z)\leqslant\sum_{\{\text{}\gamma\in\Gamma_{X}\text{}\mid\text{}|\trace(\gamma)|>2\text{}\}}\int_{\mathcal{F}}k_{T}\left(z,\gamma z\right)d\mu(z).
\]
By arranging the sum into conjugacy classes and unfolding the integral,
one can compute that 
\begin{equation}
\sum_{\{\text{}\gamma\in\Gamma_{X}\text{}\mid\text{}|\trace(\gamma)|>2\text{}\}}\int_{\mathcal{F}}k_{T}\left(z,\gamma z\right)d\mu(z)=\sum_{\gamma\in\mathcal{P}(X)}\sum_{k=1}^{\infty}\frac{l_{\gamma}\left(X\right)}{2\sinh\left(\frac{kl_{\gamma}(x)}{2}\right)}f\left(kl_{\gamma}\left(X\right)\right).\label{eq:Hyperbolic contribution}
\end{equation}
This computation is carried out in detail in \cite[Section 10.2]{Iw02}. 

It remains to bound the contribution of the parabolic elements. Any
$\gamma\in\Gamma_{X}\backslash\{\text{Id}\}$ with $|\trace(\gamma)|=2$
is conjugate to $\gamma_{\mathfrak{a}_{i}}^{l}$ for some unique pair
$i\in\{1,\dots,n\}$ and $l\in\mathbb{Z}\backslash\{0\}$. Since the
centralizer of $\gamma_{\mathfrak{a}_{i}}^{l}$ in $\Gamma_{X}$ is
$\Gamma_{\mathfrak{a}_{i}}$, we see
\[
\sum_{\{\text{}\gamma\in\Gamma_{X}\backslash\{\text{Id}\}\text{}\mid\text{}|\trace(\gamma)|=2\text{}\}}\int_{D}k_{T}\left(z,\gamma z\right)d\mu(z)=\sum_{i=1}^{n}\sum_{l\in\mathbb{Z}^{*}}\sum_{\tau\in\Gamma_{\mathfrak{a}_{i}}\backslash\Gamma}\int_{D}k_{T}\left(z,\tau^{-1}\gamma_{\mathfrak{a}_{i}}^{l}\tau z\right)d\mu(z).
\]
Since $k_{T}$ and $d\mu$ are invariant under isometries, by unfolding
the integral, denoting $\Gamma\cdot D\eqdf\cup_{\gamma\in\Gamma}\gamma D$,
we calculate
\[
\sum_{\tau\in\Gamma_{\mathfrak{a}_{i}}\backslash\Gamma}\int_{D}k_{T}\left(z,\tau^{-1}\gamma_{\mathfrak{a}_{i}}^{l}\tau z\right)d\mu(z)=\int_{\Gamma_{\mathfrak{a}_{i}}\backslash\Gamma\cdot D}k_{T}\left(z,\gamma_{\mathfrak{a}_{i}}^{l}z\right)d\mu(z).
\]
We can choose a fundamental domain $\tilde{\mathcal{F}_{i}}$ for
the action of $\Gamma_{\mathfrak{a}_{i}}$ on $\Gamma\cdot D$ so
that
\begin{align*}
\tilde{\mathcal{F}_{i}} & \subseteq\sigma_{\mathfrak{a}_{i}}\{z\in\mathbb{H}\mid0<x\leqslant1,0<y\leqslant2\},
\end{align*}
and we see, recalling that $\sigma_{\mathfrak{a}_{i}}^{-1}\gamma_{\mathfrak{a}_{i}}\sigma_{\mathfrak{a}_{i}}\left(z\right)=z+1$,

\begin{align*}
\sum_{\tau\in\Gamma_{\mathfrak{a}_{i}}\backslash\Gamma}\int_{D}k_{T}(z,\tau^{-1}\gamma_{\mathfrak{a}_{i}}^{l}\tau z)d\mu(z) & =\int_{\tilde{\mathcal{F}_{i}}}k_{T}(z,\gamma_{\mathfrak{a}_{i}}^{l}z)d\mu(z)\\
 & =\int_{\sigma_{\mathfrak{a}_{i}}^{-1}\left(\tilde{\mathcal{F}_{i}}\right)}k_{T}(z,z+l)d\mu(z)\\
 & \leqslant\int_{x=0}^{x=1}\int_{y=0}^{y=2}k_{T}(z,z+l)d\mu(z).
\end{align*}
We sum over the parabolic conjugacy classes to calculate, 
\begin{align}
\sum_{\{\text{}\gamma\in\Gamma_{X}\backslash\{\text{Id}\}\text{}\mid\text{}|\trace(\gamma)|=2\text{}\}}\int_{D}k_{T}(z,\gamma z)d\mu(z) & \leqslant n\sum_{l\in\mathbb{Z^{*}}}\int_{0}^{1}\int_{0}^{2}k_{T}(z,z+l)d\mu(z)\nonumber \\
 & =n\sum_{l\in\mathbb{Z^{*}}}\int_{0}^{2}k_{T}\left(\text{arcosh}\left(1+\frac{l^{2}}{2y^{2}}\right)\right)y^{-2}dy\nonumber \\
 & =n\sum_{l\in\mathbb{N}}\frac{\sqrt{2}}{l}\int_{\min\left\{ \text{arcosh}\left(1+\frac{l^{2}}{8}\right),T\right\} }^{T}\frac{k_{T}(\rho)\sinh(\rho)}{\sqrt{\cosh(\rho)-1}}d\rho.\label{eq:summation}
\end{align}
On the second line we used that $\cosh d\left(z,z+l\right)=1+\frac{l^{2}}{2y^{2}}$
and on the third line we used the change of variables $\rho=\text{arcosh}\left(1+\frac{l^{2}}{2y^{2}}\right)$
and that $\text{Supp}\left(k_{T}\right)\subseteq[0,T)$. When $\text{arcosh}\left(1+\frac{l^{2}}{8}\right)\leqslant T$,
we use that $f_{T}$ is the Abel transform of $k_{T}$ to see that
\begin{align*}
\int_{\min\left\{ \text{arcosh}\left(1+\frac{l^{2}}{8}\right),T\right\} }^{T}\frac{k_{T}(\rho)\sinh(\rho)}{\sqrt{\cosh(\rho)-1}}d\rho & \leqslant\int_{0}^{T}\frac{k_{T}(\rho)\sinh(\rho)}{\sqrt{\cosh(\rho)-1}}d\rho=f_{T}\left(0\right)=f_{1}\left(0\right).
\end{align*}
If $\text{arcosh}\left(1+\frac{l^{2}}{8}\right)\leqslant T$ then
the contribution to the sum (\ref{eq:summation}) is $0$ and we conclude
that 

\begin{align*}
\sum_{\{\text{}\gamma\in\Gamma_{X}\backslash\{\text{Id}\}\text{}\mid\text{}|\trace(\gamma)|=2\text{}\}}\int_{D}k_{T}(z,\gamma z)d\mu(z) & \leqslant2nf_{1}(0)\sum_{l=1}^{\lfloor\sqrt{8\cosh T}\rfloor}\frac{1}{l}\leqslant2nf_{1}(0)\log\left(2\sqrt{2}e^{\frac{T}{2}}\right).
\end{align*}

Thus combining (\ref{eq:spectral side evaluated}), (\ref{eq:Identity contribution}),
(\ref{eq:Hyperbolic contribution}) and \label{eq:Parabolic contribution}(\ref{eq:Parabolic contribution}),
we conclude that 
\begin{align*}
 & TC(\ep)e^{T(1-\ep)\sqrt{\frac{1}{4}-\lambda_{1}}}+\left(1+O\left(\frac{n}{g}\right)\right)\hat{f}_{T}\left(\frac{i}{2}\right)\\
\leqslant & \sum_{\gamma\in\mathcal{P}(X)}\sum_{k=1}^{\infty}\frac{l_{\gamma}(X)}{2\sinh\left(\frac{kl_{\gamma}(X)}{2}\right)}f_{T}\left(kl_{\gamma}(X)\right)+2nf_{1}(0)\log\left(2\sqrt{2}e^{\frac{T}{2}}\right)+O\left(g\right).
\end{align*}
Recalling that $T=4\log g$, since $f_{T}$ is even,
\begin{align*}
\hat{f}_{T}\left(\frac{i}{2}\right) & =\int_{0}^{\infty}2\cosh\left(\frac{x}{2}\right)f_{T}(x)dx=O\left(g^{2}\right),
\end{align*}
and we deduce that 
\[
C(\varepsilon)\log\left(g\right)g^{4(1-\varepsilon)\sqrt{\frac{1}{4}-\lambda_{1}}}\leqslant\sum_{\gamma\in\mathcal{P}(X)}\sum_{k=1}^{\infty}\frac{l_{\gamma}\left(X\right)}{2\sinh\left(\frac{kl_{\gamma}(x)}{2}\right)}f_{T}\left(kl_{\gamma}\left(X\right)\right)-\hat{f}_{T}\left(\frac{i}{2}\right)+O\left(ng\right),
\]
as claimed.
\end{proof}
\begin{rem}
\label{rem:positivity of random variable}By considering only the
zero eigenvalue, the proof of Theorem \ref{Theorem: Main analytic Theorem }
gives that there exists a constant $\nu\geqslant0$ such that for
sufficiently large $g$ and for any $X\in\mathcal{M}_{g,n}$, 
\[
\sum_{\gamma\in\mathcal{P}(X)}\sum_{k=1}^{\infty}\frac{l_{\gamma}\left(X\right)}{2\sinh\left(\frac{kl_{\gamma}(x)}{2}\right)}f_{T}\left(kl_{\gamma}\left(X\right)\right)-\hat{f}_{T}\left(\frac{i}{2}\right)+\nu ng\geqslant0.
\]
This fact will be important in Section \ref{sec:Proof-of-Theorem}
when we want to apply Markov's inequality to the above quantity, viewed
as a random variable on $\mathcal{M}_{g,n}$.
\end{rem}

\section{Geometric background\label{sec:Geometric-background}}

In this section we shall introduce the necessary background on moduli
space, the Weil-Petersson metric and Mirzakhani's integration formula.
A detailed account of the material in this section can be found in
\cite{Wr20}.

\subsection{Moduli space}

Let $S_{g,n}$ denote an oriented topological surface with genus $g$
and $n$ labeled punctures where $2g-2+n\geqslant1$ and $n\geqslant0$.
A marked surface of signature $\left(g,n\right)$ is a pair $\left(X,\varphi\right)$
where $X$ is a hyperbolic surface with genus $g$ and $n$ cusps
and $\varphi:S_{g,n}\to X$ is a homeomorphism. The Teichmüller space,
denoted by $\mathcal{T}_{g,n}$, is defined by 
\[
\mathcal{T}_{g,n}\stackrel{\text{def}}{=}\{\text{\text{Marked surfaces} }(X,\varphi)\}/\sim,
\]
where $\left(X_{1},\varphi_{1}\right)\sim\left(X_{2},\varphi_{2}\right)$
if there exists an isometry $m:X_{1}\to X_{2}$ such that $\varphi_{2}$
and $m\circ\varphi_{1}$ are isotopic. Let $\text{Homeo}^{+}\left(S_{g,n}\right)$
denote the group of orientation preserving homeomorphisms of $S_{g,n}$
which do not permute the punctures and let $\text{Homeo}_{0}^{+}\left(S_{g,n}\right)$
denote the subgroup of homeomorphisms isotopic to the identity. The
mapping class group is defined as 
\[
\text{MCG}_{g,n}\stackrel{\text{def}}{=}\text{Homeo}^{+}\left(S_{g,n}\right)/\text{Homeo}_{0}^{+}\left(S_{g,n}\right).
\]
$\text{Homeo}^{+}\left(S_{g,n}\right)$ acts on $\mathcal{T}_{g,n}$
by precomposition of the marking and $\text{Homeo}^{+}\left(S_{g,n}\right)$
acts trivially hence $\text{MCG}_{g,n}$ acts on $\mathcal{T}_{g,n}$
and we define the moduli space by 
\[
\mathcal{M}_{g,n}\stackrel{\text{def}}{=}\mathcal{T}_{g,n}/\text{MCG}_{g,n}.
\]
$\mathcal{M}_{g,n}$ can be thought of as the set of equivalence classes
of genus $g$ hyperbolic surfaces with $n$ labeled cusps where two
surfaces are equivalent if they are isometric by an isometry which
preserves the labeling of the cusps. 

Given $\underline{l}\in\mathbb{R}_{\geqslant0}^{n}$, in a similar
way, we define $\mathcal{T}_{g,n}\left(\underline{l}\right)$ as the
Teichmüller space of genus $g$ hyperbolic surfaces with labeled geodesic
boundary components $\left(b_{1},...,b_{n}\right)$ with lengths $\left(l_{1},...,l_{n}\right)$.
We allow $l_{i}=0$, then the boundary component $b_{i}$ is a replaced
by a cusp and we recover 
\[
\mathcal{M}_{g,n}=\mathcal{M}_{g,n}\left(0,...,0\right).
\]

\subsection{Weil-Petersson metric}

The space $\mathcal{T}_{g,n}\left(\underline{l}\right)$ carries a
natural symplectic structure known as the Weil-Petersson symplectic
form and is denoted by $\omega_{WP}$. It is invariant under the action
of the mapping class group and descends to a symplectic form on the
quotient $\mathcal{M}_{g,n}\left(\underline{l}\right)$. The form
$\omega_{WP}$ induces the volume form
\[
\text{dVol}_{WP}\stackrel{\text{def}}{=}\frac{1}{\left(3g-3+n\right)!}\bigwedge_{i=1}^{3g-3+n}\omega_{WP},
\]
which is also invariant under the action of the mapping class group
and descends to a volume form on $\mathcal{M}_{g,n}\left(\underline{l}\right)$.
The quantity $3g-3+n$ appears as the dimension of the Teichmüller
and moduli space. We write $dX$ as shorthand for $\text{dVol}_{WP}$.
We let $V_{g,n}\left(\underline{l}\right)$ denote $\text{Vol}_{WP}\left(\mathcal{M}_{g,n}\left(\underline{l}\right)\right)$,
the total volume of $\mathcal{M}_{g,n}\left(\underline{l}\right)$,
which is finite. We write $V_{g,n}$ to denote $V_{g,n}\left(\underline{0}\right)$.

As in \cite{Mi13,WX21,LW21}, we define a probability measure on $\mathcal{M}_{g,n}$
by normalizing $\text{dVol}_{WP}$. Indeed, for any Borel subset $\mathcal{B\subseteq\mathcal{M}}_{g,n}$,
\[
\mathbb{P}_{WP}^{g,n}\left[\mathcal{B}\right]\stackrel{\text{def}}{=}\frac{1}{V_{g,n}}\int_{\mathcal{M}_{g,n}}\boldsymbol{1}_{\mathcal{B}}dX,
\]
where 
\[
\boldsymbol{1}_{\mathcal{B}}\left(X\right)=\begin{cases}
0 & \text{if \ensuremath{x\notin\mathcal{B}}, }\\
1 & \text{if \ensuremath{x\in\mathcal{B}}. }
\end{cases}
\]
is the indicator function on $\mathcal{B}$. We write $\mathbb{E}_{WP}^{g,n}$
to denote expectation with respect to $\mathbb{P}_{WP}^{g,n}$.

\subsection{Mirzakhani's integration formula}

We recall Mirzakhani's integration formula from \cite{Mi07}. We define
a multi-curve to be an ordered $k$-tuple $\left(\gamma_{1},...,\gamma_{k}\right)$
of disjoint, simple, non-peripheral closed curves. Let $\Gamma=\left[\gamma_{1},...,\gamma_{k}\right]$
denote the homotopy class of a multi-curve. The mapping class group
$\text{MCG}_{g,n}$ acts naturally on homotopy classes of multi-curves
and we denote the orbit containing $\Gamma$ by
\[
\mathcal{O}_{\Gamma}=\left\{ \left(g\cdot\gamma_{1},...,g\cdot\gamma_{k}\right)\mid g\in\text{MCG}_{g,n}\right\} .
\]
Given a function $F:\mathbb{R}_{\geqslant0}^{k}\to\mathbb{R}_{\geqslant0}$,
define $F^{\Gamma}:\mathcal{M}_{g,n}\to\mathbb{R}$ by 
\[
F^{\Gamma}\left(X\right)=\sum_{\left(\alpha_{1},...,\alpha_{k}\right)\in\mathcal{O}_{\Gamma}}F\left(l_{\alpha_{1}}\left(X\right),...,l_{\alpha_{k}}\left(X\right)\right),
\]
where $l_{\alpha_{i}}\left(X\right)$ is defined for $\left(X,\varphi\right)\in\mathcal{T}_{g,n}$
as the length of the geodesic in the homotopy class of $\varphi\left(\alpha_{i}\right)$.
Note that the function $F^{\Gamma}$ is well defined on $\mathcal{M}_{g,n}$
since we are summing over the orbit $\mathcal{O}_{\Gamma}$. Let $S_{g,n}\left(\Gamma\right)$
denote the result of cutting the surface $S_{g,n}$ along $\left(\gamma_{1},...,\gamma_{k}\right)$,
then $S_{g,n}\left(\Gamma\right)=\sqcup_{i=1}^{s}S_{g_{i},n_{i}}$
for some $\left\{ \left(g_{i},n_{i}\right)\right\} _{i=1}^{s}$. Each
$\gamma_{i}$ gives rise to two boundary components $\gamma_{i}^{1}$
and $\gamma_{i}^{2}$ of $S_{g,n}\left(\Gamma\right)$. Given $\underline{x}=\left(x_{1},...,x_{k}\right)\in\mathbb{R}_{\geqslant0}^{k}$,
let $\mathcal{M}\left(S_{g,n}\left(\Gamma\right);l_{\Gamma}=\underline{x}\right)$
be the moduli space of hyperbolic surfaces homeomorphic to $S_{g,n}\left(\Gamma\right)$
such that for $1\leqslant i\leqslant k$, $l_{\gamma_{i}^{1}}=l_{\gamma_{i}^{2}}=x_{i}.$
Let $\underline{x}^{(i)}$ denote the tuple of coordinates $x_{j}$
of $\underline{x}$ such that $\gamma_{j}$ is a boundary component
of $S_{g_{i},n_{i}}$. We have that 
\[
\mathcal{M}\left(S_{g,n}\left(\Gamma\right);l_{\Gamma}=\underline{x}\right)=\prod_{i=1}^{s}\mathcal{M}_{g_{i},n_{i}}\left(\underline{x}^{(i)}\right),
\]
and we define 
\begin{align*}
V_{g,n}\left(\Gamma,\underline{x}\right)\stackrel{\text{def}}{=} & \text{ Vol}_{WP}\left(\mathcal{M}\left(S_{g,n}\left(\Gamma\right);l_{\Gamma}=\underline{x}\right)\right)=\prod_{i=1}^{s}V_{g_{i},n_{i}}\left(\underline{x}^{(i)}\right).
\end{align*}
In terms of the above notation we have the following.
\begin{thm}[{Mirzakhani's Integration Formula \cite[Theorem 7.1]{Mi07}}]
\label{thm:MIF} Given $\Gamma=\left[\gamma_{1},...,\gamma_{k}\right]$,
\[
\int_{\mathcal{M}_{g,n}}F^{\Gamma}\left(X\right)dX=C_{\Gamma}\int_{\mathbb{R}_{\geqslant0}^{k}}F\left(x_{1},...,x_{k}\right)V_{g,n}\left(\Gamma,\underline{x}\right)x_{1}\cdots x_{k}dx_{1}\cdots dx_{k},
\]
where the constant $C_{\Gamma}\in(0,1]$ only depends on $\Gamma$.
Moreover, if $g>2$ and $\Gamma=\left[\gamma\right]$ where $\gamma$
is a simple, non-separating closed curve, then $C_{\Gamma}=\frac{1}{2}.$
\end{thm}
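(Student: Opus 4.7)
The plan is to deduce the formula by unfolding the integral via a natural covering space of $\mathcal{M}_{g,n}$ associated to the orbit $\mathcal{O}_\Gamma$, and then computing the resulting integral in Fenchel--Nielsen coordinates using Wolpert's magic formula for the Weil--Petersson form.

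First I would introduce the moduli space of pairs
$$\mathcal{M}_{g,n}^{\Gamma} \eqdf \left\{ (X, (\alpha_1, \dots, \alpha_k)) : X \in \mathcal{M}_{g,n},\ (\alpha_1, \dots, \alpha_k) \in \mathcal{O}_\Gamma \right\},$$
which is a $(3g{-}3{+}n)$-dimensional orbifold equipped with the pullback of the Weil--Petersson volume form under the forgetful map $p : \mathcal{M}_{g,n}^{\Gamma} \to \mathcal{M}_{g,n}$. Since the fiber of $p$ over $X$ is exactly the index set of the sum defining $F^\Gamma(X)$, we get the unfolding identity
$$\int_{\mathcal{M}_{g,n}} F^\Gamma(X) \, dX = \int_{\mathcal{M}_{g,n}^{\Gamma}} F(\ell_{\alpha_1}(X), \dots, \ell_{\alpha_k}(X)) \, dX,$$
and it suffices to evaluate the right-hand side.

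Next I would identify $\mathcal{M}_{g,n}^{\Gamma}$ as the quotient $\mathcal{T}_{g,n} / \mathrm{Stab}(\Gamma)$ of Teichm\"uller space by the subgroup $\mathrm{Stab}(\Gamma) \subset \mathrm{MCG}_{g,n}$ preserving the ordered multi-curve $\Gamma$. Extending $\Gamma$ to a pants decomposition of $S_{g,n}$ and using the associated Fenchel--Nielsen coordinates $(\ell_i, \tau_i)_{i=1}^{3g-3+n}$, Wolpert's magic formula gives $\omega_{WP} = \sum_i d\ell_i \wedge d\tau_i$, hence
$$\mathrm{dVol}_{WP} = \prod_{i=1}^{3g-3+n} d\ell_i \, d\tau_i.$$
The subgroup $\langle T_{\gamma_1}, \dots, T_{\gamma_k} \rangle \subset \mathrm{Stab}(\Gamma)$ of Dehn twists around the curves of $\Gamma$ acts on Teichm\"uller space by $\tau_i \mapsto \tau_i + \ell_i$ (for $i \leqslant k$), so integrating over the $k$ twist directions $\tau_1, \dots, \tau_k$ modulo these Dehn twists contributes a factor $\ell_1 \cdots \ell_k = x_1 \cdots x_k$. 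Integrating over the remaining $6g-6+2n-k$ coordinates, after observing that the complementary quotient of $\mathrm{Stab}(\Gamma)$ acts as the mapping class group of the cut surface $S_{g,n}(\Gamma)$, yields $V_{g,n}(\Gamma, \underline{x}) = \prod_{i=1}^s V_{g_i, n_i}(\underline{x}^{(i)})$ on each fiber of the length map $\mathcal{M}_{g,n}^{\Gamma} \to \mathbb{R}_{\geqslant 0}^k$.

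The constant $C_\Gamma \in (0,1]$ then arises as the reciprocal of the order of the finite residual group $\mathrm{Stab}(\Gamma)/\bigl(\mathrm{Stab}_0(\Gamma) \cdot \langle T_{\gamma_1}, \dots, T_{\gamma_k}\rangle\bigr)$, where $\mathrm{Stab}_0(\Gamma)$ is the subgroup that fixes each component $\gamma_i$ with orientation: this finite group can only permute the $\gamma_i$ within their orbit or reverse their orientations, and the factor $C_\Gamma$ compensates for the over-counting in unfolding. The main obstacle is a careful orbifold-theoretic bookkeeping of this last quotient --- in particular, matching up conventions so that the factors in $V_{g,n}(\Gamma, \underline{x})$ correspond correctly to the connected components of $S_{g,n}(\Gamma)$, and handling possible non-freeness of the $\mathrm{MCG}_{g,n}$-action. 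In the special case $\Gamma = [\gamma]$ with $\gamma$ simple non-separating and $g > 2$, one checks that $\mathrm{Stab}(\gamma)/(\mathrm{Stab}_0(\gamma) \cdot \langle T_\gamma \rangle)$ is generated by the single involution reversing the orientation of $\gamma$, giving $C_\Gamma = \tfrac{1}{2}$.
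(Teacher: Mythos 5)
This theorem is not proved in the paper at all --- it is quoted verbatim as an external result (Mirzakhani's Theorem 7.1 in \cite{Mi07}), so there is no internal proof to compare against. Your sketch is the standard argument of Mirzakhani's original proof (unfolding over the intermediate cover $\mathcal{T}_{g,n}/\mathrm{Stab}(\Gamma)$, Wolpert's formula $\omega_{WP}=\sum_i d\ell_i\wedge d\tau_i$, the twist integration producing $x_1\cdots x_k$, and the finite residual symmetry group producing $C_\Gamma$, with the side-swapping involution of a non-separating curve giving $\tfrac{1}{2}$), and it is correct in outline; the only caveat is that, since the paper's orbit $\mathcal{O}_\Gamma$ is of \emph{ordered} tuples, the residual group cannot permute the $\gamma_i$, so the bookkeeping you defer reduces to the per-curve side-swap/orientation-reversal symmetries.
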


\section{Geometric estimates\label{sec:Geometric-estimates}}

Recall that the family of test functions $f_{T}$ in Theorem \ref{Theorem: Main analytic Theorem }
is defined in (\ref{Definition of g_T}) with $T=4\log g$. For $X\in\mathcal{M}_{g,n}$,
$\gamma\in\mathcal{P}(X)$, $k\in\mathbb{N}$, we shall denote 
\[
H_{X,k}(\gamma)\stackrel{\text{def}}{=}\frac{l_{\gamma}\left(X\right)}{2\sinh\left(\frac{kl_{\gamma}(x)}{2}\right)}f_{T}\left(kl_{\gamma}\left(X\right)\right).
\]
The goal of this section is to prove the following.
\begin{thm}
\label{Main Geometric Theorem}For $0\leqslant\alpha<\frac{1}{2}$,
let $n=O\left(g^{\alpha}\right)$. For any $\ep_{1}>0$ there exists
a constant $c_{1}\left(\ep_{1}\right)>0$, independent of $\alpha$,
with
\[
\mathbb{E}_{WP}^{g,n}\left[\sum_{\gamma\in\mathcal{P}(X)}\sum_{k=1}^{\infty}H_{X,k}(\gamma)-\hat{f_{T}}\left(\frac{i}{2}\right)\right]\ll n^{2}g+\log\left(g\right)^{5}\cdot g+c_{1}\left(\ep_{1}\right)\left(\log g\right)^{\beta+1}\cdot n^{2}\cdot g^{1+4\epsilon_{1}},
\]
where $\beta>0$ is a universal constant.
\end{thm}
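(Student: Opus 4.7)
The plan is to split $\sum_{\gamma\in\mathcal{P}(X)}\sum_{k\geqslant 1}H_{X,k}(\gamma)$ according to the topological type of the closed geodesic $\gamma^k$ in $X$ and to evaluate each resulting contribution via Mirzakhani's integration formula (Theorem \ref{thm:MIF}). The four cases to isolate are: (i) $k=1$ with $\gamma$ simple non-separating; (ii) $k=1$ with $\gamma$ simple separating; (iii) $k=1$ with $\gamma$ non-simple; and (iv) $k\geqslant 2$. The whole point is that case (i) produces a main term which almost cancels $\hat{f}_T(i/2)$, while (ii)--(iv) feed into the error terms on the right-hand side.

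For (i), take $\Gamma=[\gamma]$ with $\gamma$ a non-separating simple closed curve. Cutting along $\gamma$ produces $S_{g-1,n+2}$ and $C_{\Gamma}=\tfrac12$, so Mirzakhani's formula gives
\[
\mathbb{E}_{WP}^{g,n}\!\left[\sum_{\gamma\,\text{non-sep.\ simple}}H_{X,1}(\gamma)\right] \;=\; \frac{1}{2V_{g,n}}\int_{0}^{\infty}\frac{f_T(\ell)\,\ell^2}{2\sinh(\ell/2)}\,V_{g-1,n+2}(\ell,\ell)\,d\ell.
\]
I would use the polynomial structure of $V_{g-1,n+2}(\ell,\ell)$ in $\ell^2$, together with the volume estimates in the appendix, to isolate a leading contribution $(V_{g-1,n+2}/V_{g,n})\cdot 2\int_0^T f_T(\ell)\sinh(\ell/2)\,d\ell$ plus a controlled polynomial remainder. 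Comparison with $\hat{f}_T(i/2)=2\int_0^T f_T(\ell)\cosh(\ell/2)\,d\ell$ then yields three sources of error: the $O(1)$ residual $2\int_0^T f_T(\ell)e^{-\ell/2}\,d\ell$ coming from $\cosh-\sinh$, the multiplicative deviation $|V_{g-1,n+2}/V_{g,n}-1|=O(n^2/g)$ against $\hat{f}_T(i/2)=O(g^2)$ producing the $n^2 g$ term, and the integral of the polynomial remainder against the kernel over $[0,T]$ with $T=4\log g$ producing the $\log(g)^5\cdot g$ term.

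The remaining cases contribute exclusively to the error. For (ii), simple separating curves are indexed by partitions $g=g_1+g_2$ with cusp bipartitions $I\sqcup J=\{1,\ldots,n\}$; Mirzakhani's formula yields a sum of integrals involving $V_{g_1,|I|+1}(\ell)V_{g_2,|J|+1}(\ell)/V_{g,n}$. Applying the cross-volume ratio estimates of the appendix (which carry an $\ep_1$-slack) and integrating over $\ell\in[0,T]$ produces the $c_1(\ep_1)(\log g)^{\beta+1}n^2 g^{1+4\ep_1}$ term; the $n^2$ factor comes from the number of cusp bipartitions. For (iv), the $\sinh(k\ell/2)^{-1}$ factor supplies uniform exponential decay in $k$, so this subsum is a geometric series controlled by the short-geodesic analysis and is absorbed into the other error terms. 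For (iii), non-simple primitive geodesics are handled by a Birman--Series-type reduction, decomposing them into simple multicurves with exponentially small weighting in the self-intersection number.

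The main obstacle is establishing the uniform control of $V_{g',n'}/V_{g,n}$ and of the polynomial expansions of $V_{g',n'}(\underline{L})$ as $n=O(g^\alpha)$ is allowed to grow with $g$. In the compact case of \cite{LW21,WX21}, Mirzakhani--Zograf-type asymptotics suffice because $n$ is fixed; here the $n$-dependence must be tracked explicitly, which is the purpose of the appendix, and this dependence is precisely what dictates the three-term shape of the final bound.
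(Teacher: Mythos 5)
Your four-way decomposition into (i) simple non-separating, (ii) simple separating, (iii) non-simple, (iv) iterates $k\geqslant2$ is exactly the decomposition the paper uses, and your treatment of (i) (cancellation against $\hat f_T(i/2)$ via the two-variable volume ratio $V_{g-1,n+2}(0,\dots,0,x,x)/V_{g,n}=(2\sinh(x/2)/x)^2(1+O((n^2+nx^2)/g))$) and (iv) (exponential decay in $k$, split at $l_\gamma=1$, simplicity of short geodesics) match the paper. But there are two concrete problems in your account of where the error terms originate.

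First, you attribute the $c_1(\ep_1)(\log g)^{\beta+1}n^2 g^{1+4\ep_1}$ term to simple separating geodesics and say the $\ep_1$-slack comes from ``cross-volume ratio estimates of the appendix.'' This is wrong on both counts. The appendix estimate for the separating sum (Lemma \ref{Summing over}) is a clean $\ll n^2/g$ with no $\ep_1$, and after integrating against $\sinh(x/2)f_T(x)$ it produces $O(n^2 g)$; that is the \emph{first} error term, not the third. The $\ep_1$-dependence enters only through the filling-geodesic counting bound (Theorem \ref{Main Counting}, from \cite{WX21}), which is only invoked for non-simple geodesics.

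Second, and more seriously, your proposed treatment of (iii) would not work. You suggest handling $\mathcal{P}^{ns}(X)$ by a ``Birman--Series-type reduction, decomposing them into simple multicurves with exponentially small weighting in the self-intersection number.'' A non-simple primitive geodesic is not a multicurve, and Birman--Series bounds (on the sparsity of geodesics with bounded self-intersection) do not translate into a bound on $\sum_{\gamma\in\mathcal{P}^{ns}(X)}H_{X,1}(\gamma)$ that you could then feed into Mirzakhani's formula, since that formula only applies to functions built from simple multicurves. The paper's actual mechanism is: associate to $\gamma$ the subsurface $X(\gamma)\in\mathrm{Sub}_T(X)$ it fills, with $l(\partial X(\gamma))\leqslant 2l_\gamma(X)$ and $\mathrm{Vol}(X(\gamma))\leqslant 4l_\gamma(X)$ (Lemma \ref{Proposition- SUbsurface construction}); bound the multiplicity of $\gamma\mapsto X(\gamma)$ by the number of filling geodesics on $X(\gamma)$ (Theorem \ref{Main Counting}), which is the source of the $\ep_1$; and then apply Mirzakhani's formula to the \emph{simple boundary geodesics} of the subsurfaces, separating the cases $|\chi(Y)|\leqslant 33$ and $|\chi(Y)|\geqslant 34$ because the counting constant in Theorem \ref{Main Counting} is only controlled for bounded Euler characteristic. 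Without this reduction you have no route from non-simple geodesics to the integration formula, and your proposal has no replacement for it.
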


Throughout Section \ref{sec:Geometric-estimates} we shall always
have $n=O\left(g^{\alpha}\right)$ for fixed $0\leqslant\alpha<\frac{1}{2}$. 
\begin{rem}
The proof of Theorem \ref{Main Geometric Theorem} closely follows
\cite[Chapters 6 \& 7]{WX21}, making the necessary adaptations to
the case of surfaces with cusps. We therefore omit some arguments
that are identical in the compact and non-compact case and instead
refer the reader to the relevant place. 
\end{rem}

\subsection{Method}

We prove Theorem \ref{Main Geometric Theorem} by considering separately
the contribution of different types of geodesics. As in \cite{WX21},
we introduce the following notation. 
\begin{defn}
For $X\in\mathcal{M}_{g,n}$ we define 
\begin{enumerate}
\item $\mathcal{P}_{sep}^{s}(X)\stackrel{\text{def}}{=}\{\gamma\in\mathcal{P}(X)\mid\gamma\text{ is simple and separating}\}$.
\item $\mathcal{P}_{nsep}^{s}(X)\stackrel{\text{def}}{=}\{\gamma\in\mathcal{P}(X)\mid\gamma\text{ is simple and non-separating}\}$.
\item $\mathcal{P}^{ns}(X)\stackrel{\text{def}}{=}\{\gamma\in\mathcal{P}(X)\mid\gamma\text{ is non-simple}\}$.
\end{enumerate}
Notice that $\mathcal{P}(X)=\mathcal{P}_{sep}^{s}(X)\sqcup\mathcal{P}_{nsep}^{s}(X)\sqcup\mathcal{P}^{ns}(X)$.
We partition the sum $\sum_{\gamma\in\mathcal{P}(X)}\sum_{k=1}^{\infty}H_{X,k}(\gamma)$
as
\begin{align*}
\sum_{\gamma\in\mathcal{P}(X)}\sum_{k=1}^{\infty}H_{X,k}(\gamma)= & \sum_{\gamma\in\mathcal{P}(X)}H_{X,1}(\gamma)+\sum_{\gamma\in\mathcal{P}(X)}\sum_{k=2}^{\infty}H_{X,k}(\gamma)\\
= & \sum_{\gamma\in\mathcal{P}_{sep}^{s}(X)}H_{X,1}(\gamma)+\sum_{\gamma\in\mathcal{P}_{nsep}^{s}(X)}H_{X,1}(\gamma)+\sum_{\gamma\in\mathcal{P}^{ns}(X)}H_{X,1}(\gamma)\\
 & +\sum_{\gamma\in\mathcal{P}(X)}\sum_{k=2}^{\infty}H_{X,k}(\gamma).
\end{align*}
Subtracting $\hat{f}\left(\frac{i}{2}\right)$ and taking Weil-Petersson
expectations, we see 
\begin{align}
 & \mathbb{E}_{WP}^{g,n}\left[\sum_{\gamma\in\mathcal{P}(X)}\sum_{k=1}^{\infty}H_{X,k}(\gamma)-\hat{f_{T}}\left(\frac{i}{2}\right)\right]\nonumber \\
\leqslant & \underbrace{\mathbb{E}_{WP}^{g,n}\left[\sum_{\gamma\in\mathcal{P}_{sep}^{s}(X)}H_{X,1}(\gamma)\right]}_{(a)}+\underbrace{\left|\mathbb{E}_{WP}^{g,n}\left[\sum_{\gamma\in\mathcal{P}_{nsep}^{s}(X)}H_{X,1}(\gamma)\right]-\hat{f}\left(\frac{i}{2}\right)\right|}_{(b)}\nonumber \\
 & +\underbrace{\mathbb{E}_{WP}^{g,n}\left[\sum_{\gamma\in\mathcal{P}(X)}\sum_{k=2}^{\infty}H_{X,k}(\gamma)\right]}_{(c)}+\underbrace{\mathbb{E}_{WP}^{g,n}\left[\sum_{\gamma\in\mathcal{P}^{ns}(X)}H_{X,1}(\gamma)\right]}_{(d)}.\label{eq:Labelled formula}
\end{align}
\end{defn}

The remainder of this section is dedicated to bounding terms $(a)-(d)$,
from which Theorem \ref{Main Geometric Theorem} will follow. 
\begin{itemize}
\item Since terms $(a)$ and $(b)$ depend on simple geodesics, we can bound
them by applying Mirzakhani's integration formula directly. 
\item To bound $(c)$ we consider geodesics with length $<1$ and length
$\geqslant1$ separately. The contribution of geodesics with length
$\geqslant1$ can be bounded deterministically. Any geodesic with
length $<1$ must be simple, by e.g. \cite[Theorem 4.2.4]{Bu92},
so we can apply Mirzakhani's integration formula directly to bound
their contribution. 
\item To bound $(d)$, we cannot apply Mirzakhani's integration formula
directly since the geodesics are not simple. Instead, we pass from
non-simple geodesics to subsurfaces with simple geodesic boundary
and apply Mirzakhani's integration formula to the simple boundary
geodesics.
\end{itemize}

\subsection{Contribution of simple separating geodesics}

In this subsection we bound term $(a)$ in (\ref{eq:Labelled formula}),
the contribution of simple separating geodesics. In particular, we
prove the following.
\begin{lem}
\label{Lema contribution of simple and separating}
\[
\mathbb{E}_{WP}^{g,n}\left[\sum_{\gamma\in\mathcal{P}_{sep}^{s}(X)}H_{X,1}(\gamma)\right]\ll n^{2}g.
\]
\end{lem}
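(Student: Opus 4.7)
The plan is to apply Mirzakhani's integration formula (Theorem \ref{thm:MIF}) directly, since the simple separating curves on $S_{g,n}$ fall into finitely many mapping class group orbits classified by topological type. Each orbit $\mathcal{O}_\Gamma$ is indexed by a splitting $g = g_1+g_2$ together with a partition $\{1,\ldots,n\} = J_1 \sqcup J_2$ of the labeled cusps among the two complementary subsurfaces $S_{g_i,|J_i|+1}$ (excluding pairs that would produce a disc or once-punctured disc), up to the swap symmetry when the two sides have identical topological type.

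Setting $F(x) = \frac{x}{2\sinh(x/2)}\,f_T(x)$, I have $\sum_{\gamma \in \mathcal{P}_{sep}^{s}(X)} H_{X,1}(\gamma) = \sum_\Gamma F^\Gamma(X)$ over the admissible topological types. For each such $\Gamma$, Theorem \ref{thm:MIF} gives
\[
\int_{\mathcal{M}_{g,n}} F^\Gamma(X)\, dX \leq \int_0^T \frac{x^2}{2\sinh(x/2)}\, f_T(x) \, V_{g_1,|J_1|+1}(x,\underline{0}) \, V_{g_2,|J_2|+1}(x,\underline{0})\, dx,
\]
and I would then invoke the standard upper bound $V_{h,k+1}(x,\underline{0}) \leq V_{h,k+1}\cdot \frac{2\sinh(x/2)}{x}$ (due to Mirzakhani; see Appendix \ref{sec:Volume-estimates}) on both volume factors. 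After simplification the integrand collapses to $2\sinh(x/2)\, f_T(x)\, V_{g_1,|J_1|+1} V_{g_2,|J_2|+1}$. Since $\|f_T\|_\infty = \|f_1\|_\infty$ is a universal constant and $\int_0^T \sinh(x/2)\,dx = O(\cosh(T/2)) = O(g^2)$ for $T = 4\log g$, each topological type contributes at most $O(g^2) \cdot V_{g_1,|J_1|+1} V_{g_2,|J_2|+1}$ to the integral over $\mathcal{M}_{g,n}$.

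Dividing by $V_{g,n}$ and summing over topological types reduces the lemma to the uniform volume estimate
\[
\sum_{\substack{g_1+g_2=g \\ J_1 \sqcup J_2 = \{1,\ldots,n\}}} \frac{V_{g_1,|J_1|+1}\, V_{g_2,|J_2|+1}}{V_{g,n}} \ll \frac{n^2}{g},
\]
restricted to admissible types. I expect this to be the main obstacle: unlike the compact setting, the number of topological types proliferates with $n$, so the ratios must be controlled uniformly both across the genus splittings and across the distribution of the $n$ labeled cusps between the two sides. This is precisely the kind of estimate furnished by Appendix \ref{sec:Volume-estimates}; combining it with the $O(g^2)$ integral bound yields the claimed $O(n^2 g)$ bound on the expectation.
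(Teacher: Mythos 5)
Your proposal is correct and follows essentially the same route as the paper: Mirzakhani's integration formula over the topological types of simple separating curves (genus splitting together with a partition of the $n$ labeled cusps, contributing the $\binom{n}{j}$ multiplicity), the bound $V_{h,k+1}(x,\underline{0}_k) \le \frac{2\sinh(x/2)}{x}V_{h,k+1}$ from Lemma~\ref{lem:Sinh bound-1}, the integral estimate $\int_0^T \sinh(x/2)f_T(x)\,dx \ll e^{T/2} = g^2$, and the reduction to the volume-ratio sum which is exactly Lemma~\ref{Summing over} in the Appendix. You correctly identified that the sum $\sum \binom{n}{j}\frac{V_{i,j+1}V_{g-i,n-j+1}}{V_{g,n}} \ll n^2/g$ is the substantive ingredient and is supplied by the Appendix rather than something to rederive here.
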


\begin{proof}
We have 
\begin{equation}
\mathbb{E}_{WP}^{g,n}\left[\sum_{\gamma\in\mathcal{P}_{sep}^{s}(X)}H_{X,1}(\gamma)\right]=\frac{1}{V_{g,n}}\int_{\mathcal{M}_{g,n}}\sum_{\gamma\in\mathcal{P}_{sep}^{s}(X)}H_{X,1}(\gamma)dX.\label{eq:simple sep 1}
\end{equation}
We shall apply Mirzakhani's integration formula, Theorem \ref{thm:MIF},
to bound the integral in (\ref{eq:simple sep 1}). Recall that $S_{g,n}$
is a topological surface with genus $g$ and $n$ labeled punctures.
For $0\leqslant i\leqslant\lfloor\frac{g}{2}\rfloor,$ $0\leqslant j\leqslant n,$
let $\alpha_{i,j}$ be a simple closed curve in $S_{g,n}$ which separates
$S_{g,n}$ into subsurfaces $S_{i,j+1}$ and $S_{g-i,n-j+1}$, each
with one boundary component and $j$ and $n-j$ punctures respectively.
Then $\alpha_{i,j}$ partitions the punctures into two disjoint subsets
$I$ and $J$ of size $j$ and $n-j$ respectively. Let $\left[\alpha_{i,j}\right]$
denote the homotopy class of $\alpha_{i,j}$. The orbit $\text{MCG}_{g,n}\cdot\left[\alpha_{i,j}\right]$
is determined by the set $\left\{ \left(i,j+1,I\right),\left(g-i,n-j+1,J\right)\right\} $,
since the mapping class group does not permute the punctures. Therefore
given $i$ and $j$, there are ${n \choose j}$ $\text{MCG}_{g,n}$-orbits
of simple separating closed curves on $S_{g,n}$ which separate off
a subsurface with genus $i$ and with $j$ punctures. Recalling that
\[
H_{X,1}(\gamma)=\frac{l_{\gamma}\left(X\right)}{2\sinh\left(\frac{l_{\gamma}(X)}{2}\right)}f_{T}\left(l_{\gamma}\left(X\right)\right),
\]
 we now apply Mirzakhani's integration formula, Theorem \ref{thm:MIF},
to see 
\begin{align*}
 & \frac{1}{V_{g,n}}\int_{\mathcal{M}_{g,n}}\sum_{\gamma\in\mathcal{P}_{sep}^{s}(X)}H_{X,1}(\gamma)dX\\
\leqslant & \sum_{\substack{0\leqslant i\leqslant g,\text{}0\leqslant j\leqslant n\\
2\leqslant2i+j\leqslant2g+n-2
}
}\int_{0}^{\infty}{n \choose j}\frac{x^{2}}{\sinh\left(\frac{x}{2}\right)}f_{T}(x)\frac{V_{i,j+1}\left(\underline{0}_{j},x\right)V_{g-i,n-j+1}\left(\underline{0}_{n-j},x\right)}{V_{g,n}}dx.
\end{align*}
By Lemma \ref{lem:Sinh bound-1},
\begin{align*}
V_{a,b}(\underline{0}_{b-1},x) & \leqslant\frac{2\sinh\left(\frac{x}{2}\right)}{x}V_{a,b},
\end{align*}
giving 
\begin{align*}
 & \frac{1}{V_{g,n}}\int_{\mathcal{M}_{g,n}}\sum_{\gamma\in\mathcal{P}_{sep}^{s}(X)}H_{X,1}(\gamma)dX\\
\leqslant & \frac{4}{V_{g,n}}\left(\sum_{\substack{0\leqslant i\leqslant g,\text{}0\leqslant j\leqslant n\\
2\leqslant2i+j\leqslant2g+n-2
}
}{n \choose j}\cdot\frac{V_{i,j+1}V_{g-i,n-j+1}}{V_{g,n}}\right)\int_{0}^{\infty}\sinh\left(\frac{x}{2}\right)f_{T}(x)dx.
\end{align*}
Since $f_{T}$ is bounded independently of $T$ and supported in $[0,T)$,
we see 
\[
\int_{0}^{\infty}\sinh\left(\frac{x}{2}\right)f_{T}(x)dx\ll e^{\frac{T}{2}}.
\]
By Lemma \ref{Summing over},
\[
\sum_{\substack{0\leqslant i\leqslant g,\text{}0\leqslant j\leqslant n\\
2\leqslant2i+j\leqslant2g+n-2
}
}\frac{n!}{j!\left(n-j\right)!}\cdot\frac{V_{i,j+1}V_{g-i,n-j+1}}{V_{g,n}}\ll\frac{n^{2}}{g},
\]
giving 
\begin{align*}
\mathbb{E}_{WP}^{g,n}\left[\sum_{\gamma\in\mathcal{P}_{sep}^{s}(X)}H_{X,1}(\gamma)\right] & \ll\frac{n^{2}}{g}\cdot e^{\frac{T}{2}}\ll n^{2}g,
\end{align*}
as claimed.
\end{proof}

\subsection{Contribution of simple non-separating geodesics}

In this subsection we deal with the contribution of simple non-separating
geodesics (term $(b)$ in (\ref{eq:Labelled formula})). We prove
the following. 
\begin{lem}
\label{contribution of cimple and non-separating}
\[
\left|\mathbb{E}_{WP}^{g,n}\left[\sum_{\gamma\in\mathcal{P}_{nsep}^{s}(X)}H_{X,1}(\gamma)\right]-\hat{f}\left(\frac{i}{2}\right)\right|\ll n^{2}g+n\cdot\log\left(g\right)^{2}\cdot g.
\]
\end{lem}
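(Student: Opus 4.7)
The plan is to apply Mirzakhani's integration formula (Theorem~\ref{thm:MIF}) to the single $\mathrm{MCG}_{g,n}$-orbit of simple non-separating closed curves on $S_{g,n}$. Cutting $S_{g,n}$ along such a curve yields a connected surface of signature $(g-1, n+2)$, where the two newly created boundary components (from the two sides of the curve) both have length equal to that of $\gamma$. Since $g>2$, the constant $C_\Gamma = \tfrac{1}{2}$, so Theorem~\ref{thm:MIF} gives
\[
\mathbb{E}_{WP}^{g,n}\!\left[\sum_{\gamma \in \mathcal{P}_{nsep}^s(X)} H_{X,1}(\gamma)\right] \;=\; \frac{1}{4 V_{g,n}} \int_0^\infty \frac{x^2 f_T(x)}{\sinh(x/2)}\, V_{g-1,n+2}(x, x, \underline{0}_n) \, dx.
\]

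The next step is to compare the right-hand side with $\hat f_T(\tfrac{i}{2}) = 2\int_0^\infty f_T(x)\cosh(x/2)\,dx$. Following \cite[Section~6]{WX21}, I would exploit the polynomial expansion of $V_{g-1,n+2}(x,x,\underline{0}_n)$ in $x^2$ together with the volume-ratio estimates proved in the Appendix, writing $V_{g-1,n+2}(x,x,\underline{0}_n)/V_{g,n}$ as a dominant term whose integration against $\tfrac{x^2}{4\sinh(x/2)}$ reproduces $\hat f_T(\tfrac{i}{2})$ exactly, plus an error to be controlled.

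The error should split into two pieces matching the two terms in the claimed bound. The $n^2 g$ contribution should come from combinatorial corrections associated with the $n$ labelled punctures: in the regime $n=O(g^\alpha)$ these produce corrections of order $O(n^2/g)$ to $V_{g-1,n+2}/V_{g,n}$, which after multiplication by the $O(g^2)$ integral scale yield $n^2 g$. The $n\log(g)^2 g$ contribution should come from the polynomial error in the $x$-expansion of $V_{g-1, n+2}(x,x,\underline{0}_n)$ integrated over $x\in[0,T]$ with $T=4\log g$, together with a linear-in-$n$ dependence entering through the $n$ vanishing boundary-length coordinates.

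The main obstacle will be to establish the volume-ratio comparison for $V_{g-1,n+2}(x,x,\underline{0}_n)/V_{g,n}$ with sufficiently sharp error dependence on both $g$ and $n$ uniformly for $x\in[0,4\log g]$. Unlike the compact case of \cite{WX21}, here the corrections from growing $n$ must be incorporated without deteriorating the genus-$g$ asymptotics, and this is precisely what the volume estimates of the Appendix are designed to provide.
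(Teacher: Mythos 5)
Your overall strategy is the paper's: apply Mirzakhani's integration formula to the unique $\mathrm{MCG}_{g,n}$-orbit of a simple non-separating curve, write $V_{g-1,n+2}(\underline{0}_n,x,x)/V_{g,n}$ as its leading term times $1+O\big((n^2+nx^2)/g\big)$ using the Appendix estimates (Lemma \ref{lem:Sinh bound-1} and Theorem \ref{Variable n asymptotic}), observe the leading term nearly cancels $\hat f_T(\tfrac{i}{2})$, and bound what remains. But there is a concrete error in your normalization that, if taken at face value, destroys the cancellation the lemma relies on.

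The set $\mathcal{P}(X)$ in this paper consists of \emph{oriented} primitive closed geodesics, so each unoriented simple non-separating geodesic is counted twice in $\sum_{\gamma\in\mathcal{P}_{nsep}^s(X)}H_{X,1}(\gamma)$. Mirzakhani's formula is applied to the sum over the orbit of the (unoriented) curve $\alpha_0$, which produces, with $C_\Gamma=\tfrac12$, the coefficient $\tfrac{1}{4V_{g,n}}$; you must then multiply by $2$ for orientation to get $\tfrac{1}{2V_{g,n}}$. Your display keeps $\tfrac{1}{4V_{g,n}}$. With that coefficient the leading term of the integral equals $\int_0^\infty \sinh(x/2)f_T(x)\,dx \approx \tfrac12\hat f_T(\tfrac{i}{2})$, and the left-hand side of the lemma would then be $\big|\tfrac12\hat f_T(\tfrac{i}{2})-\hat f_T(\tfrac{i}{2})+O(\cdot)\big|$, which is of order $g^2$ — far larger than the claimed $n^2 g+n(\log g)^2 g$. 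With the correct $\tfrac{1}{2V_{g,n}}$ the leading term is $2\int_0^\infty\sinh(x/2)f_T(x)\,dx$, whose difference from $\hat f_T(\tfrac{i}{2})=2\int_0^\infty\cosh(x/2)f_T(x)\,dx$ is $\int_0^\infty e^{-x/2}f_T(x)\,dx=O(1)$, and the argument goes through. One further small clarification: the leading term does not reproduce $\hat f_T(\tfrac{i}{2})$ \emph{exactly} as you say, but up to this $O(1)$ discrepancy coming from $\cosh-\sinh=\tfrac12 e^{-x/2}$; that discrepancy is harmless but should be acknowledged rather than absorbed silently.

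Apart from this, your description of where each piece of the error bound comes from is essentially right: the $O(n^2/g)$ correction in $V_{g-1,n+2}/V_{g,n}$ (Theorem \ref{Variable n asymptotic}) contributes $n^2 g$ after multiplying by the $e^{T/2}=g^2$ scale of the integral, and the $O(nx^2/g)$ correction from the two nonzero boundary lengths in Lemma \ref{lem:Sinh bound-1}, integrated against $\sinh(x/2)f_T(x)$ over $x\in[0,T]$ with $T=4\log g$, contributes $n(\log g)^2 g$.
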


\begin{proof}
Let $\alpha_{0}$ be an unoriented simple non-separating closed curve
in $S_{g,n}$. There is just one $\text{MCG}_{g,n}$-orbit of simple
non-separating closed curves on $S_{g,n}$ and we have 
\[
\sum_{\gamma\in\mathcal{P}_{nsep}^{s}(X)}H_{X,1}(\gamma)dX=2\sum_{\gamma\in\text{MCG}_{g,n}\cdot\alpha_{0}}H_{X,1}(\gamma),
\]
where the factor of $2$ occurs since geodesics in $\mathcal{P}\left(X\right)$
are oriented. Applying Mirzakhani's integration formula, we get 
\begin{align*}
\int_{\mathcal{M}_{g,n}}\sum_{\gamma\in\mathcal{P}_{nsep}^{s}(X)}H_{X,1}(\gamma)dX & =\frac{1}{2}\int_{0}^{\infty}\frac{x^{2}}{\sinh(\frac{x}{2})}f_{T}(x)V_{g-1,n+2}\left(\underline{0}_{n},x,x\right)dx,
\end{align*}
where the factor $\frac{1}{2}$ occurs since $\alpha_{0}$ is simple
and non-separating, c.f. Theorem \ref{thm:MIF}. By Theorem \ref{Variable n asymptotic},
\[
V_{g-1,n+2}=V_{g,n}\cdot\left(1+O\left(\frac{n^{2}}{g}\right)\right).
\]
Then we have, by applying Lemma \ref{lem:Sinh bound-1},
\[
\frac{V_{g-1,n+2}\left(\underline{0}_{n},x,x\right)}{V_{g,n}}=\left(\frac{2\sinh\frac{x}{2}}{x}\right)^{2}\left(1+O\left(\frac{n^{2}+nx^{2}}{g}\right)\right).
\]
This gives 
\begin{align*}
 & \frac{1}{V_{g,n}}\int_{\mathcal{M}_{g,n}}\sum_{\gamma\in\mathcal{P}_{nsep}^{s}(X)}\frac{l_{\gamma}(X)}{\sinh\left(\frac{l_{\gamma}(X)}{2}\right)}f_{T}\left(l_{\gamma}\left(X\right)\right)dX\\
 & =\int_{0}^{T}2\sinh\left(\frac{x}{2}\right)f_{T}(x)\left(1+O\left(\frac{n^{2}+nx^{2}}{g}\right)\right)dx.
\end{align*}
Since $\hat{f_{T}}\left(\frac{i}{2}\right)$ is even,
\[
\hat{f_{T}}\left(\frac{i}{2}\right)=\int_{0}^{T}2\cosh\left(\frac{x}{2}\right)f_{T}(x)dx,
\]
and we have 
\begin{align*}
 & \left|\mathbb{E}_{WP}^{g,n}\left[\sum_{\gamma\in\mathcal{P}_{nsep}^{s}(X)}H_{X,1}(\gamma)\right]-\hat{f_{T}}\left(\frac{i}{2}\right)\right|\\
= & \left|\int_{0}^{T}2\sinh\left(\frac{x}{2}\right)f_{T}(x)\left(1+O\left(\frac{1+n^{2}+nx^{2}}{g}\right)\right)dx-\int_{0}^{T}2\cosh\left(\frac{x}{2}\right)f_{T}(x)dx\right|\\
\ll & \left|\int_{0}^{T}2\left(\sinh\left(\frac{x}{2}\right)-\cosh\left(\frac{x}{2}\right)\right)\cdot f_{T}(x)dx\right|+\left|\int_{0}^{T}2\sinh\left(\frac{x}{2}\right)f_{T}(x)\left(\frac{n^{2}+nx^{2}}{g}\right)dx\right|.
\end{align*}
Using that $2\left(\cosh\left(\frac{x}{2}\right)-\sinh\left(\frac{x}{2}\right)\right)=e^{-x}$,
\[
\left|\int_{0}^{T}2\left(\sinh\left(\frac{x}{2}\right)-\cosh\left(\frac{x}{2}\right)\right)\cdot f_{T}(x)dx\right|\ll1.
\]
Recalling $T=4\log g$, we calculate 
\begin{align*}
\left|\int_{0}^{T}2\sinh\left(\frac{x}{2}\right)f_{T}(x)\left(\frac{1+n^{2}+n^{2}x}{g}\right)dx\right| & \ll\frac{e^{\frac{T}{2}}\left(n^{2}+nT^{2}\right)}{g}\ll n^{2}g+n\cdot\log\left(g\right)^{2}\cdot g,
\end{align*}
and 
\[
\left|\mathbb{E}_{WP}^{g,n}\left[\sum_{\gamma\in\mathcal{P}_{nsep}^{s}(X)}H_{X,1}(\gamma)\right]-\hat{f_{T}}\left(\frac{i}{2}\right)\right|\ll n^{2}g+n\cdot\log\left(g\right)^{2}\cdot g,
\]
as claimed.
\end{proof}

\subsection{Iterates of primitive geodesics}

We now look at the contribution of iterates of primitive geodesics
(term $(c)$ in (\ref{eq:Labelled formula})). The aim of this subsection
is to prove the following.
\begin{lem}
\label{lemma- contribution of iterates}
\[
\mathbb{E}_{WP}^{g,n}\left[\sum_{\gamma\in\mathcal{P}(X)}\sum_{k=2}^{\infty}H_{X,k}(\gamma)\right]\ll\log\left(g\right)^{2}\cdot g.
\]
\end{lem}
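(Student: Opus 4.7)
The plan is to split the double sum $\sum_{\gamma\in\mathcal{P}(X)}\sum_{k=2}^{\infty}H_{X,k}(\gamma)$ according to whether $l_{\gamma}(X)<1$ or $l_{\gamma}(X)\geqslant 1$, as indicated in the method outline at the beginning of Section \ref{sec:Geometric-estimates}. The two ranges will be treated by completely different techniques: the short range by Mirzakhani's integration formula, the long range by a uniform deterministic estimate.

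For $l_{\gamma}<1$, Buser's collar lemma (\cite[Theorem 4.2.4]{Bu92}) guarantees that $\gamma$ is simple, so I may further split into simple non-separating and simple separating contributions. For each fixed $k\geqslant 2$, apply Theorem \ref{thm:MIF} to the cut-off test function $F_{k}(x)=\frac{x\,f_{T}(kx)}{2\sinh(kx/2)}\mathbf{1}_{[0,1)}(x)$, exactly as in the proofs of Lemmas \ref{Lema contribution of simple and separating} and \ref{contribution of cimple and non-separating}. Lemma \ref{lem:Sinh bound-1} converts $V_{g-1,n+2}(\underline{0}_{n},x,x)$ and $V_{i,j+1}(\underline{0}_{j},x)V_{g-i,n-j+1}(\underline{0}_{n-j},x)$ into $(2\sinh(x/2)/x)^{2}$ times the zero-boundary volumes, and the volume-ratio asymptotics of Appendix \ref{sec:Volume-estimates} reduce each integrand to a bounded multiple of $4\sinh^{2}(x/2)/\sinh(kx/2)$. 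For $x\in[0,1]$ and $k\geqslant 2$ this integrand is of size $O(x/k)$ when $kx<1$ and $O(x^{2}e^{-kx/2})$ when $kx\geqslant 1$, so the $x$-integral is $O(k^{-3})$; summing in $k$ yields a convergent total. Combining with the separating count from Lemma \ref{Summing over}, the short contribution is $O(1+n^{2}/g)$, comfortably inside $\log(g)^{2}g$.

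For $l_{\gamma}\geqslant 1$ the Weil--Petersson expectation plays no role: I will bound the sum deterministically, pointwise on $\mathcal{M}_{g,n}$. The inequality $\sinh(kl_{\gamma}/2)\geqslant\tfrac{1}{4}e^{kl_{\gamma}/2}$ (valid for $kl_{\gamma}\geqslant 2$) together with $\|f_{T}\|_{\infty}=f_{1}(0)$ gives
\[
\sum_{k=2}^{\infty}H_{X,k}(\gamma)\;\leqslant\;2f_{1}(0)\,l_{\gamma}\sum_{k=2}^{\infty}e^{-kl_{\gamma}/2}\;\ll\;l_{\gamma}e^{-l_{\gamma}}.
\]
Since $f_{T}$ has support in $(-T,T)$ with $T=4\log g$, it then suffices to bound $\sum_{\gamma\in\mathcal{P}(X),\,1\leqslant l_{\gamma}\leqslant T}l_{\gamma}e^{-l_{\gamma}}$ uniformly over $X\in\mathcal{M}_{g,n}$. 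This is achieved by a Huber--Margulis-type universal counting estimate of the shape $\#\{\gamma\in\mathcal{P}(X):L\leqslant l_{\gamma}<L+1\}\ll\mathrm{Vol}(X)e^{L}$, which after dyadic (Abel) summation produces $\sum l_{\gamma}e^{-l_{\gamma}}\ll T^{2}\mathrm{Vol}(X)\sim g\log(g)^{2}$. Adding the two ranges proves the lemma.

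The main obstacle is securing the uniform counting bound in the long regime: although such estimates follow easily from Selberg's trace formula in the compact case, here the trace formula contains extra continuous-spectrum contributions, so either those terms must be controlled or one must run a purely geometric argument (for instance via pairwise-disjoint collars of primitive geodesics, carried out inside the compact core $D$ of Definition \ref{def:compact part}, and using $\mathrm{Vol}(X)=O(g)$). Once this deterministic input is in place, the short-range computation is a routine adaptation of the Mirzakhani-based arguments already used in Lemmas \ref{Lema contribution of simple and separating} and \ref{contribution of cimple and non-separating}, with the extra $\sinh(kx/2)^{-1}$ providing more than enough decay in $k$.
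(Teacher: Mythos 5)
Your proposal follows essentially the same route as the paper. You split at the length threshold $1$, observe that short primitives are necessarily simple so that Mirzakhani's integration formula applies, and bound the long range deterministically by a universal geodesic-counting estimate proved via collar arguments; the paper does exactly this, deriving the counting bound $\#\{\gamma\in\mathcal{P}(X)\mid 1\le l_\gamma(X)\le L\}\ll ge^{L}$ from \cite[Lemma 6.6.4]{Bu92} together with the noncompact Collar Theorem \cite[Lemma 4.4.6]{Bu92}, which is the geometric step you flag as the main obstacle. The only cosmetic difference is on the short range: you push the $x$--integral with the full kernel $x f_T(kx)/(2\sinh(kx/2))$ through MIF and sum the resulting $O(k^{-3})$ terms, whereas the paper uses the cruder pointwise bound $H_{X,k}(\gamma)\le f_1(0)$ with at most $T/l_\gamma$ nonzero terms in $k$, trading precision for brevity; both give a contribution far below the $g(\log g)^2$ coming from the long range, so this is not a genuinely different argument.
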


In order to prove Lemma \ref{lemma- contribution of iterates}, we
need the following soft geodesic counting bound.
\begin{lem}
For any $X\in\mathcal{M}_{g,n}$ and any $L>0$ we have 
\[
\#\{\gamma\in\mathcal{P}\left(X\right)\mid1\leqslant l_{\gamma}\left(X\right)\leqslant L\}\ll ge^{L}.
\]
\end{lem}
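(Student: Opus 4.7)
The plan is to deduce this bound from a classical uniform geodesic counting estimate: for any finite-area hyperbolic surface $Y$ and any $L\geqslant 1$,
\[
\#\{\gamma\in\mathcal{P}(Y):l_\gamma(Y)\leqslant L\}\ll\mathrm{Area}(Y)\cdot e^{L},
\]
with implicit constant universal. Granted this, by Gauss--Bonnet $\mathrm{Area}(X)=2\pi(2g-2+n)$, and the standing hypothesis $n=O(g^{\alpha})$ with $\alpha<\frac{1}{2}$ gives $\mathrm{Area}(X)\ll g$, so the lemma follows immediately.

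For the uniform Huber-type bound I would use a packing argument in the thick part of $X$. A primitive closed geodesic $\gamma$ with $l_\gamma\geqslant 1$ cannot be confined to a Margulis collar (the only closed geodesics inside such a collar are iterates of its core curve, which has length below the universal Margulis constant) nor to a cusp (which contains no closed geodesics at all), so $\gamma$ necessarily meets the thick part. Pick $p_\gamma\in\gamma$ with injectivity radius at $p_\gamma$ bounded below by a universal constant $\iota_{0}>0$. At any such point $p$, the number of primitive closed geodesics of length $\leqslant L$ whose trace meets $B(p,\iota_{0}/4)$ is bounded above by
\[
\#\bigl\{g\in\Gamma_X\setminus\{\mathrm{Id}\}:d(\tilde p,g\tilde p)\leqslant L+\iota_{0}/2\bigr\}\ll e^{L},
\]
the final estimate coming from comparing $\mathrm{Area}(B_{\mathbb{H}}(\tilde p,L+\iota_{0}))\ll e^{L}$ with the disjoint injectivity balls $B_{\mathbb{H}}(g\tilde p,\iota_{0}/2)$. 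Double counting the quantity $\int_X\sum_\gamma \mathbf{1}[p_\gamma\in B(p,\iota_{0}/4)]\,d\mu(p)$ then yields $\#\{\gamma:l_\gamma\leqslant L\}\cdot a_{0}\ll\mathrm{Area}(X)\cdot e^{L}$, where $a_{0}>0$ is a universal lower bound for the area of $B(p_\gamma,\iota_{0}/4)$ intersected with the thick part.

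The principal technical point is securing the universal Margulis constant $\iota_{0}>0$ and ensuring all pieces of the packing argument are uniform in the surface $X$; this is classical, and relies only on the standard thick--thin decomposition together with a volume comparison in $\mathbb{H}$. The hypothesis $l_\gamma\geqslant 1$ is essential exactly to force the geodesic to meet the thick part, where the packing argument has teeth; shorter geodesics are anyway controlled (and bounded in number by $3g-3+n\ll g$) via the collar lemma, but fall outside the scope of this particular counting estimate.
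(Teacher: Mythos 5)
Your argument is correct and matches the paper's approach in substance: both rest on the fact that a primitive geodesic of length at least $1$ must meet the thick part, where a Milnor--Margulis-type packing/volume-comparison argument gives the counting bound $\ll\mathrm{Area}(X)\cdot e^{L}$, and then Gauss--Bonnet supplies $\mathrm{Area}(X)\ll g$. The paper reaches the same estimate by citing the adaptation of \cite[Lemma 6.6.4]{Bu92} (whose proof is precisely the packing argument you reconstruct) together with the non-compact Collar Lemma \cite[Lemma 4.4.6]{Bu92} to handle the short primitive geodesics separately, so the two proofs differ only in that you have spelled out the argument from scratch rather than invoking Buser.
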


\begin{proof}
Let $\#_{0}(X,L)$ denote the number of closed geodesics on $X$ with
length $\leqslant L$ which are not iterates of closed geodesics of
length $\leqslant2\text{arcsinh}(1)$. An immediate adaptation of
the proof of \cite[Lemma 6.6.4]{Bu92} using the non-compact version
of the Collar Theorem (\cite[Lemma 4.4.6]{Bu92}) tells us that 
\[
\#_{0}(X,L)\leqslant\left(g-1+\frac{n}{2}\right)e^{L+6}.
\]
 \cite[Lemma 4.4.6]{Bu92} also tells us that the number of primitive
geodesics on $X$ with length $\leqslant4\text{arcsinh}(1)$ is bounded
above by $3g-3+n$. Using that $n=o\left(\sqrt{g}\right)$, we conclude
that 
\begin{align*}
\#\{\gamma\in\mathcal{P}\left(X\right)\mid1\leqslant l_{\gamma}\left(X\right)\leqslant L\}\leqslant & \left(g-1+\frac{n}{2}\right)e^{L+6}+3g-3+n\ll ge^{L},
\end{align*}
as claimed.
\end{proof}
We now proceed with the proof of Lemma \ref{lemma- contribution of iterates}.
\begin{proof}[Proof of Lemma \ref{lemma- contribution of iterates}]
Let $X\in\mathcal{M}_{g,n}$. We write 
\[
\sum_{\gamma\in\mathcal{P}(X)}\sum_{k=2}^{\infty}H_{X,k}(\gamma)=\sum_{\{\gamma\in\mathcal{P}(X)\mid l_{\gamma}(X)<1\}}\sum_{k=2}^{\infty}H_{X,k}(\gamma)+\sum_{\{\gamma\in\mathcal{P}(X)\mid l_{\gamma}(X)\geqslant1\}}\sum_{k=2}^{\infty}H_{X,k}(\gamma).
\]
 By Lemma \ref{lemma- contribution of iterates},
\[
\#\{\gamma\in\mathcal{P}\left(X\right)\mid1\leqslant l_{\gamma}\left(X\right)\leqslant L\}\ll ge^{L}.
\]
 We then have
\begin{align*}
\sum_{\{\gamma\in\mathcal{P}(X)\mid l_{\gamma}(X)\geqslant1\}}\sum_{k=2}^{\infty}H_{X,k}(\gamma) & \ll\sum_{\{\gamma\in\mathcal{P}(X)\mid1\leqslant l_{\gamma}(X)\leqslant\frac{T}{2}\}}l_{\gamma}\left(X\right)e^{-l_{\gamma}\left(X\right)}\\
 & \leqslant\sum_{m=1}^{\lfloor\frac{T}{2}\rfloor}me^{-m}\cdot\#\{\gamma\in\mathcal{P}\left(X\right)\mid m\leqslant l_{\gamma}\left(X\right)\leqslant m+1\}\\
 & \ll g\sum_{m=1}^{\lfloor\frac{T}{2}\rfloor}m\ll\left(\log g\right)^{2}\cdot g.
\end{align*}
Taking Weil-Petersson expectations, we see 
\begin{align}
\mathbb{E}_{WP}^{g,n}\left[\sum_{\gamma\in\mathcal{P}(X)}\sum_{k=2}^{\infty}H_{X,k}(\gamma)\right]= & \mathbb{E}_{WP}^{g,n}\left[\sum_{\{\gamma\in\mathcal{P}(X)\mid l_{\gamma}(X)<1\}}\sum_{k=2}^{\infty}H_{X,k}(\gamma)\right]\nonumber \\
 & +O\left(\left(\log g\right)^{2}g\right).\label{eq:Non primitive eq1}
\end{align}
For each $\gamma\in\mathcal{P}(X)$,
\begin{align*}
H_{X,k}(\gamma) & =\frac{l_{\gamma}\left(X\right)}{2\sinh\left(\frac{kl_{\gamma}(x)}{2}\right)}f_{T}\left(kl_{\gamma}\left(X\right)\right)\leqslant f\left(0\right),
\end{align*}
and if $k\geqslant\frac{T}{l_{\gamma}(X)}$ then $f_{T}\left(kl_{\gamma}\left(X\right)\right)=0$.
This tells us that

\begin{equation}
\mathbb{E}_{WP}^{g,n}\left[\sum_{\{\gamma\in\mathcal{P}(X)\mid l_{\gamma}(X)<1\}}\sum_{k=2}^{\infty}H_{X,k}(\gamma)\right]\leqslant f(0)\cdot T\cdot\mathbb{E}_{WP}^{g,n}\left[\sum_{\{\gamma\in\mathcal{P}(X)\mid l_{\gamma}(X)<1\}}\frac{1}{l_{\gamma}(X)}\right].\label{non primitive eq 2}
\end{equation}
It remains to bound 
\[
\mathbb{E}_{WP}^{g,n}\left[\sum_{\{\gamma\in\mathcal{P}(X)\mid l_{\gamma}(X)<1\}}\frac{1}{l_{\gamma}(X)}\right].
\]
Any geodesic $\gamma\in\mathcal{P}(X)$ with length $l_{\gamma}(X)\leqslant1<4\text{arcsinh}1$
must be simple by e.g. \cite[Theorem 4.2.4]{Bu92}. Therefore we can
apply Mirzakhani's integration formula to get 
\begin{align}
\mathbb{E}_{WP}^{g,n}\left[\sum_{\{\gamma\in\mathcal{P}(X)\mid l_{\gamma}(X)<1\}}\frac{1}{l_{\gamma}(X)}\right] & \leqslant\frac{1}{V_{g,n}}\int_{0}^{1}V_{g-1,n+2}(\underline{0}_{n},t,t)dt\nonumber \\
 & +\sum_{\substack{0\leqslant i\leqslant g,\text{}0\leqslant j\leqslant n\\
2\leqslant2i+j\leqslant2g+n-2
}
}\frac{n!}{j!\left(n-j\right)!}\cdot\frac{V_{i,j+1}V_{g-i,n-j+1}}{V_{g,n}}\nonumber \\
 & \ll\frac{V_{g-1,n+2}}{V_{g,n}}+\frac{n^{2}}{g}\ll1,\label{non iterate eq 3}
\end{align}
where on the last line we applied Lemma \ref{Summing over} and Theorem
\ref{Variable n asymptotic}. Thus combining (\ref{eq:Non primitive eq1}),
(\ref{non primitive eq 2}) and (\ref{non iterate eq 3}) we see 
\[
\mathbb{E}_{WP}^{g,n}\left[\sum_{\gamma\in\mathcal{P}(X)}\sum_{k=2}^{\infty}H_{X,k}(\gamma)\right]\ll\left(\log g\right)^{2}\cdot g,
\]
as required.
\end{proof}

\subsection{Non-simple geodesics}

We now need to deal with the contribution of the non-simple primitive
geodesics, (term $(d)$ in (\ref{eq:Labelled formula})). In this
subsection we shall prove the following 
\begin{lem}
\label{contribution of non-simple}There is a constant $\beta_{1}>0$
such that for any $\ep_{1}>0$ there is a constant $c_{1}\left(\ep_{1}\right)>0$
such that 
\[
\mathbb{E}_{WP}^{g,n}\left[\sum_{\gamma\in\mathcal{P}^{ns}(X)}H_{X,1}(\gamma)\right]\ll\left(\log g\right)^{6}\cdot g+c_{1}\left(\ep_{1}\right)\left(\log g\right)^{\beta_{1}}\cdot n^{2}\cdot g^{1+4\ep_{1}}.
\]
\end{lem}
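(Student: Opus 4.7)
The plan is to adapt the approach of Wu--Xue \cite{WX21} for compact surfaces to the non-compact setting. The central difficulty is that Mirzakhani's integration formula only applies to simple closed curves, so for each $\gamma\in\mathcal{P}^{ns}(X)$ of length at most $T=4\log g$ I will first pass to its filling subsurface $\Sigma(\gamma)\subset X$: a compact subsurface whose boundary is a union of simple closed geodesics and in whose interior $\gamma$ fills.

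First, I stratify the sum according to the topological type of $\Sigma(\gamma)$, recorded by a triple $(g',n'_c,n'_b)$ where $g'$ is the genus, $n'_c$ is the number of cusps of $X$ contained in $\Sigma(\gamma)$, and $n'_b$ is the number of simple geodesic boundary components. Since $l_\gamma(X)\leqslant T$ and $\gamma$ fills $\Sigma(\gamma)$, a standard collar/Euler-characteristic estimate gives $2g'-2+n'_c+n'_b\ll\log g$, so only polynomially in $\log g$ many topological types contribute. For each fixed type, counting the $\mathrm{MCG}_{g,n}$-orbits of embeddings of the model subsurface into $S_{g,n}$ produces a combinatorial factor that includes the binomial $\binom{n}{n'_c}$ for the choice of the cusps of $X$ swallowed by $\Sigma$; this choice is the source of the $n^2$ factor (after accounting for both $\Sigma$ and its complement).

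Next, within a fixed type I invoke the Wu--Xue counting bound for filling primitive closed geodesics of length $\leqslant T$ on a surface of signature $(g',n'_c+n'_b)$ with boundary lengths $\underline{x}$: the sum of $H_{Y,1}(\gamma)$ over such $\gamma$ is controlled pointwise by a function $F(\underline{x},T)$ whose growth in $T$ is polynomial and which integrates well against Weil--Petersson volumes. Applying Mirzakhani's integration formula to the boundary multicurve of $\Sigma(\gamma)$ then reduces the expectation to a sum, over topological types, of integrals of the shape
\[
\binom{n}{n'_c}\int_{\mathbb{R}_{\geqslant 0}^{n'_b}} F(\underline{x},T)\,\frac{V_{g',n'_c+n'_b}(\underline{0}_{n'_c},\underline{x})\cdot V_{\mathrm{complement}}(\underline{x})}{V_{g,n}}\,x_1\cdots x_{n'_b}\,d\underline{x},
\]
with obvious modifications when $X\setminus\Sigma(\gamma)$ is disconnected and its components absorb the remaining $n-n'_c$ cusps among themselves. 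These integrals are then bounded using the volume ratio estimates of the Appendix: the upper bound $V_{a,b}(\underline{0},\underline{x})\ll\prod_i (\sinh(x_i/2)/x_i)\cdot V_{a,b}$ together with asymptotics of the form $V_{g-g',n-n'_c+n'_b}/V_{g,n}=1+O(n^2/g)$ allow the $e^{T/2}$ factors produced by $\sinh(x/2)/x$ to be absorbed into the volume cancellation.

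The main obstacle will be the combinatorial and analytic bookkeeping: handling the possibly disconnected complement $X\setminus\Sigma(\gamma)$ with its distribution of the remaining cusps, uniformly controlling all the binomial and volume-ratio factors across topological types when $n=O(g^\alpha)$, and ensuring that the product of the $e^T\sim g^4$ growth from geodesic counting with the surviving $g^{-3+O(\ep_1)}$ from the volume ratios produces the stated bound $g^{1+4\ep_1}$ rather than a larger power of $g$. The exponent $\beta_1$ collects the polynomial-in-$T$ slack from the Wu--Xue counting and the volume asymptotics, and $c_1(\ep_1)$ absorbs the subexponential error in the volume bounds; the $(\log g)^6\cdot g$ term collects the contributions of topological types of small, bounded complexity which can be estimated directly in analogy with Lemmas \ref{Lema contribution of simple and separating} and \ref{contribution of cimple and non-separating}.
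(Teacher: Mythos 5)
Your high-level plan matches the paper exactly: pass from each non-simple $\gamma$ of length $\leqslant T$ to the subsurface $X(\gamma)$ it fills, bound the multiplicity of $\gamma\mapsto X(\gamma)$ by a count of filling geodesics using the Wu--Xue bound, then apply Mirzakhani's integration formula to the simple boundary multicurve and close with the volume estimates of the Appendix (Lemma \ref{Horrible bound}, Theorem \ref{Variable n asymptotic}). The stratification by topological type and the observation that $|\chi(X(\gamma))|\ll\log g$ are also exactly what the paper does.

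However, there is a genuine gap in the step where you ``invoke the Wu--Xue counting bound for filling primitive closed geodesics\ldots on a surface of signature $(g',n'_c+n'_b)$.'' The Wu--Xue bound (Theorem \ref{Main Counting}) has a constant $c(\ep_1,m)$ whose dependence on the Euler characteristic $m=2g'-2+n'_c+n'_b$ is not controlled, and in your stratification $m$ runs up to order $\log g$. You therefore cannot apply that bound uniformly across topological types. The paper resolves this by splitting $\textup{Sub}_T(X)$ into $1\leqslant|\chi(Y)|\leqslant33$ (where Theorem \ref{Main Counting} applies with a uniform constant) and $|\chi(Y)|\geqslant34$ (where only a crude area-based count $\#_{\textup{fill}}(Y,L)\leqslant\text{Area}(Y)\cdot e^{L+6}$ is used, and the per-type gain in the Weil--Petersson volume ratio, roughly $g^{-m}$, absorbs the much worse $e^{T}\sim g^4$ counting loss). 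Without this split, your argument as stated does not go through; with it, it becomes Lemmas \ref{Lemma- Geodesics to subsurfaces}, \ref{Euler characteristic 17+} and \ref{Non-simple lema 3}. Relatedly, your attribution of the two terms in the stated bound is reversed: in the paper the $(\log g)^6\cdot g$ term arises from the large-Euler-characteristic part (crude count times the strong volume decay), and the $c_1(\ep_1)(\log g)^{\beta_1}n^2 g^{1+4\ep_1}$ term arises from the bounded-Euler-characteristic part, where $c_1(\ep_1)$ is precisely the Wu--Xue counting constant and the $n^2$ factor comes from the worst admissible $(g_0,a_0,k)$ with $2g_0+a_0+k=3$.

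Two smaller points worth noting. First, the paper shows $\text{Vol}(X(\gamma))\leqslant4l_\gamma(X)$ and $l(\partial X(\gamma))\leqslant2l_\gamma(X)$ (Lemma \ref{Proposition- SUbsurface construction}), which is what gives the definition of $\textup{Sub}_T(X)$; you allude to ``a standard collar/Euler-characteristic estimate'' but this needs to be the precise quantitative statement, since both the $2T$ boundary bound and the $4T$ volume bound are used. Second, you need injectivity of $Y\mapsto\partial Y$ on $\textup{Sub}_T(X)$ in order to apply Mirzakhani's formula to the boundary multicurve rather than to the subsurface; the paper verifies this holds for $g$ large using $8T<\text{Vol}(X)$.
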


We prove Lemma \ref{contribution of non-simple} through a sequence
of lemmas. Before we give a brief outline of the method, we need the
concept of a filling closed curve.
\begin{defn}
Let $X$ be a finite-area hyperbolic surface with possible boundary.
A closed curve $\eta\subset Y$ is filling if the complement $Y\backslash\eta$
is a disjoint union of disks and cylinders such that every cylinder
either deformation retracts to a boundary component of $Y$ or is
a neighbourhood of a cusp. We let $\#_{\text{fill}}(X,L)$ denote
the number of oriented filling geodesics on $X$ with lengths $\leqslant L$.
\end{defn}

\subsection*{Idea of the proof of Lemma \ref{contribution of non-simple}}

We shall extend the method of \cite[Section 7]{WX21} to non-compact
surfaces. The basic idea is as follows.
\begin{itemize}
\item Given a surface $X\in\mathcal{M}_{g,n}$ and a geodesic $\gamma\in\mathcal{P}^{ns}\left(X\right)$,
we construct a subsurface $X(\gamma)$ of $X$ with geodesic boundary
(of controlled length) which is filled by $\gamma$. The multiplicity
of the map $\gamma\mapsto X(\gamma)$ is bounded by the number of
filling geodesics of $X(\gamma)$. This allows us to write 
\[
\sum_{\gamma\in\mathcal{P}^{ns}(X)}H_{X,1}(\gamma)\leqslant\sum_{\substack{Y\text{ subsurface of }X\\
Y\text{ has geodesic boundary}
}
}\sum_{\text{filling geodesics \ensuremath{\gamma} on }Y}H_{X,1}\left(l_{X}\left(\gamma\right)\right).
\]
\item We control the length of a filling geodesic in terms of $l_{X}\left(\partial Y\right)$
in Lemma \ref{Proposition- SUbsurface construction} and apply \cite[Theorem 4]{WX21}
to bound the number of filling geodesics on a subsurface and show
that there is an explicit function $A$, supported in $[0,2T)$, with
\[
\sum_{\gamma\in\mathcal{P}^{ns}(X)}H_{X,1}(\gamma)\leqslant\sum_{\substack{Y\text{ subsurface of }X\\
Y\text{ has geodesic boundary}
}
}A\left(l_{X}\left(\partial Y\right)\right).
\]
\item Since the boundary of each subsurface $Y$ consists of simple closed
geodesics, we can apply Mirzakhani's integration formula to bound
the Weil-Petersson expectation of 
\[
\sum_{\{Y\text{subsurface of }X\text{ with geodesic boundary}\}}A\left(l_{X}\left(\partial Y\right)\right).
\]
\end{itemize}
\begin{defn}
Let $X\in\mathcal{M}_{g,n}$ be a hyperbolic surface and let $\gamma\subset X$
be a non-simple closed geodesic. Let $N_{\delta}(\gamma)$ denote
the $\delta$-neighborhood of $\gamma$ where $\delta$ is sufficiently
small to ensure that $N_{\delta}(\gamma)$ deformation retracts to
$\gamma$ and that the boundary $\partial N_{\delta}(\gamma)$ is
a disjoint union of simple closed curves. We define $X(\gamma)$ to
be the connected subsurface obtained from $N_{\delta}(\gamma)$ as
follows: for each boundary component $\xi\in N_{\delta}(\gamma)$,
\begin{itemize}
\item If $\xi$ bounds a disc we fill the disc into $N_{\delta}(\gamma)$.
\item If $\xi$ is homotopically non-trivial we shrink it to the unique
simple closed geodesics in its free homotopy class and deform $N_{\delta}(\gamma)$
accordingly.
\item If two different components $\xi,\xi'$ deform to the same geodesic
then we do not glue them together, we view $X(\gamma)$ as an open
subsurface of $X$.
\item If $\xi$ is freely homotopic to a closed horocycle bounding a cusp
$C_{i}$ we fill the cusp into $N_{\delta}(\gamma)$.
\end{itemize}
After deforming $N_{\delta}(\gamma)$ in this way we obtain the surface
$X(\gamma).$

The construction of $X(\gamma)$ allows us to control $\text{Vol}\left(X(\gamma)\right)$
and the length of $\partial X(\gamma)$ in terms of $l_{\gamma}\left(X\right)$,
as summarized by the following lemma. Bounding $\text{Vol}\left(X(\gamma)\right)$
corresponds to bounding the Euler characteristic of $X(\gamma)$ by
Gauss-Bonnet.
\end{defn}

\begin{lem}
\label{Proposition- SUbsurface construction}Let $X\in\mathcal{M}_{g,n}$
and $\gamma$ be a non-simple closed geodesic on $X$. The subsurface
$X(\gamma)$ of $X$ satisfies 
\end{lem}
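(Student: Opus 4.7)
The plan is to establish three properties of $X(\gamma)$: that $\gamma$ fills $X(\gamma)$, that the topological complexity of $X(\gamma)$ is controlled by the self-intersection number of $\gamma$, and that $l_X(\partial X(\gamma)) \ll l_\gamma(X)$. The first is tautological from the construction, so the real content is the latter two.

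For the topological bound, I would argue as follows. Taking $\delta$ small enough, the $\delta$-neighborhood $N_\delta(\gamma)$ deformation retracts onto $\gamma$, and in fact onto a regular neighborhood of a 4-valent graph $G_\gamma$ embedded in $X$ whose vertices are the self-intersection points of $\gamma$ and whose edges are the subarcs of $\gamma$ between consecutive self-intersections. Thus $-\chi(N_\delta(\gamma)) = -\chi(G_\gamma) = i(\gamma,\gamma)$, where $i(\gamma,\gamma)$ denotes the geometric self-intersection number. Filling in disk and cusp boundary components of $N_\delta(\gamma)$ and deforming homotopically non-trivial $\xi$ to geodesics only changes $\chi$ by the contribution of the filled disks and cusps (each contributing $+1$ or $+0$ to $\chi$). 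Hence $-\chi(X(\gamma)) \leq i(\gamma,\gamma)$. A standard self-intersection bound (e.g.\ by the Margulis lemma and a collar-type argument, as in Basmajian's work or as used in \cite{WX21}) gives $i(\gamma,\gamma) \ll l_\gamma(X)^2$ (and in fact, by cruder counting, just $\ll e^{l_\gamma(X)}$), and by Gauss--Bonnet this controls $\mathrm{Vol}(X(\gamma))$.

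For the boundary-length bound, I would observe that each component of $\partial N_\delta(\gamma)$ is either null-homotopic (gets filled), bounds a cusp (gets filled), or is freely homotopic to one of the geodesic boundary components of $X(\gamma)$. In the last case, the homotopic boundary component of $N_\delta(\gamma)$ is built from arcs parallel to $\gamma$ (at distance $\delta$) with small pieces of $\partial N_\delta$ at endpoints; letting $\delta \to 0$, the total length of $\partial N_\delta(\gamma)$ converges to $2 l_\gamma(X)$ since each point of $\gamma$ is covered by two sides of its $\delta$-tube. Since geodesic representatives minimize length in their free homotopy classes, we obtain $l_X(\partial X(\gamma)) \leq 2 l_\gamma(X)$.

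The main obstacle is keeping the combinatorics of the deformation clean: in particular, two distinct components $\xi,\xi'$ of $\partial N_\delta(\gamma)$ may be freely homotopic to the same simple closed geodesic in $X$, and one must be careful to record $X(\gamma)$ as an open subsurface (as stipulated in the construction) rather than gluing them, so that the Euler-characteristic accounting above is correct and the boundary count $l_X(\partial X(\gamma))$ includes both copies. Once this is handled, the bounds $-\chi(X(\gamma)) \leq i(\gamma,\gamma) \ll l_\gamma(X)^2$ and $l_X(\partial X(\gamma)) \leq 2 l_\gamma(X)$ together give the summary stated before the lemma, and are exactly the estimates needed in the subsequent subsurface/Mirzakhani integration argument sketched in the outline.
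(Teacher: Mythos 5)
The paper does not prove this lemma from scratch: it cites \cite[Proposition 47]{NWX20} for the compact case and notes the argument carries over verbatim, so your attempt to supply the details is welcome. Your treatment of item (1) is correct (tautological from the construction), and your argument for item (2) --- let $\delta\to 0$, so $l\bigl(\partial N_{\delta}(\gamma)\bigr)\to 2\,l_{\gamma}(X)$ because each point of $\gamma$ is shadowed by two sides of its tube, then pass to geodesic representatives which only shorten length --- is exactly right.

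The gap is in item (3). From the deformation retraction of $N_{\delta}(\gamma)$ onto the $4$-valent graph $G_{\gamma}$ you correctly get $-\chi\bigl(X(\gamma)\bigr)\leqslant i(\gamma,\gamma)$, and then you invoke a self-intersection estimate $i(\gamma,\gamma)\ll l_{\gamma}(X)^{2}$. Via Gauss--Bonnet this yields only $\mathrm{Vol}\bigl(X(\gamma)\bigr)\ll l_{\gamma}(X)^{2}$, whereas the lemma asserts the \emph{linear} bound $\mathrm{Vol}\bigl(X(\gamma)\bigr)\leqslant 4\,l_{\gamma}(X)$ with an explicit constant. No sharpening of the self-intersection count will fix this: the self-intersection number of a length-$L$ geodesic really can grow quadratically in $L$, so this route produces the wrong order of magnitude. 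The argument in \cite[Proposition 47]{NWX20} avoids self-intersection counting altogether. Since $\gamma$ fills $X(\gamma)$, the complement $X(\gamma)\setminus\gamma$ decomposes into topological disks (with piecewise-geodesic boundary consisting of sub-arcs of $\gamma$) and half-cylinders (one boundary component lying on $\partial X(\gamma)$ or being a cusp, the other consisting of sub-arcs of $\gamma$). The hyperbolic isoperimetric inequality bounds the area of each disk by the length of its boundary, and an elementary computation bounds the area of each half-cylinder by the length of its $\gamma$-side; summing, and observing that each point of $\gamma$ lies on the boundary of exactly two complementary regions, gives $\mathrm{Vol}\bigl(X(\gamma)\bigr)\leqslant 2\cdot 2\,l_{\gamma}(X)=4\,l_{\gamma}(X)$. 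The linear bound is what is actually used downstream, in the definition of $\mathrm{Sub}_{T}(X)$ via the condition $\mathrm{Vol}(Y)\leqslant 4T$; if one only had a quadratic bound, the Euler-characteristic range in Lemma \ref{Euler characteristic 17+} would become $O(\log^{2}g)$ rather than $O(\log g)$, and while the estimates there might still be salvageable, the lemma as stated would not be proved.
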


\begin{enumerate}
\item $\gamma$ is a filling geodesic of $X(\gamma)$.
\item The length of the boundary satisfies 
\[
l\left(\partial X(\gamma)\right)\leqslant2l_{\gamma}(X).
\]
\item The volume satisfies 
\[
\text{Vol}\left(X(\gamma)\right)\leqslant4l_{\gamma}(X).
\]
\end{enumerate}
Lemma \ref{Proposition- SUbsurface construction} is proved in \cite[Proposition 47]{NWX20}
for compact surfaces. The proof in our case is identical. This leads
us to make the following definition.
\begin{defn}
With $T=4\log g$, $X\in\mathcal{M}_{g,n}$, we define 
\[
\text{Sub}(X)\stackrel{\text{def}}{=}\{Y\subset X\mid Y\text{ is a connected subsurface of \ensuremath{X} with geodesic boundary\},}
\]
and
\begin{align*}
\text{Sub}_{T}(X) & \stackrel{\text{def}}{=}\{Y\in\text{Sub}(X)\mid l(\partial Y)\leqslant2T,\text{\text{Vol}}(Y)\leqslant4T\},
\end{align*}
where we allow two distinct simple closed geodesics on the boundary
of $Y$ to be a single simple closed geodesic in $X$. 
\end{defn}

Lemma \ref{Proposition- SUbsurface construction} tells us that for
any $X\in\mathcal{M}_{g,n}$, any non-simple geodesic $\gamma$ with
length $\leqslant T$ fills a subsurface $X(\gamma)\in\text{Sub}_{T}(X)$.
If any other $\gamma'\in\mathcal{P}(X)$ satisfies $X(\gamma')=X(\gamma)$
then $\gamma'$ is also a filling geodesic of $X(\gamma)$ with length
$\leqslant T$. We have 
\begin{equation}
\{\gamma'\in\mathcal{P}^{ns}(X)\mid X(\gamma')=X(\gamma)\}\subseteq\{\text{oriented filling geodesics of }X(\gamma)\text{ with length }\leqslant T\}.\label{eq: non-simple geodesics to filling geodesics}
\end{equation}
 Therefore we will need to control the number of non-simple geodesics
which fill a given subsurface. This is achieved by the following theorem.
\begin{thm}[{\cite[Theorem 4]{WX21}}]
\label{Main Counting} Let $m=2g'-2+n'\geqslant1$. For any $\ep_{1}>0$
there exists a constant $c(\ep_{1},m)$ only depending on $\ep_{1}$
and $m$ such that for any $X\in\mathcal{M}_{g',n'}(x_{1},...,x_{n'})$
where $x_{i}\geqslant0$, we have
\[
\#_{\textup{fill}}(X,L)\leqslant c(\epsilon_{1},m)\cdot e^{L-\frac{1-\ep_{1}}{2}\sum_{i=1}^{n}x_{i}}.
\]
\end{thm}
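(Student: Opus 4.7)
The plan is to pass from non-simple geodesics to the subsurfaces they fill (via Lemma \ref{Proposition- SUbsurface construction}), invoke Theorem \ref{Main Counting} as a black-box bound on the number of oriented filling geodesics of each subsurface, and then take the Weil-Petersson expectation using Mirzakhani's integration formula (Theorem \ref{thm:MIF}) together with the cusp-dependent volume estimates of Appendix \ref{sec:Volume-estimates}. Because $f_T$ is supported in $[0,T)$ with $T=4\log g$, any $\gamma\in\mathcal{P}^{ns}(X)$ contributing to the sum has $l_{\gamma}(X)<T$, so by Lemma \ref{Proposition- SUbsurface construction} the map $\gamma\mapsto X(\gamma)$ places $\gamma$ into $\text{Sub}_T(X)$, and (\ref{eq: non-simple geodesics to filling geodesics}) yields the pointwise bound
\[
\sum_{\gamma\in\mathcal{P}^{ns}(X)}H_{X,1}(\gamma)\leqslant\sum_{Y\in\text{Sub}_{T}(X)}\;\sum_{\substack{\gamma\text{ oriented filling geodesic of }Y\\ l_{\gamma}(X)\leqslant T}}H_{X,1}(\gamma).
\]

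To treat the inner sum I would apply Theorem \ref{Main Counting} to $Y$ with $m=m(Y):=2g'(Y)-2+n'(Y)+k(Y)$, where $n'(Y)$ counts cusps of $X$ contained in $Y$ and $k(Y)$ counts the geodesic boundary components. Using the elementary bound $\frac{l}{2\sinh(l/2)}f_T(l)\ll(1+l)e^{-l/2}$ together with integration by parts against $N_Y(L)\leqslant c(\ep_1,m(Y))e^{L-\frac{1-\ep_1}{2}l(\partial Y)}$ gives
\[
\sum_{\substack{\gamma\text{ filling }Y\\ l_{\gamma}\leqslant T}}H_{X,1}(\gamma)\ll c(\ep_1,m(Y))\cdot T\,e^{T/2}\cdot e^{-\frac{1-\ep_1}{2}l(\partial Y)}\ll c(\ep_1,m(Y))\cdot\log g\cdot g^{2}\cdot e^{-\frac{1-\ep_1}{2}l(\partial Y)}.
\]

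Next I would take Weil-Petersson expectation by grouping $Y$ according to its topological type $(g',n',k)$. The constraints $\text{Vol}(Y)\leqslant 4T$ and Gauss-Bonnet force $2g'-2+n'+k\ll\log g$, so there are only $O((\log g)^{C})$ such types. For each type, $\partial Y$ is a multi-curve of $k_0\leqslant k$ simple closed geodesics in $X$, each appearing with multiplicity $m_i\in\{1,2\}$ in $\partial Y$ (the latter occurring when a single curve of $X$ is a boundary component of $Y$ on both sides). After choosing a subset of the $n$ cusps of $X$ to lie in $Y$ (a factor $\binom{n}{n'}$) and a topological type for the complement $X\setminus Y$, Mirzakhani's integration formula produces an integral of the shape
\[
C_\Gamma\int_{\mathbb{R}_{\geqslant0}^{k_0}}e^{-\frac{1-\ep_1}{2}\sum m_ix_i}\,\frac{V_Y(\underline{x})\,V_{\mathrm{comp}}(\underline{x})}{V_{g,n}}\,\prod x_i\,d\underline{x}.
\]
Bounding $V_Y(\underline{x})$ and $V_{\mathrm{comp}}(\underline{x})$ via Lemma \ref{lem:Sinh bound-1} produces $\sinh$-factors competing with the exponential decay; the leftover $e^{\ep_1\sum m_ix_i/2}$ integrates to $g^{O(\ep_1)}$ over $[0,2T]$, while the remaining volume ratios (controlled by Lemma \ref{Summing over} and the appendix) contribute a factor of order $n^2/g$ when a small cusp-bearing piece splits off and $O(1)$ otherwise, producing the two terms of the stated bound.

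The main obstacle is uniformity in the topological type of $Y$: the constant $c(\ep_1,m(Y))$ from Theorem \ref{Main Counting} may grow with $m(Y)\sim\log g$, the volume ratios $V_YV_{\mathrm{comp}}/V_{g,n}$ depend delicately on how the $n$ cusps are partitioned, and the double-boundary case $m_i=2$ costs an extra factor $g^{O(\ep_1)}$ per repeated component. Careful bookkeeping with the volume asymptotics of Appendix \ref{sec:Volume-estimates} must show that summing $c(\ep_1,m(Y))$ over the $O((\log g)^C)$ topological types absorbs into a single $(\log g)^{\beta_1}$ factor, that the $n^2$ dependence arises entirely from the cusp-partition sum, and that the leading contributions responsible for the $(\log g)^6\cdot g$ term (arising from small, cusp-light subsurfaces such as pairs of pants and one-holed tori) are separated out and handled by a sharper estimate avoiding any $g^{\Omega(\ep_1)}$ loss.
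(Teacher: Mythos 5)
Your proposal does not prove the stated result; it is aimed at the wrong statement and is, as written, circular. Theorem \ref{Main Counting} is a deterministic counting bound on a \emph{fixed} bordered or cusped surface $X\in\mathcal{M}_{g',n'}(x_{1},\dots,x_{n})$: the number of oriented filling geodesics of length at most $L$ is at most $c(\ep_{1},m)\,e^{L-\frac{1-\ep_{1}}{2}\sum_{i}x_{i}}$. Nothing about Weil--Petersson probability, Mirzakhani's integration formula, the test functions $f_{T}$, or the genus parameter $g$ enters into it. Your outline instead reproduces the strategy for bounding $\mathbb{E}_{WP}^{g,n}\bigl[\sum_{\gamma\in\mathcal{P}^{ns}(X)}H_{X,1}(\gamma)\bigr]$ --- i.e.\ the content of Lemma \ref{Lemma- Geodesics to subsurfaces} together with Lemmas \ref{Euler characteristic 17+} and \ref{Non-simple lema 3}, culminating in Lemma \ref{contribution of non-simple} --- and in doing so you explicitly ``invoke Theorem \ref{Main Counting} as a black-box bound on the number of oriented filling geodesics of each subsurface.'' That is exactly the statement you were asked to establish, so no part of your argument addresses it: there is no step anywhere in the proposal that bounds $\#_{\textup{fill}}(X,L)$, and in particular no mechanism producing the crucial decay factor $e^{-\frac{1-\ep_{1}}{2}\sum_{i}x_{i}}$ in the boundary lengths.

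For comparison, the paper does not reprove this theorem at all: it is quoted from \cite{WX21} (their Theorem 4), and the only new content is the remark that the hypothesis $x_{i}>0$ can be relaxed to $x_{i}\geqslant0$, because the two ingredients of the Wu--Xue proof --- a lower bound on the length of a filling geodesic in terms of the total boundary length, and a soft counting bound of the form $\#\{\gamma:\ell_{\gamma}(X)\leqslant L\}\ll_{m}e^{L}$ --- already hold, or extend straightforwardly, to non-compact finite-area surfaces. A self-contained blind proof would have to supply precisely those two ingredients: show that any filling closed geodesic on $X$ has length at least $\frac{1-\ep_{1}}{2}\sum_{i}x_{i}$ minus a constant depending only on $\ep_{1}$ and $m$ (so that the window $[\frac{1-\ep_{1}}{2}\sum_{i}x_{i}-O_{\ep_{1},m}(1),\,L]$ is the only range contributing), and then count geodesics in that window using a bound depending only on the Euler characteristic $m$, taking care that the presence of cusps ($x_{i}=0$) does not break either step. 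None of this appears in your write-up, so the proposal has a genuine gap: it proves (in outline) a different lemma of the paper while assuming the theorem in question.
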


\begin{rem}
\cite[Theorem 4]{WX21} is stated in for surfaces without cusps, i.e.
$x_{i}>0$, however the extension to $x_{i}\geqslant0$ is immediate.
Indeed, \cite[Theorem 4]{WX21} follows from \cite[Theorem 38]{WX21}
and \cite[Lemma 10]{WX21}. \cite[Theorem 38]{WX21} already holds
for non-compact surfaces and it is straightforward to check that the
basic counting result \cite[Lemma 10]{WX21} generalizes to non-compact
surfaces.

We can now pass from non-simple geodesics to subsurfaces with geodesic
boundary. This is done in the following lemma, proved in \cite[Proposition 30]{WX21}
for $X\in\mathcal{M}_{g}$. The proof is identical in our case.
\end{rem}

\begin{lem}
\label{Lemma- Geodesics to subsurfaces}For any $\ep_{1}>0$, $X\in\mathcal{M}_{g,n}$,
there exists a constant $c_{1}\left(\ep_{1}\right)$ only depending
on $\ep_{1}$ such that
\begin{align}
\sum_{\gamma\in\mathcal{P}^{ns}(X)}H_{X,1}(\gamma) & \ll Te^{T}\sum_{\substack{Y\in\textup{\textup{Sub}}_{T}(X)\\
|\chi(Y)|\geqslant34
}
}e^{-\frac{l(\partial Y)}{4}}+c_{1}\left(\ep_{1}\right)T\sum_{\substack{Y\in\textup{\textup{Sub}}_{T}(X)\\
1\leqslant|\chi(Y)|\leqslant33
}
}e^{\frac{T}{2}-\frac{1-\epsilon_{1}}{2}l(\partial Y)}.\label{eq:Geodesics to subsurfaces}
\end{align}
\end{lem}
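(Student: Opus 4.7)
The plan is to follow \cite[Proposition 30]{WX21} essentially verbatim, with only cosmetic adjustments for the presence of cusps. Since $f_T$ is supported in $(-T,T)$, only $\gamma \in \mathcal{P}^{ns}(X)$ with $l_\gamma(X) \leq T$ contribute to the left-hand side. For each such $\gamma$, Lemma \ref{Proposition- SUbsurface construction} produces $X(\gamma) \in \text{Sub}_T(X)$, and by the containment \eqref{eq: non-simple geodesics to filling geodesics} the map $\gamma \mapsto X(\gamma)$ has fibers contained in the set of oriented filling geodesics of $X(\gamma)$ of length $\leq T$. This yields the partition
\[
\sum_{\gamma \in \mathcal{P}^{ns}(X)} H_{X,1}(\gamma) \;\leq\; \sum_{Y \in \text{Sub}_T(X)} \;\sum_{\substack{\gamma \text{ filling } Y \\ l_\gamma(X) \leq T}} H_{X,1}(\gamma),
\]
on which we work subsurface by subsurface.

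Two pointwise estimates drive the argument. First, Lemma \ref{Proposition- SUbsurface construction}(2) gives $l_\gamma(X) \geq \tfrac{1}{2} l(\partial Y)$ whenever $X(\gamma) = Y$. Second, using that $f_T$ is uniformly bounded and $l_\gamma(X) \leq T$,
\[
H_{X,1}(\gamma) \;=\; \frac{l_\gamma(X)}{2\sinh(l_\gamma(X)/2)} f_T(l_\gamma(X)) \;\ll\; l_\gamma(X)\,e^{-l_\gamma(X)/2}.
\]
I would then split by the complexity of $Y$. For $1 \leq |\chi(Y)| \leq 33$, Theorem \ref{Main Counting} applies with $m = |\chi(Y)|$ and constant bounded by $c_1(\ep_1) := \max_{1 \leq m \leq 33} c(\ep_1, m)$. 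Binning filling geodesics of $Y$ by integer parts $\lfloor l_\gamma(X) \rfloor = k \in \{0,\dots,\lfloor T \rfloor\}$, Theorem \ref{Main Counting} gives at most $c_1(\ep_1)\, e^{k+1 - (1-\ep_1)l(\partial Y)/2}$ filling geodesics in each bin, each contributing $\ll (k+1) e^{-k/2}$. Summing $k$ from $0$ to $T$ produces the second term of the target, with an overall factor of $T e^{T/2 - (1-\ep_1)l(\partial Y)/2}$.

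For $|\chi(Y)| \geq 34$ one cannot afford the $m$-dependent constant of Theorem \ref{Main Counting} to sit in front of the sum, since the number of subsurfaces with prescribed topology grows with $|\chi(Y)|$. Instead, in this regime the uniform (in $\ep_1$) filling-geodesic bound $\#_{\text{fill}}(Y, T) \ll e^T$ coming from the large-Euler-characteristic case of the filling-counting machinery in \cite[Section 7]{WX21} suffices. Combined with the pointwise bound $H_{X,1}(\gamma) \ll T e^{-l_\gamma(X)/2} \leq T e^{-l(\partial Y)/4}$, each such subsurface contributes $\ll T e^T e^{-l(\partial Y)/4}$, giving the first term.

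The main obstacle is verifying that both filling-geodesic counts — the sharp Theorem \ref{Main Counting} for bounded $|\chi(Y)|$ and the uniform-in-$\ep_1$ count for large $|\chi(Y)|$ — transfer to subsurfaces $Y$ that may possess cusps of $X$ rather than genuine geodesic boundary components in every position. As observed in the remark following Theorem \ref{Main Counting}, the underlying estimates \cite[Theorem 38, Lemma 9]{WX21} go through when $x_i = 0$ is allowed; once this is accepted, the rest of the argument is a routine reorganization of \cite[Proposition 30]{WX21}.
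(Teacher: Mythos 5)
Your overall strategy coincides with the paper's, which simply invokes \cite[Proposition 30]{WX21} and notes that the proof carries over verbatim: restrict to $l_{\gamma}(X)<T$ using the support of $f_{T}$, map $\gamma\mapsto X(\gamma)\in\mathrm{Sub}_{T}(X)$ via Lemma \ref{Proposition- SUbsurface construction}, bound the fibre of this map by oriented filling geodesics of $X(\gamma)$ of length at most $T$, and split according to $|\chi(Y)|\leqslant 33$ versus $|\chi(Y)|\geqslant 34$, applying Theorem \ref{Main Counting} with $c_{1}(\ep_{1})=\max_{1\leqslant m\leqslant 33}c(\ep_{1},m)$ in the first range. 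Your treatment of the bounded-complexity range is correct: binning by $\lfloor l_{\gamma}\rfloor$, using $l_{\gamma}\geqslant\tfrac{1}{2}l(\partial Y)$ and $H_{X,1}(\gamma)\ll l_{\gamma}e^{-l_{\gamma}/2}$ (legitimate here since non-simple closed geodesics have length bounded below by a universal constant) produces exactly the second term of (\ref{eq:Geodesics to subsurfaces}).

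The gap is in the $|\chi(Y)|\geqslant 34$ case, where you assert a count $\#_{\mathrm{fill}}(Y,T)\ll e^{T}$ with implied constant uniform over all $Y\in\mathrm{Sub}_{T}(X)$, attributing it to the ``large-Euler-characteristic case of the filling-counting machinery'' of \cite[Section 7]{WX21}. No such uniform, area-free bound is provided by anything quoted in the paper, and it is exactly what is unavailable: the reason for the case split (see the remark following Lemma \ref{Lemma- Geodesics to subsurfaces}) is that the constant $c(\ep_{1},m)$ of Theorem \ref{Main Counting} is not controlled in $m$, and the only count used in the high-complexity regime is the soft bound $\#_{\mathrm{fill}}(Y,L)\leqslant\mathrm{Area}(Y)\cdot e^{L+6}$, which for $Y\in\mathrm{Sub}_{T}(X)$ carries the factor $\mathrm{Area}(Y)\leqslant 4T$. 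Substituting that bound into your computation gives a per-subsurface contribution $\ll T\cdot\mathrm{Area}(Y)\cdot e^{T}e^{-l(\partial Y)/4}\ll T^{2}e^{T}e^{-l(\partial Y)/4}$, an extra factor of $T=4\log g$ compared with the first term of (\ref{eq:Geodesics to subsurfaces}); trading the crude bound $l_{\gamma}\leqslant T$ for part of the exponential instead yields $Te^{T}e^{-l(\partial Y)/8}$, with a weakened exponent. Either variant is harmless downstream (Lemma \ref{contribution of non-simple} and Theorem \ref{Main Geometric Theorem} only change by powers of $\log g$), but neither is the stated inequality, so as written your argument does not prove the lemma: you need to either justify an area-free count of filling geodesics, follow the precise bookkeeping of \cite[Proposition 30]{WX21}, or state and propagate the version with the extra logarithmic factor.
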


\begin{rem}
The difference between the first and second term arises because we
apply Theorem \ref{Main Counting} to subsurfaces with $1\leqslant|\chi(Y)|\leqslant34$
whereas we only apply a soft geodesic counting result, $\#_{\textup{fill}}(X,L)\leqslant\text{Area}(X)\cdot e^{L+6}$,
to subsurfaces with $|\chi(Y)|\geqslant34$. The reason for this is
that it is not clear how badly the constant $c(\epsilon_{1},m)$ from
Theorem \ref{Main Counting} depends on the Euler characteristic $m$
so we can only apply Theorem \ref{Main Counting} to subsurfaces with
uniformly bounded Euler characteristic. As a consequence of forthcoming
calculations, the Weil-Petersson expectation of the number of subsurfaces
$Y\in\text{Sub}_{T}(X)$ with $|\chi(Y)|\geqslant k$ is sufficiently
small for any $k\geqslant34$ so that we can accept the loss from
the soft geodesic counting.
\end{rem}

For the remainder of the section, we assume that $g$ is sufficiently
large so that for $Y\in\text{Sub}_{T}(X)$, the map $Y\mapsto\partial Y$
is injective. This is justified since any two distinct subsurfaces
in $Y_{1},Y_{2}\in\text{Sub}_{T}(X)$ with $\partial Y_{1}=\partial Y_{2}$
must satisfy $Y_{1}\cup Y_{2}=X$, giving
\[
\text{Vol}\left(X\right)=2\pi\left(2g-2+n\right)\leqslant\text{Vol}\left(Y_{1}\right)+\text{Vol}\left(Y_{2}\right)\leqslant8T=32\log g,
\]
which is not possible for sufficiently large $g$. 

We now want to apply Mirzakhani's integration formula to bound the
Weil-Petersson expectation of the right hand side of (\ref{eq:Geodesics to subsurfaces}).
We introduce the following notation.
\begin{notation}
Let $X\in\mathcal{M}_{g,n}$. For a subsurface $Y_{0}\in\text{Sub}_{T}(X)$,
we write 
\[
Y_{0}=Y_{0}\left(q,\left(g_{0},a_{0},n_{0}\right),\left\{ \left(g_{1},a_{1},n_{1}\right),\dots,\left(g_{q},a_{q},n_{q}\right)\right\} \right)=Y_{0}\left(q,\underline{g},\underline{a},\underline{n}\right),
\]
 to indicate that $Y_{0}$ has the following properties.
\begin{itemize}
\item $Y_{0}$ is homeomorphic to $S_{g_{0},k+a_{0}}$where $k>0$.
\item $Y_{0}$ has $a_{0}$ cusps and $k$ simple geodesic boundary components.
There are $n_{0}\geqslant0$ pairs of simple geodesics in $Y_{0}$
which correspond to a single simple closed geodesic in $X$.
\item The interior of its complement $X\backslash Y_{0}$ consists of $q\geqslant1$
components $Y_{1},...,Y_{q}$ where $Y_{i}$ is homeomorphic to $S_{g_{i},n_{i}+a_{i}}$.
We observe that $n_{i}\geqslant1$ and 
\begin{lyxlist}{00.00.0000}
\item [{i)}] $\sum_{i=1}^{q}2g_{i}-2+n_{i}+a_{i}=2g-2+n-\left|\chi\left(Y_{0}\right)\right|.$
\item [{ii)}] $\sum_{i=1}^{q}n_{i}=k-2n_{0}.$
\item [{iii)}] $\sum_{j=1}^{q}a_{j}=n-a_{0}.$
\end{lyxlist}
\end{itemize}
\end{notation}

Given $X\in\mathcal{M}_{g,n}$ and a choice of marking, any $Y_{0}(q,\underline{a},\underline{n},\underline{g})\in\text{Sub}_{T}(X)$
is freely homotopic to the image under the marking of a subsurface
$Y\subset S_{g,n}$ where $Y$ is in the $\text{MCG}_{g,n}$-orbit
of a subsurface $\tilde{Y_{0}}=\tilde{Y}_{0}(q,\underline{a},\underline{n},\underline{g})\subset S_{g,n}$
(with $\tilde{Y_{0}}$ homeomorphic to $S_{g_{0},k+a_{0}}$, where
$S_{g,n}\backslash\tilde{Y}_{0}$ has $q$ components $\tilde{Y}_{1},...,\tilde{Y}_{q}$
with $\tilde{Y}_{i}$ homeomorphic to $S_{g_{i},n_{i}+a_{i}}$ with
$n_{i}$ boundary components and $a_{i}$ punctures). We write $\left[\tilde{Y}_{0}\right]$
to denote the homotopy class of $\tilde{Y}_{0}$. Since the mapping
class group does not permute the punctures of $S_{g,n}$, the number
of distinct $\text{MCG}_{g,n}$-orbits of subsurfaces corresponding
to a given choice of $q,\left(g_{0},n_{0},g_{0}\right),\left\{ \left(g_{1},a_{1},n_{1}\right),\dots,\left(g_{q},a_{q},n_{q}\right)\right\} $
is bounded above by 
\[
\frac{n!}{a_{0}!\cdot\cdots\cdot a_{q}!}.
\]

\begin{lem}
\label{Euler characteristic 17+}
\begin{equation}
\mathbb{E}_{WP}^{g,n}\left[\sum_{\substack{Y\in\text{Sub}_{T}(X)\\
|\chi\left(Y\right)|\geqslant34
}
}e^{-\frac{l(\partial Y)}{4}}\right]\ll\frac{\left(\log g\right)^{5}}{g^{3}}.\label{eq:Lemma >34}
\end{equation}
\end{lem}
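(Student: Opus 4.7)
The plan is to decompose the sum over $Y\in\textup{Sub}_T(X)$ by topological type $(q,\underline{a},\underline{n},\underline{g})$ and apply Mirzakhani's integration formula. The volume constraint $\textup{Vol}(Y)\leqslant 4T = 16\log g$ together with Gauss--Bonnet limits $|\chi(Y_0)|$ to $O(\log g)$, so only polynomially-in-$\log g$ many topological types arise. For each type, the factor $\frac{n!}{a_0!\cdots a_q!}$ counts the distinct $\textup{MCG}_{g,n}$-orbits of such subsurfaces (since the mapping class group does not permute cusps), and the boundary $\partial Y$ (with the $n_0$ paired components identified) defines a multi-curve whose homotopy class $\Gamma$ sits in a single $\textup{MCG}_{g,n}$-orbit within each type.

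First, I apply Theorem \ref{thm:MIF} to each topological type. The orbit sum of $e^{-l(\partial Y)/4}\mathds{1}_{Y\in\textup{Sub}_T(X)}$ is converted into an integral of the form
\begin{equation*}
\frac{C_\Gamma}{V_{g,n}}\int_{[0,2T]^k} e^{-\sum_{j=1}^k x_j/4}\cdot V_{g_0,k+a_0}(\underline{0}_{a_0},\underline{x})\prod_{i=1}^{q} V_{g_i,n_i+a_i}(\underline{0}_{a_i},\underline{x}^{(i)})\prod_{j=1}^{k} x_j\,dx_j.
\end{equation*}
Next, I apply Lemma \ref{lem:Sinh bound-1} to every volume factor with nonzero boundary lengths. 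This pulls the $x_j$-dependence outside, producing a product $V_{g_0,k+a_0}\prod_i V_{g_i,n_i+a_i}$ times decoupled one-dimensional integrals, each bounded by $\int_0^{2T} 2\sinh(x/2)\,e^{-x/4}\,dx \ll e^{T/2} = g^2$.

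Then, I invoke the volume asymptotics of Appendix \ref{sec:Volume-estimates} (built on Theorem \ref{Variable n asymptotic} and its consequences) to estimate the ratio $V_{g_0,k+a_0}\prod_i V_{g_i,n_i+a_i}/V_{g,n}$. The crucial point is that $|\chi(Y_0)|\geqslant 34$ delivers a saving of order $g^{-|\chi(Y_0)|/2 + O(1)}$, up to factors polynomial in $\log g$ and $n$. Since $k\leqslant |\chi(Y_0)|+O(1)$ and each boundary integral contributes at most $g^2$, the combined bound for a fixed type is at most $(\log g)^{O(1)}\cdot g^{-|\chi(Y_0)|/2 + 2k + O(1)}$. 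The threshold $|\chi(Y_0)|\geqslant 34$ is calibrated so that this is bounded by $(\log g)^{O(1)}\cdot g^{-3}$; summing over the $(\log g)^{O(1)}$ topological types, and using $n = o(g^{1/2})$ to control the cusp-distribution factor $n!/(a_0!\cdots a_q!)$, yields the claimed $(\log g)^5/g^3$ bound.

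The main obstacle is ensuring the Euler-characteristic saving from the volume ratio dominates the accumulated $g^2$ contributions from the $k$ boundary integrations, uniformly across all admissible topological decompositions. A secondary issue is the uniformity of the Appendix volume estimates across different component types $(g_i,n_i+a_i)$: the ratio bound must hold not only when one component carries essentially all of the genus, but also when the genus and the growing number of cusps are spread across several pieces. This is precisely what the Appendix is designed to supply, and tracking the resulting powers of $\log g$ and $n$ carefully is the crux of the calculation.
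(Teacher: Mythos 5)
Your overall plan is the same as the paper's (decompose by topological type, apply Mirzakhani's integration formula, invoke Lemma \ref{lem:Sinh bound-1}, use the Appendix volume estimates), but the central arithmetic claim has a genuine gap that makes the calibration fail.

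The per-integral estimate is wrong. A cut circle of length $x_j$ produces \emph{two} boundary components of the cut surface (one on $Y_0$, one on the adjacent piece, or two on $Y_0$ for an ``inner'' pair), so Lemma \ref{lem:Sinh bound-1} contributes $\left(\frac{\sinh(x_j/2)}{x_j/2}\right)^2$ per cut circle, not a single such factor. Combined with the weight $x_j$ from Theorem \ref{thm:MIF} and the kernel $e^{-x_j/4}$, the integrand is $\frac{4\sinh^2(x_j/2)}{x_j}e^{-x_j/4}\sim e^{3x_j/4}/x_j$, so each one-dimensional integral over $[0,2T]$ is of size $e^{3T/2}/T \sim g^6/\log g$, not $e^{T/2}=g^2$ as you wrote.

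More seriously, even with your $g^2$, treating the $k$ integrals as \emph{independent} accumulates $g^{2k}$, which is not dominated by the volume-ratio saving. Your claimed bound $g^{-|\chi(Y_0)|/2+2k+O(1)}$ is not calibrated by $|\chi(Y_0)|\geqslant 34$: writing $m=|\chi(Y_0)|=2g_0-2+k+a_0$, for $g_0=a_0=0$ one has $m=k-2$, so $-m/2+2k=3k/2+1$, a \emph{positive} power of $g$. The threshold $m\geqslant 34$ does nothing to rescue this. What you are missing is that membership in $\text{Sub}_T(X)$ imposes a single constraint $l(\partial Y_0)\leqslant 2T$ on the \emph{total} boundary length, so the integral is over a simplex-like region and the combined contribution is $\ll e^{7T/2}=g^{14}$, independent of $k$; this is the computation the paper imports from \cite[Proposition 31]{WX21}. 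It is precisely this $k$-independent prefactor, balanced against Lemma \ref{Horrible bound}'s $n^{a_0}(2g_0+k+a_0-3)!/g^m$ and the observation that $m\geqslant 34$ forces $2g_0+\tfrac{a_0}{2}+k-2\geqslant 16$, that yields $\ll (\log g)^5/g^3$. Finally, the $(2g_0+k+a_0-3)!$ factor from the volume ratio is not ``polynomial in $\log g$'' once $m\sim\log g$: it must be explicitly balanced against $g^{-m}$, as the proof of Lemma \ref{Horrible bound} does, rather than absorbed into a $(\log g)^{O(1)}$ constant.
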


\begin{proof}
We start by bounding the contribution of a given $\text{MCG}_{g,n}$-orbit
to (\ref{eq:Lemma >34}). Let $g_{0},a_{0}$, $k$ be fixed with $m=2g_{0}-2+k+a_{0}\geqslant34$.
By Gauss-Bonnet, we have that $m\leqslant\frac{4T}{2\pi}\leqslant\frac{5}{2}\log g$.
For $n_{0},n_{1},\dots,n_{q},a_{1},\dots,a_{q},g_{1},\dots,g_{q}\geqslant0$
with $\sum_{i=1}^{q}n_{i}=k-2n_{0}$ and $\sum_{j=1}^{q}a_{j}=n-a_{0}$,
we have 
\begin{align*}
 & \frac{1}{V_{g,n}}\int_{\mathcal{M}_{g,n}}\sum_{[Y]\in\text{MCG}_{g,n}\cdot[\tilde{Y}_{0}(q,\underline{a},\underline{n},\underline{g})]}e^{-\frac{l(\partial Y)}{4}}\boldsymbol{1}_{[0,2T]}\left(l_{X}\left(\partial Y\right)\right)dX\\
= & \frac{1}{V_{g,n}}\int_{\mathcal{M}_{g,n}}\sum_{[\partial Y]\in\text{MCG}_{g,n}\cdot[\partial\tilde{Y}_{0}(q,\underline{a},\underline{n},\underline{g})]}e^{-\frac{l(\partial Y)}{4}}\boldsymbol{1}_{[0,2T]}\left(l_{X}\left(\partial Y\right)\right)dX,
\end{align*}
since the map $Y\mapsto\partial Y$ is injective. By applying Mirzakhani's
integration formula, one can compute that 

\begin{align*}
 & \frac{1}{V_{g,n}}\int_{\mathcal{M}_{g,n}}\sum_{[Y]\in\text{MCG}_{g,n}\cdot[\tilde{Y}_{0}(q,\underline{a},\underline{n},\underline{g})]}e^{-\frac{l(\partial Y)}{4}}\boldsymbol{1}_{[0,2T]}\left(l_{X}\left(\partial Y\right)\right)dX\\
\ll & e^{\frac{7}{2}T}\frac{V_{g_{0},k+a_{0}}V_{g_{1},n_{1}+a_{1}}\cdots V_{g_{q},n_{q}+a_{q}}}{V_{g,n}\cdot n_{0}!n_{1}!\cdots n_{q}!}.
\end{align*}
A near identical computation is carried out in detail in \cite[Proposition 31]{WX21}
so we omit it here. We now sum over the $\text{MCG}_{g,n}$-orbits
to bound the contribution of subsurfaces in $\text{Sub}_{T}\left(X\right)$
with a given Euler characteristic. We calculate 
\begin{align*}
 & \mathbb{E}_{WP}^{g,n}\left[\sum_{\substack{Y\in\text{Sub}_{T}(X)\\
Y\cong S_{g_{0},k+a_{0}}
}
}e^{-\frac{l(\partial Y)}{4}}\right]\\
\leqslant & \sum_{n_{0}=0}^{\lfloor\frac{k}{2}\rfloor}\sum_{q=1}^{k-2a_{0}}\sum_{\mathcal{A}}\frac{1}{V_{g,n}}\cdot{n \choose a_{0},\dots,a_{q}}\cdot\int_{\mathcal{M}_{g,n}}\sum_{[Y]\in\text{MCG}_{g,n}\cdot[\tilde{Y}_{0}(q,\underline{a},\underline{n},\underline{g})]}e^{-\frac{l(\partial Y)}{4}}\boldsymbol{1}_{[0,2T]}\left(l_{X}\left(\partial Y\right)\right)dX\\
\ll & e^{\frac{7}{2}T}\sum_{n_{0}=0}^{\lfloor\frac{k}{2}\rfloor}\sum_{q=1}^{k-2a_{0}}\sum_{\left\{ \left(g_{j},n_{j},q_{j}\right)\right\} _{j=1}^{q}\in\mathcal{A}}{n \choose a_{0},\dots,a_{q}}.\frac{V_{g_{0},k+a_{0}}V_{g_{1},n_{1}+a_{1}}\cdots V_{g_{q},n_{q}+a_{q}}}{V_{g,n}\cdot n_{0}!n_{1}!\cdots n_{q}!},
\end{align*}
where for a given $n_{0}$ and $q$, the summation is over the set
of ``admissible triples'' $\mathcal{A}$, whose elements we denote
by $\left\{ \left(g_{j},n_{j},q_{j}\right)\right\} _{j=1}^{q}$, which
we define to be the set of $\left\{ \left(g_{1},a_{1},n_{1}\right),\dots,\left(g_{q},a_{q},n_{q}\right)\right\} $
where $g_{j},a_{j}\geqslant0$, $n_{j}\geqslant1$ and $2g_{j}+a_{j}+n_{j}\geqslant3$
such that
\begin{lyxlist}{00.00.0000}
\item [{i)}] $\sum_{i=1}^{q}\left(2g_{i}-2+n_{i}+a_{i}\right)=2g-2+n-m$.
\item [{ii)}] $\sum_{i=1}^{q}n_{i}=k-2n_{0}$. 
\item [{iii)}] $\sum_{j=1}^{q}a_{j}=n-a_{0}$. 
\end{lyxlist}
Recalling that $34\leqslant m=2g_{0}-2+k+a_{0}\leqslant\frac{5}{2}\log g$
is fixed, we apply lemma \ref{WpVOLest} to see 
\begin{align*}
 & \sum_{n_{0}=0}^{\lfloor\frac{k}{2}\rfloor}\sum_{q=1}^{k-2n_{0}}\sum_{\left\{ \left(g_{j},n_{j},a_{j}\right)\right\} _{j=1}^{q}\in\mathcal{A}}\frac{n!}{a_{0}!\cdot\cdots\cdot a_{q}!}.\frac{V_{g_{0},k+a_{0}}V_{g_{1},n_{1}+a_{1}}\cdots V_{g_{q},n_{q}+a_{q}}}{V_{g,n}\cdot n_{0}!n_{1}!\cdots n_{q}!}\\
\ll & \sum_{n_{0}=0}^{\lfloor\frac{k}{2}\rfloor}\sum_{q=1}^{k-2n_{0}}\left(2g_{0}+k+a_{0}-3\right)!\cdot\frac{n^{a_{0}}}{g^{m}}\ll\frac{k^{2}\left(2g_{0}+k+a_{0}-3\right)!}{g^{m}}.
\end{align*}
 Summing over the possible values of $g_{0},a_{0}$ and $k$, we calculate
\begin{align*}
 & \mathbb{E}_{WP}^{g,n}\left[\sum_{\substack{Y\in\text{Sub}_{T}(X)\\
|\chi\left(Y\right)|\geqslant34
}
}e^{-\frac{l(\partial Y)}{4}}\right]\\
\ll & e^{\frac{7}{2}T}\sum_{0\leqslant a_{0}\leqslant\lceil\frac{4T}{2\pi}\rceil}\sum_{\substack{1\leqslant k\leqslant\lceil\frac{4T}{2\pi}\rceil+2-a_{0}}
}\sum_{\substack{34\leqslant2g_{0}-2+k+a_{0}\leqslant\lceil\frac{4T}{2\pi}\rceil}
}\mathbb{E}_{WP}^{g,n}\left[\sum_{\substack{Y\in\text{Sub}_{T}(X)\\
Y\cong S_{g_{0},k+a_{0}}
}
}e^{-\frac{l(\partial Y)}{4}}\right]\\
\ll & e^{\frac{7}{2}T}\sum_{0\leqslant a_{0}\leqslant\lceil\frac{4T}{2\pi}\rceil}\sum_{\substack{1\leqslant k\leqslant\lceil\frac{4T}{2\pi}\rceil+2-a_{0}}
}\sum_{\substack{34\leqslant2g_{0}-2+k+a_{0}\leqslant\lceil\frac{4T}{2\pi}\rceil}
}\frac{k^{2}\left(2g_{0}+a_{0}+k-3\right)!n^{a_{0}}}{g^{2g_{0}+a_{0}+k-2}}\\
\ll & T^{5}e^{\frac{7T}{2}}\frac{1}{g^{2g_{0}+\frac{a_{0}}{2}+k-2}}\ll\frac{T^{5}e^{\frac{7T}{2}}}{g^{18}},
\end{align*}
since $2g_{0}+a_{0}+k\geqslant36$ guarantees that $2g_{0}+\frac{a_{0}}{2}+k\geqslant18$.
Recalling that $T=4\log g$, we conclude that 
\[
\mathbb{E}_{WP}^{g,n}\left[\sum_{\substack{Y\in\text{Sub}_{T}(X)\\
|\chi\left(Y\right)|\geqslant34
}
}e^{-\frac{l(\partial Y)}{4}}\right]\ll\frac{\left(\log g\right)^{5}}{g^{3}},
\]
as required.
\end{proof}
\begin{lem}
\label{Non-simple lema 3}There is a constant $\beta>0$ such that
for any $\ep_{1}>0$, 
\[
\mathbb{E}_{WP}^{g,n}\left[\sum_{\substack{Y\in\text{Sub}_{T}(X)\\
1\leqslant|\chi\left(Y\right)|\leqslant33
}
}e^{\frac{T}{2}-\frac{1-\epsilon_{1}}{2}l(\partial Y)}\right]\ll\left(\log g\right)^{\beta}\cdot n^{2}\cdot g^{1+4\ep_{1}}.
\]
\end{lem}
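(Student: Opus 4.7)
My plan is to mirror the approach of Lemma \ref{Euler characteristic 17+}, adapting for (i) the new integrand $e^{T/2 - \frac{1-\epsilon_1}{2}l(\partial Y)}$ and (ii) the restriction to finitely many topological types from the uniform bound $|\chi(Y)| \leq 33$. First I would fix a topological type $Y_0(q,\underline{a},\underline{n},\underline{g})$ with $m = 2g_0 - 2 + k + a_0 \in \{1, \dots, 33\}$ and apply Mirzakhani's integration formula (Theorem \ref{thm:MIF}) to the $\textup{MCG}_{g,n}$-orbit sum, using Lemma \ref{lem:Sinh bound-1} to pull the Weil-Petersson volumes $V_{g_0, k+a_0}\prod_i V_{g_i, n_i+a_i}$ out of the length integral, leaving an integral over $(x_1,\dots,x_{k-n_0}) \in [0,2T]^{k-n_0}$.

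The crucial single-variable estimate is
\[
\int_0^{2T} 2\sinh(x/2)\, e^{-\frac{1-\epsilon_1}{2} x}\, dx \leq \int_0^{2T} e^{\epsilon_1 x / 2}\, dx \ll \frac{1}{\epsilon_1}\, g^{4\epsilon_1},
\]
with an analogous bound for each doubled non-separating curve, obtained from $(2\sinh(y/2))^2/y \ll e^y/y$ together with the constraint $y \leq T$ forced by $l(\partial Y) \leq 2T$; integrability at $y = 0$ is automatic since $\sinh^2(y/2)/y \sim y/4$. Combining this with Lemma \ref{Horrible bound} (which yields $\ll (2g_0+k+a_0-3)! \cdot n^{a_0}/g^m$ after summing the multinomial-weighted volume ratio over admissible $(\underline{n}, \underline{g})$) and multiplying by the $e^{T/2} = g^2$ prefactor, the contribution of a single topological type is at most $C(m) \cdot n^{a_0} \cdot g^{2 - m + 4\epsilon_1(k-n_0)}/\epsilon_1^{k-n_0}$.

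Summing over the finitely many allowable $(g_0, k, a_0, n_0, q)$, the dominant term arises when $m = 1$, forcing $2g_0 + k + a_0 = 3$ and hence $k \leq 3$. The case $g_0 = 0, k = 1, a_0 = 2$ produces the factor $n^2$ and contributes $\ll n^2 g^{1+4\epsilon_1}/\epsilon_1$, while the remaining possibilities $(g_0, k, a_0) \in \{(0, 2, 1), (0, 3, 0), (1, 1, 0)\}$ contribute at most $n\, g^{1+8\epsilon_1}$, $g^{1+12\epsilon_1}$, and $g^{1+4\epsilon_1}$ respectively. Applying Theorem \ref{Main Counting} with $\epsilon_1/3$ in place of $\epsilon_1$ from the outset transforms all of these into $n^2 g^{1+4\epsilon_1}$ (up to multiplicative constants depending on $\epsilon_1$), and the polynomial-in-$\log g$ losses from Theorem \ref{Variable n asymptotic} propagated through Lemma \ref{Horrible bound}, together with the $T = 4\log g$ factor in Lemma \ref{Lemma- Geodesics to subsurfaces}, are absorbed into $(\log g)^\beta$ for a suitable $\beta > 0$.

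The principal obstacle will be careful combinatorial bookkeeping: tracking the $\epsilon_1$-exponent in each case to ensure the rescaling $\epsilon_1 \mapsto \epsilon_1/3$ uniformly pushes every case under $g^{1+4\epsilon_1}$, and verifying that the doubled-curve integration (whose $1/y$ behaviour near $y = 0$ requires the cancellation from Mirzakhani's measure $y\,dy$) genuinely contributes only $g^{4\epsilon_1}/\epsilon_1$. No new geometric ideas beyond those used for Lemma \ref{Euler characteristic 17+} are required.
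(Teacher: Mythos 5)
Your overall framework matches the paper's, but your ``crucial single-variable estimate'' does not follow from the method you describe, and this is the heart of the argument. You propose to apply Lemma \ref{lem:Sinh bound-1} to pull out \emph{all} the volumes $V_{g_0,k+a_0}\prod_i V_{g_i,n_i+a_i}$. If you do that, each unmatched length variable $x$ picks up \emph{two} factors of $\frac{2\sinh(x/2)}{x}$ (one from $V_{g_0,k+a_0}(\underline{x})$, one from the complementary piece $V_{g_i,n_i+a_i}(\underline{x}^{(i)})$), and after multiplying by the Mirzakhani measure $x\,dx$ the integrand is $\frac{4\sinh^2(x/2)}{x}e^{-\frac{1-\epsilon_1}{2}x}$, not $2\sinh(x/2)e^{-\frac{1-\epsilon_1}{2}x}$. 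The former behaves like $e^{\frac{1+\epsilon_1}{2}x}/x$ for large $x$, so $\int_0^{2T}$ gives order $g^{4+4\epsilon_1}$ per variable, far too large. The estimate you write down (one $\sinh$ per variable, hence $g^{4\epsilon_1}$) is correct, but it requires \emph{not} applying the $\sinh$ bound to $V_{g_0,k+a_0}(\underline{x})$. Instead one must exploit that $Y_0$ has uniformly bounded Euler characteristic: $V_{g_0,k+a_0}(\underline{x})$ is a polynomial in $\underline{x}$ of degree bounded in terms of $m\leqslant 33$, hence $\ll_m T^{O(1)} V_{g_0,k+a_0}$ on the region $l(\partial Y)\leqslant 2T$. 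Combined with the $\sinh$ bound on the $V_{g_i}$ factors only, each unmatched variable contributes $2\sinh(x/2)e^{-\frac{1-\epsilon_1}{2}x}$ and each matched one a harmless $O(\mathrm{poly}(T))$. This polynomial bound is precisely the content of the cited \cite[Proposition 34]{WX21}, and it is a genuinely new ingredient relative to Lemma \ref{Euler characteristic 17+} (there the crude $e^{-l(\partial Y)/4}$ integrand tolerates the double-$\sinh$ loss because $g^{-m}$ with $m\geqslant 34$ dominates); so your closing claim that no new ideas are needed is incorrect.

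Two smaller points. Your rescaling $\epsilon_1\mapsto\epsilon_1/3$ is based on only the $m=1$ topological types; across all $1\leqslant m\leqslant 33$ the number of boundary geodesics $k-n_0$ can be as large as $35$, so you would need $\epsilon_1\mapsto\epsilon_1/35$ or similar for the per-variable factors $(g^{4\epsilon_1})^{k-n_0}$ to stay below $g^{4\epsilon_1}$. The paper sidesteps this entirely: because the lengths are constrained by $l(\partial Y)\leqslant 2T$ (a simplex, not a box), the estimate in \cite[Proposition 34]{WX21} produces a single factor $T^\beta e^{\epsilon_1 T}$ regardless of how many boundary components there are, so no rescaling is required. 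Your approach can be made to work, but only after fixing the volume bound for $V_{g_0,k+a_0}(\underline{x})$ as above.
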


\begin{proof}
Let $\ep_{1}>0$, $g_{0}\geqslant0,a_{0}\geqslant0$ and $k\geqslant1$
be fixed with $1\leqslant m=2g_{0}-2+k+a_{0}\leqslant33$. The computation
in \cite[Proposition 34]{WX21} gives that there exists a fixed $\beta>0$
\footnote{Note the value of $\beta$ in \cite[Proposition 34]{WX21} is $66$
and corresponds to the choice to consider $|\chi(Y)|\leqslant16$
as opposed to our choice of $33$. Here we could for example take
$\text{\ensuremath{\beta<}}135$. Fixed powers of $\log g$ will be
negligible in the final calculations.}with
\begin{align*}
 & \frac{1}{V_{g,n}}\int_{\mathcal{M}_{g,n}}\sum_{\tilde{Y}\in\text{MCG}_{g,n}\cdot\tilde{Y}_{0}(q,\underline{a},\underline{n},\underline{g})}e^{\frac{T}{2}-\frac{1-\ep_{1}}{2}l_{X}\left(\partial\tilde{Y}\right)}\boldsymbol{1}_{[0,2T]}\left(l_{X}\left(\partial\tilde{Y}\right)\right)dX\\
\ll & \frac{T^{\beta}e^{\frac{T}{2}+\ep_{1}T}}{V_{g,n}n_{0}!\cdots n_{q}!}V_{g_{1},n_{1}+a_{1}}\cdots V_{g_{q},n_{q}+a_{q}}.
\end{align*}
Then we see that 
\begin{align*}
 & \mathbb{E}_{WP}^{g,n}\left[\sum_{\substack{Y\in\text{Sub}_{T}(X)\\
Y\cong S_{g_{0},k+a_{0}}
}
}e^{\frac{T}{2}-\frac{1-\ep_{1}}{2}l(\partial Y)}\right]\ll T^{\beta}e^{\frac{T}{2}+\ep_{1}T}\sum_{n_{0}=0}^{\lfloor\frac{k}{2}\rfloor}\sum_{q=1}^{k-2a_{0}}\sum_{\mathcal{A}}\frac{n!}{a_{0}!\cdots a_{q}!}\frac{V_{g_{1},n_{1}+a_{1}}\cdots V_{g_{q},n_{q}+a_{q}}}{n_{0}!\cdots n_{q}!V_{g,n}},
\end{align*}
where, as before, for given $n_{0}$ and $q$ the summation is over
the set $\mathcal{A}$ of ``admissible triples'' $\left\{ \left(g_{j},n_{j},q_{j}\right)\right\} _{j=1}^{q}$
where $g_{j},a_{j}\geqslant0$, $n_{j}\geqslant1$ and $2g_{j}+a_{j}+n_{j}\geqslant3$
such that $\sum_{i=1}^{q}2g_{i}-2+n_{i}+a_{i}=2g-2+n-m$, $\sum_{i=1}^{q}n_{i}=k-2n_{0}$
and $\sum_{j=1}^{q}a_{j}=n-a_{0}$. We apply Lemma \ref{WpVOLest}
to calculate that
\begin{align*}
 & T^{\beta}e^{\frac{T}{2}+\ep_{1}T}\sum_{n_{0}=0}^{\lfloor\frac{k}{2}\rfloor}\sum_{q=1}^{k-2n_{0}}\sum_{\mathcal{A}}\frac{n!}{a_{0}!\cdots a_{q}!}\frac{V_{g_{1},n_{1}+a_{1}}\cdots V_{g_{q},n_{q}+a_{q}}}{n_{0}!\cdots n_{q}!V_{g,n}}\\
\ll & T^{\beta}e^{\frac{T}{2}+\ep_{1}T}\sum_{n_{0}=0}^{\lfloor\frac{k}{2}\rfloor}\sum_{q=1}^{k-2n_{0}}\frac{n^{a_{0}}}{g^{2g_{0}+a_{0}+k-2}}\ll T^{\beta}e^{\frac{T}{2}+\epsilon_{1}T}\frac{n^{a_{0}}}{g^{2g_{0}+a_{0}+k-2}}.
\end{align*}
We sum over possible values of $g_{0},a_{0}$ and $k$ to see that
\begin{align*}
\mathbb{E}_{WP}^{g,n}\left[\sum_{\substack{Y\in\text{Sub}_{T}(X)\\
1\leqslant|\chi\left(Y\right)|\leqslant33
}
}e^{\frac{T}{2}-\frac{1-\epsilon_{1}}{2}l(\partial Y)}\right] & \ll\sum_{\substack{\left(g_{0},a_{0},k\right)\\
3\leqslant2g_{0}+a_{0}+k\leqslant35
}
}T^{\beta}e^{\frac{T}{2}+\ep_{1}T}\frac{n^{a_{0}}}{g^{2g_{0}+a_{0}+k-2}}\\
 & \ll T^{\beta}e^{\frac{T}{2}+\ep_{1}T}\cdot\frac{n^{2}}{g}\ll\left(\log g\right)^{\beta}n^{2}g^{1+4\ep_{1}},
\end{align*}
as claimed. 
\end{proof}
We can now prove Lemma \ref{contribution of non-simple}.
\begin{proof}[Proof of Lemma \ref{contribution of non-simple}]
Combining Lemma \ref{Lemma- Geodesics to subsurfaces}, Lemma \ref{Euler characteristic 17+}
and Lemma \ref{Non-simple lema 3} we deduce that for any $\ep_{1}>0$
there exists a constant $c_{1}\left(\ep_{1}\right)$ such that 
\begin{align*}
 & \mathbb{E}_{WP}^{g,n}\left[\sum_{\gamma\in\mathcal{P}^{ns}(X)}H_{X,1}(\gamma)\right]\\
\ll & e^{T}T\mathbb{E}_{WP}^{g,n}\left[\sum_{\substack{Y\in\text{Sub}_{T}(X)\\
|\chi\left(Y\right)|\geqslant34
}
}e^{-\frac{l(\partial Y)}{4}}\right]+c_{1}\left(\ep_{1}\right)T\mathbb{E}_{WP}^{g,n}\left[\sum_{\substack{Y\in\text{Sub}_{T}(X)\\
1\leqslant|\chi\left(Y\right)|\leqslant33
}
}e^{\frac{T}{2}-\frac{1-\ep_{1}}{2}l(\partial Y)}\right]\\
\ll & \left(\log g\right)^{6}g+c_{1}\left(\ep_{1}\right)\left(\log g\right)^{\beta+1}n^{2}g^{1+4\ep_{1}},
\end{align*}
concluding the proof.
\end{proof}

\subsection{Proof of Theorem \ref{Main Geometric Theorem}}

Finally we conclude the section with the proof of Theorem \ref{Main Geometric Theorem}.
\begin{proof}[Proof of Theorem \ref{Main Geometric Theorem}]
By Lemma \ref{Lema contribution of simple and separating}, Lemma
\ref{contribution of cimple and non-separating}, Lemma \ref{lemma- contribution of iterates}
and Lemma \ref{contribution of non-simple} together with (\ref{eq:Labelled formula})
we see that there is a constant $\beta$ such that for any $\varepsilon_{1}>0$
there exists a constant $c_{1}\left(\ep_{1}\right)$ with
\[
\mathbb{E}_{WP}^{g,n}\left[\sum_{\gamma\in\mathcal{P}(X)}\sum_{k=1}^{\infty}H_{X,k}(\gamma)-\hat{f}_{T}\left(\frac{i}{2}\right)\right]\ll n^{2}g+\log\left(g\right)^{6}g+c_{1}\left(\ep_{1}\right)\left(\log g\right)^{\beta+1}n^{2}g^{1+4\epsilon_{1}}.
\]
\end{proof}

\section{Proof of Theorem \ref{Main Theorem of paper}\label{sec:Proof-of-Theorem}}

We now conclude with the proof of Theorem \ref{Main Theorem of paper}.
\begin{proof}[Proof of Theorem \ref{Main Theorem of paper}]
Let $n=O\left(g^{\alpha}\right)$ for some $0\leqslant\alpha<\frac{1}{2}$
and let $0<\ep<\min\left\{ \frac{1}{4},\frac{1}{2}-\alpha\right\} $
be given. For $X\in\mathcal{M}_{g,n}$, we define 
\[
\tilde{\lambda}_{1}\left(X\right)\eqdf\begin{cases}
\lambda_{1}\left(X\right) & \text{if it exists, }\\
\frac{1}{4} & \text{otherwise. }
\end{cases}
\]
Our aim is to prove that 
\[
\mathbb{P}_{WP}^{g,n}\left[\tilde{\lambda}_{1}\left(X\right)\leqslant\frac{1}{4}-\frac{\left(2\alpha+1\right)^{2}}{16}-\varepsilon\right]\to0,
\]
as $g\to\infty$. By Remark \ref{rem:positivity of random variable},
there exists a constant $\nu\geqslant0$ such that for $g$ sufficiently
large,
\[
\sum_{\gamma\in\mathcal{P}(X)}\sum_{k=1}^{\infty}\frac{l_{\gamma}\left(X\right)}{2\sinh\left(\frac{kl_{\gamma}(x)}{2}\right)}f_{T}\left(kl_{\gamma}\left(X\right)\right)-\hat{f}_{T}\left(\frac{i}{2}\right)+\nu ng\geqslant0,
\]
for any $X\in\mathcal{M}_{g,n}.$ By Theorem \ref{Main Geometric Theorem},
for any $\ep_{1}>0$ there is constant $c_{1}\left(\ep_{1}\right)>0$
with
\begin{align*}
\mathbb{E}_{WP}^{g,n}\left[\sum_{\gamma\in\mathcal{P}(X)}\sum_{k=1}^{\infty}H_{X,k}(\gamma)-\hat{f_{T}}\left(\frac{i}{2}\right)+\nu ng\right]\ll & n^{2}g+\log\left(g\right)^{6}g+c_{1}\left(\ep_{1}\right)\left(\log g\right)^{\beta+1}n^{2}g^{1+4\ep_{1}},
\end{align*}
where $\beta>0$ is a universal constant. Taking $\ep_{1}<\frac{\ep}{8}$
and applying Markov's inequality, 
\[
\mathbb{P}_{WP}^{g,n}\left[\sum_{\gamma\in\mathcal{P}(X)}\sum_{k=1}^{\infty}H_{X,k}(\gamma)-\hat{f}_{T}\left(\frac{i}{2}\right)+\nu ng>n^{2}g^{1+\ep}\right]\ll_{\epsilon}\left(1+\frac{\log\left(g\right)^{6}}{n^{2}}+\left(\log g\right)^{\beta+1}\right)g^{-\frac{\ep}{2}}\cdot
\]
However, if $X\in\mathcal{M}_{g,n}$ has $\lambda_{1}(X)\leqslant\frac{1}{4}-\frac{\left(2\alpha+1\right)^{2}}{16}-\varepsilon$,
then since $\alpha\in[0,\frac{1}{2})$ this guarantees that $\lambda_{1}(X)\leqslant\frac{3}{16}$
and we can apply Theorem \ref{Theorem: Main analytic Theorem } to
see 
\[
C(\varepsilon)\log\left(g\right)g^{4(1-\varepsilon)\sqrt{\frac{1}{4}-\lambda_{1}(X)}}\leqslant\sum_{\gamma\in\mathcal{P}(X)}\sum_{k=1}^{\infty}H_{X,k}(\gamma)-\hat{f}_{T}\left(\frac{i}{2}\right)+O\left(ng\right).
\]
But since $\ep<\frac{1}{2}-\alpha$,
\[
\sqrt{\frac{1}{4}-\lambda_{1}(X)}\geqslant\frac{2\alpha+1}{4}+\ep,
\]
 and we deduce that 
\begin{align*}
C(\varepsilon)\log\left(g\right)g^{4(1-\varepsilon)\sqrt{\frac{1}{4}-\lambda_{1}(X)}} & \geqslant C(\varepsilon)\log\left(g\right)g^{(1-\varepsilon)\left((2\alpha+1)+4\ep\right)}\gg_{\epsilon}g^{2\alpha+1+2\varepsilon-4\varepsilon^{2}}>n^{2}g^{1+\ep},
\end{align*}
for sufficiently large $g$. On the last line we used that $\ep<\frac{1}{4}$
and that $n=O\left(g^{\alpha}\right)$. We deduce that 
\[
\sum_{\gamma\in\mathcal{P}(X)}\sum_{k=1}^{\infty}H_{X,k}(\gamma)-\hat{f}_{T}\left(\frac{i}{2}\right)>n^{2}g^{1+\ep},
\]
for sufficiently large $g$. This tells us that for $g$ sufficiently
large,
\begin{align*}
\mathbb{P}_{WP}^{g,n}\left[\tilde{\lambda}_{1}\left(X\right)\leqslant\frac{1}{4}-\frac{\left(2\alpha+1\right)^{2}}{16}-\varepsilon\right] & \leqslant\mathbb{P}_{WP}^{g,n}\left[\sum_{\gamma\in\mathcal{P}(X)}\sum_{k=1}^{\infty}H_{X,k}(\gamma)-\hat{f}_{T}\left(\frac{i}{2}\right)+\nu ng>n^{2}g^{1+\ep}\right]\\
 & \ll_{\ep}\left(1+\frac{\log\left(g\right)^{6}}{n}+\left(\log g\right)^{\beta+1}\right)g^{-\frac{\ep}{2}}\to0,
\end{align*}
as $g\to\infty$.
\end{proof}

\appendix

\section{Volume estimates\label{sec:Volume-estimates}}

The purpose of this appendix is to prove the necessary Weil-Petersson
volume estimates used in the proof of Theorem \ref{Main Geometric Theorem}.
Similar estimates can be found in e.g. \cite{Mi13,MP19,NWX20,GMST21,LW21}.

We need the following lemma in the proof of Lemma \ref{Lema contribution of simple and separating}
and Lemma \ref{contribution of cimple and non-separating}. 
\begin{lem}
\label{lem:Sinh bound-1}Let $x_{1},\dots,x_{n}\geqslant0$. For $g,n\geqslant0$,
$2g-2+n>0$ we have 
\[
\frac{V_{g,n}\left(x_{1},...,x_{n}\right)}{V_{g,n}}\leqslant\prod_{i=1}^{n}\frac{\sinh\left(\frac{x_{i}}{2}\right)}{\left(\frac{x_{i}}{2}\right)},
\]
and 
\[
\frac{V_{g,n}\left(\underline{0}_{n-2},x_{1},x_{2}\right)}{V_{g,n}}=\frac{4\sinh\left(\frac{x_{1}}{2}\right)\cdot\sinh\left(\frac{x_{2}}{2}\right)}{x_{1}\cdot x_{2}}\left(1+O\left(\frac{n\left(x_{1}^{2}+x_{2}^{2}\right)}{g}\right)\right),
\]
as $g\to\infty$, where the implied constant is independent of $n$.
\end{lem}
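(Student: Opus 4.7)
The plan is to apply Mirzakhani's expansion of $V_{g,n}(x_1,\ldots,x_n)$ as a polynomial in the $x_i^2$ with coefficients $[\tau_{d_1},\ldots,\tau_{d_n}]_{g,n}$, and to compare term-by-term with the Taylor expansion of $\prod_i \sinh(x_i/2)/(x_i/2)$, invoking two intersection-number estimates that will be proved in the remainder of Appendix \ref{sec:Volume-estimates}. Using $\sinh(y)/y = \sum_{d\geqslant 0} y^{2d}/(2d+1)!$, note that
\[
V_{g,n}\prod_{i=1}^{n} \frac{\sinh(x_{i}/2)}{x_{i}/2} \;=\; V_{g,n}\sum_{\underline{d}\in\mathbb{Z}_{\geqslant 0}^{n}} \prod_{i=1}^{n}\frac{(x_{i}/2)^{2d_{i}}}{(2d_{i}+1)!},
\]
so both sides of both inequalities admit the same type of series expansion, and only the coefficients differ.

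For the first inequality, I would invoke the monotonicity bound
\[
0 \;\leqslant\; [\tau_{d_{1}},\ldots,\tau_{d_{n}}]_{g,n} \;\leqslant\; V_{g,n} \qquad \text{for all admissible }\underline{d},
\]
a standard consequence of Mirzakhani's recursion that I expect to record as a lemma in the appendix. Since all Taylor coefficients of $\sinh(y)/y$ are positive, this bound applied term-by-term to the expansion of $V_{g,n}(x_1,\ldots,x_n)$ immediately yields the desired inequality.

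For the asymptotic formula, write $r(d_{1},d_{2}) \eqdf [\tau_{0},\ldots,\tau_{0},\tau_{d_{1}},\tau_{d_{2}}]_{g,n}/V_{g,n}$ where $n-2$ zeros appear. The strategy is to use a refined estimate, again to be proved in the appendix, of the form
\[
r(d_{1},d_{2}) \;=\; 1 + O\!\left(\frac{n(d_{1}+d_{2})}{g}\right)
\]
uniformly in admissible $(d_1,d_2)$, together with the fact that the truncation $|\underline{d}|\leqslant 3g-3+n$ introduces only an error exponentially small in $g$ for fixed $x_1,x_2$. Inserting this into the expansion, the constant $1$ produces exactly the main term $V_{g,n}\cdot\frac{\sinh(x_{1}/2)}{x_{1}/2}\cdot\frac{\sinh(x_{2}/2)}{x_{2}/2}$, and the error term is controlled by
\[
\frac{n}{g}\sum_{d_{1},d_{2}\geqslant 0}(d_{1}+d_{2})\prod_{i=1,2}\frac{(x_{i}/2)^{2d_{i}}}{(2d_{i}+1)!}.
\]
Using the elementary identity $\sum_{d}d\,\frac{y^{2d}}{(2d+1)!}=\tfrac{1}{2}\!\left(\cosh y - \frac{\sinh y}{y}\right)\leqslant \tfrac{y^{2}}{2}\cdot\frac{\sinh y}{y}$ (which one verifies by writing $\cosh y - \sinh(y)/y = \tfrac{1}{y}\int_0^y(\cosh y - \cosh t)\,dt$ and comparing with $y\sinh y$), each factor in the double sum is bounded by $(x_{i}^{2}/8)\cdot\sinh(x_{i}/2)/(x_{i}/2)$, giving the claimed error $O\!\bigl(n(x_{1}^{2}+x_{2}^{2})/g\bigr)$.

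The main obstacle is the second estimate $r(d_1,d_2)=1+O(n(d_1+d_2)/g)$: in contrast to the crude bound $r\leqslant 1$, it requires both an upper and a lower bound with an explicit $n$-dependent error. This is essentially a statement about normalized intersection numbers with many $\tau_0$ insertions, and I expect to derive it by iterating Mirzakhani's string equation (or an equivalent recursion) $n-2$ times on known large-$g$ asymptotics of $V_{g,n}$ and of $V_{g,n}(0,\ldots,0,x_{1},x_{2})$, keeping careful track of the combinatorial loss at each step so that the total error scales like $n/g$ in the regime $n=o(\sqrt{g})$. The truncation issue and the tail of the Taylor series, by contrast, are routine once crude upper bounds on the coefficients are in place.
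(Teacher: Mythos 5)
Your overall architecture matches the paper's: expand $V_{g,n}(\underline 0_{n-2},2x_1,2x_2)$ via Mirzakhani's polynomial with coefficients $[\tau_{d_1},\dots,\tau_{d_n}]_{g,n}$, compare term-by-term with the Taylor series of $\sinh(x_1)\sinh(x_2)/(x_1x_2)$, and use $0\leqslant[\tau_{\underline d}]_{g,n}\leqslant V_{g,n}$ for the crude upper bound and a refined coefficient estimate for the asymptotic. The place where you diverge is the refined estimate itself. You conjecture a linear-in-$|\underline d|$ bound
\[
r(d_1,d_2)=1+O\!\left(\frac{n(d_1+d_2)}{g}\right),
\]
and propose to prove it by iterating the string equation $n-2$ times. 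The paper instead simply quotes \cite[Theorem A.1]{LW21}, which gives the quadratic bound
\[
0\leqslant 1-\frac{[\tau_{d_1},\dots,\tau_{d_n}]_{g,n}}{V_{g,n}}\leqslant\frac{C\,n\,|\underline d|^2}{2g-3+n},
\]
and feeds this into the Taylor identity
\[
(x_1^2+x_2^2)\frac{\sinh x_1\sinh x_2}{x_1x_2}=\sum_{d_1,d_2\geqslant 0}\frac{x_1^{2d_1}}{(2d_1+1)!}\frac{x_2^{2d_2}}{(2d_2+1)!}\bigl(4d_1^2+2d_1+4d_2^2+2d_2\bigr),
\]
which also delivers the same $O\bigl(n(x_1^2+x_2^2)/g\bigr)$ error because $(d_1+d_2)^2\leqslant 4d_1^2+2d_1+4d_2^2+2d_2$. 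So your linear estimate is not needed -- and, to my knowledge, is not in the literature -- so proposing to re-derive it by a $n$-fold string-equation iteration is the one risky step in your plan; the $n$-dependence you must track is exactly the delicate point. Using the known quadratic estimate avoids that entirely. Separately, your remark that the tail beyond $|\underline d|\leqslant 3g-3+n$ is "exponentially small for fixed $x_1,x_2$" is slightly off the mark, as the lemma must hold for $x_i$ growing with $g$; the paper bounds the tail by $(x_1^2+x_2^2)\cdot\frac{4\sinh x_1\sinh x_2}{g\,x_1x_2}$ when $e(x_1^2+x_2^2)/(3g-3+n)\leqslant\tfrac12$, and in the complementary regime notes the target inequality is trivially true because the right side is negative. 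With those adjustments -- swap in the quadratic LW21 estimate and handle the tail via the explicit case split rather than a "fixed $x_i$" heuristic -- your argument is the paper's argument.
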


\begin{rem}
Lemma \ref{lem:Sinh bound-1} is due to \cite[Proposition 3.1]{MP19}
and \cite[Lemma 20]{NWX20}. The proof of the second statement is
identical to the proof of \cite[Lemma 20]{NWX20}, if one uses \cite[Theorem A.1]{LW21}
in place of \cite[Page 286]{Mi13}.
\end{rem}

We require estimates for $V_{g,n}$ where the number of cusps $n$
is allowed to grow with the genus $g$. The starting point is the
following theorem of Mirzakhani and Zograf.
\begin{thm}[{\cite[Theorem 1.8]{MZ15}}]
\label{Variable n asymptotic}There exists a constant $B>0$ such
that if $n=o\left(g^{\frac{1}{2}}\right)$, we have
\[
V_{g,n(g)}=\frac{B}{\sqrt{g}}\left(2g-3+n\left(g\right)\right)!\left(4\pi^{2}\right)^{2g-3+n\left(g\right)}\left(1+O\left(\frac{1+n\left(g\right)^{2}}{g}\right)\right),
\]
as $g\to\infty.$
\end{thm}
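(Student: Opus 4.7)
The plan is to follow the strategy of Mirzakhani--Zograf, which rests on Mirzakhani's topological recursion for Weil--Petersson volumes. I would start by defining the normalized quantity
\[
\hat V_{g,n} \eqdf \frac{V_{g,n}}{(2g-3+n)!\,(4\pi^{2})^{2g-3+n}},
\]
so that the target asymptotic becomes $\hat V_{g,n} = B\,g^{-1/2}\bigl(1+O((1+n^{2})/g)\bigr)$. The natural base case is $n=0$, where the large-genus asymptotic $V_{g} = \frac{B}{\sqrt g}(2g-3)!(4\pi^{2})^{2g-3}(1+O(1/g))$ is already established (via Mirzakhani's recursion and Kontsevich--Witten intersection numbers); I would take this as input and attempt to climb up in $n$.

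The core step is a quantitative ratio estimate of the form
\[
\frac{V_{g,n+1}}{V_{g,n}} \;=\; 4\pi^{2}(2g-2+n)\Bigl(1+O\!\Bigl(\tfrac{1+n}{g}\Bigr)\Bigr),
\]
which I would derive from Mirzakhani's recursion: expressing $V_{g,n+1}$ as an integral involving $V_{g,n}(\underline 0,x)$ and $V_{g-1,n+2}(\underline 0,x,x)$, then using the polynomiality of $V_{g,n}(L)$ together with upper bounds on the coefficients $[\tau_{d_{1}},\dots,\tau_{d_{n}}]_{g,n}/V_{g,n}$ in terms of $V_{g,n}$ itself (as in Lemma~\ref{lem:Sinh bound-1} and its underlying input from \cite{LW21}, Theorem~A.1). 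Iterating the ratio estimate $n$ times yields
\[
\hat V_{g,n}=\hat V_{g,0}\cdot\prod_{k=0}^{n-1}\!\Bigl(1+O\!\Bigl(\tfrac{1+k}{g}\Bigr)\Bigr),
\]
and $\prod_{k=0}^{n-1}(1+O((1+k)/g)) = 1+O(n^{2}/g)$ as long as $n^{2}/g\to 0$; this is precisely where the hypothesis $n=o(g^{1/2})$ enters.

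The main obstacle, and the technical heart of Mirzakhani--Zograf, is proving the ratio estimate with an error term linear in $n$ rather than something weaker. Naively using the recursion gives error terms that accumulate too quickly when iterated. The resolution is a careful analysis of Mirzakhani's kernel functions combined with a priori monotonicity estimates on $V_{g,n}$ (e.g., $V_{g,n+1}\leqslant 4\pi^{2}(2g-2+n)V_{g,n}$, and the matching lower bound up to a controllable factor). One must also handle the contribution of the second recursion term involving $V_{g-1,n+2}$, which is subleading but requires an a priori bound of the right order; this is typically done by first establishing a weak two-sided estimate $V_{g,n}\asymp (2g-3+n)!(4\pi^{2})^{2g-3+n}$ and then bootstrapping. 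Once the ratio estimate is proven with the correct error term, assembling the final statement is routine: multiply by the base case $\hat V_{g,0}=B g^{-1/2}(1+O(1/g))$ and absorb the $(1+n^{2})/g$ error, concluding the proof.
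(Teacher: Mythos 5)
The paper does not prove this statement: it is imported verbatim as \cite[Theorem 1.8]{MZ15}, and the appendix builds on it rather than re-establishing it. Your sketch is a correct high-level account of the actual Mirzakhani--Zograf argument---start from the $n=0$ asymptotic, pass to a ratio estimate $V_{g,n+1}/V_{g,n}=4\pi^{2}(2g-2+n)\bigl(1+O((1+n)/g)\bigr)$ via Mirzakhani's recursion with an a priori two-sided bound $V_{g,n}\asymp(2g-3+n)!(4\pi^{2})^{2g-3+n}$ to tame the $V_{g-1,n+2}$ contribution, then iterate so the errors accumulate to $O(n^{2}/g)$ exactly when $n=o(g^{1/2})$---and you correctly locate the technical crux in getting the ratio error linear in $n$; since the paper itself supplies no proof to compare against, there is nothing further to reconcile.
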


In order to control the contribution of simple separating geodesics,
in Lemma \ref{Lema contribution of simple and separating} we need
the following lemma.
\begin{lem}
\label{Summing over}If $n=o\left(g^{\frac{1}{2}}\right)$, then
\[
\sum_{\substack{0\leqslant i\leqslant g,\text{}0\leqslant j\leqslant n\\
2\leqslant2i+j\leqslant2g+n-2
}
}{n \choose j}\cdot\frac{V_{i,j+1}V_{g-i,n-j+1}}{V_{g,n}}\ll\frac{1+n^{2}}{g}.
\]
\end{lem}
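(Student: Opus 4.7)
The plan is to apply Theorem \ref{Variable n asymptotic} to convert the volume ratio into an explicit factorial quantity, and then split the sum into an \emph{extremal} part (where $i \in \{0, g\}$, so one piece has genus zero) and a \emph{bulk} part $1 \leqslant i \leqslant g-1$. The extremal part will give the leading $n^{2}/g$ contribution, while the bulk will be $O(1/g)$. By the symmetry $(i,j)\leftrightarrow(g-i,n-j)$, which preserves both $\binom{n}{j}$ and the volume ratio, it suffices to control one of the two extremal pieces and double it.

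For the bulk $1\leqslant i\leqslant g-1$, applying Theorem \ref{Variable n asymptotic} to each of $V_{i,j+1}$, $V_{g-i,n-j+1}$, and $V_{g,n}$ (supplemented, in the low-genus subregion where the asymptotic hypothesis is not met directly, by the standard Mirzakhani-type upper bound $V_{g',n'}\ll (4\pi^{2})^{2g'-3+n'}(2g'-3+n')!/\sqrt{\max(g',1)}$), and writing $M=2g-4+n$, $k=2i-2+j$, one obtains
\begin{align*}
\binom{n}{j}\frac{V_{i,j+1}V_{g-i,n-j+1}}{V_{g,n}}\ll \binom{n}{j}\sqrt{\frac{g}{i(g-i)}}\cdot\frac{1}{(M+1)\binom{M}{k}}.
\end{align*}
Since $i(g-i)\geqslant g-1$, the square-root factor is $O(1)$. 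Using the unimodality of $\binom{M}{\ell}$ together with $\binom{n}{j}/\binom{M}{j}\leqslant (n/(M-j+1))^{j}$, one checks that for fixed $j$ the sum $\sum_{i}\binom{M}{2i-2+j}^{-1}$ is dominated by its first term $\binom{M}{j}^{-1}$, and the resulting sum over $j$ is a geometric series $\sum_{j}(n/M)^{j}\ll 1$ under $n=o(g^{1/2})$. The bulk thus contributes $O(1/g)$.

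For the extremal case $i=0$ (requiring $j\geqslant 2$), using the known bound $V_{0,j+1}\ll (4\pi^{2})^{j-2}(j-2)!$ together with the ratio asymptotic
\begin{align*}
\frac{V_{g,n-j+1}}{V_{g,n}}\ll \prod_{s=0}^{j-2}\frac{1}{4\pi^{2}\,(2g-3+n-s)}
\end{align*}
from Theorem \ref{Variable n asymptotic}, one obtains
\begin{align*}
\binom{n}{j}\frac{V_{0,j+1}V_{g,n-j+1}}{V_{g,n}}\ll \frac{n^{j}(j-2)!}{j!\,(2g)^{j-1}} = \frac{n^{2}}{j(j-1)\,g}\left(\frac{n}{2g}\right)^{j-2}.
\end{align*}
The $j=2$ term is $\asymp n^{2}/g$, and since $n/(2g)=o(g^{-1/2})$, the sum over $j\geqslant 3$ is geometrically smaller. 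Doubling to account for the symmetric case $i=g$ and combining with the bulk bound yields the claimed $O((1+n^{2})/g)$.

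The main technical obstacle will be ensuring uniformity in $(i,j)$ of the application of Theorem \ref{Variable n asymptotic} in the bulk, since the asymptotic hypothesis $n = o(g^{1/2})$ need not hold for the small-genus piece. The standard remedy is a cruder Mirzakhani-type upper bound of the same factorial shape, which suffices because the decisive saving in the bulk comes from the denominator $(M+1)\binom{M}{k}$ rather than from sharp constants.
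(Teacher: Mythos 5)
Your proof is correct, and it rests on the same two pillars the paper uses: the Mirzakhani--Zograf asymptotic (Theorem \ref{Variable n asymptotic}) and a crude volume upper bound valid when the number of cusps is large relative to the genus. The difference is organizational. The paper treats $i=0,1,g-1,g$ separately and then shows the pointwise bound $\ll g^{-3}$ for every term with $2\leqslant i\leqslant g-2$, finishing with the crude count $\#\{(i,j)\}\ll ng$. You instead exploit the symmetry $(i,j)\leftrightarrow(g-i,n-j)$ explicitly, fold $i=1,\dots,g-1$ into a single ``bulk'' sum, and bound it via the inverse-binomial identity $\frac{k!(M-k)!}{(M+1)!}=\frac{1}{(M+1)\binom{M}{k}}$ together with the geometric control $\binom{n}{j}/\binom{M}{j}\leqslant(n/(M-j+1))^{j}$. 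The two packagings give the same $O(1/g)$ bulk and $O(n^{2}/g)$ extremal contributions. Your route is slightly cleaner to state since the decay of the bulk is made visible rather than absorbed into a rough count; the paper's route avoids needing the binomial inequality.

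Two small points to tighten. First, the sum $\sum_{i}\binom{M}{2i-2+j}^{-1}$ is unimodal with minima at \emph{both} endpoints $k=j$ and $k=2g-4+j$ (the latter being $\binom{M}{n-j}^{-1}$ by symmetry), so ``dominated by its first term'' should read ``dominated by its endpoint terms''; this is harmless because pairing with the symmetric $\binom{n}{j}$ makes the two endpoint contributions identical after summing over $j$. Second, the ``standard Mirzakhani-type upper bound'' $V_{g',n'}\ll(4\pi^{2})^{2g'-3+n'}(2g'-3+n')!/\sqrt{\max(g',1)}$ that you invoke to bypass the failure of the asymptotic hypothesis in the low-genus subregion is not quite off-the-shelf in that normalization; the way to obtain it (and the way the paper proceeds) is to first apply the monotonicity $V_{a,b+2}\leqslant V_{a+1,b}$ from \cite[Lemma 3.2]{Mi13} iteratively to convert cusps into genus, leaving at most one boundary, and only then invoke Theorem \ref{Variable n asymptotic}. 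With that supporting step made explicit, your argument is complete.
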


The case that $n$ is fixed is treated in \cite[Lemma 3.3]{Mi13}.
The fact that the number of cusps is growing with genus and the presence
of the multiplicity ${n \choose j}$ presents the new difficulty here.

In the following, we shall frequently apply Stirling's approximation
which tells us that there exist constants $1<c_{1}<c_{2}<2$ with
\begin{equation}
c_{1}\cdot\sqrt{2\pi w}\left(\frac{w}{e}\right)^{w}<w!<c_{2}\cdot\sqrt{2\pi w}\left(\frac{w}{e}\right)^{w},\label{Sterling's approximation}
\end{equation}
for all $w\geqslant1$. 
\begin{proof}[Proof of Lemma \ref{Summing over}]
By Theorem \ref{Variable n asymptotic}, since $n=o\left(\sqrt{g}\right)$,
we have 
\begin{equation}
V_{g,n(g)}=\frac{B}{\sqrt{g}}\left(2g-3+n\right)!\left(4\pi^{2}\right)^{2g-3+n}\left(1+O\left(\frac{1+n^{2}}{g}\right)\right).\label{eq:ng}
\end{equation}
By \cite[Lemma 3.2, part 3]{Mi13} we have that for $a,b\geqslant0$,
$2a+b\geqslant1$,
\begin{equation}
V_{a,b+4}\leqslant V_{a+1,b+2}.\label{eq:Reducing cusps-1}
\end{equation}
Applying (\ref{eq:Reducing cusps-1}) iteratively, for $j\geqslant1$,
\[
V_{i,j+1}\leqslant V_{i+\lfloor\frac{j-1}{2}\rfloor,j+1-2\lfloor\frac{j-1}{2}\rfloor}.
\]
We can then apply Theorem \ref{Variable n asymptotic} to see that
\begin{align}
V_{i,j+1}V_{g-i,n-j+1} & \ll\left(4\pi^{2}\right)^{2g+n}\frac{\left(2i+j-2\right)!}{\sqrt{i+\max\left\{ \lfloor\frac{j-1}{2}\rfloor,0\right\} }}\cdot\frac{\left(2g-2i+n-j-2\right)!}{\sqrt{g-i+\max\left\{ \lfloor\frac{n-j-1}{2}\rfloor,0\right\} }}.\label{inj-1}
\end{align}
We also observe that 
\begin{equation}
\frac{\sqrt{g}}{\sqrt{g-i+\max\left\{ \lfloor\frac{n-j-1}{2}\rfloor,0\right\} }\cdot\sqrt{i+\max\left\{ \lfloor\frac{j-1}{2}\rfloor,0\right\} }}\ll1.\label{eq:relgenusbound}
\end{equation}
Then applying (\ref{eq:ng}), $\eqref{inj-1}$ and (\ref{eq:relgenusbound}),

\begin{align*}
 & \sum_{\substack{0\leqslant i\leqslant g,\text{}0\leqslant j\leqslant n\\
2\leqslant2i+j\leqslant2g+n-2
}
}\frac{n!}{j!\left(n-j\right)!}\cdot\frac{V_{i,j+1}V_{g-i,n-j+1}}{V_{g,n}}\\
\ll & \sum_{\substack{0\leqslant i\leqslant g,\text{}0\leqslant j\leqslant n\\
2\leqslant2i+j\leqslant2g+n-2
}
}\frac{n!}{j!\left(n-j\right)!}\frac{\left(2i+j-2\right)!\left(2g-2i+n-j-2\right)!}{\left(2g+n-3\right)!}.
\end{align*}
If $i=0$ then $j\geqslant2$ and we have 
\begin{align*}
\sum_{j=2}^{n}\frac{n!}{j!\left(n-j\right)!}\cdot\frac{\left(j-2\right)!\left(2g+n-j-2\right)!}{\left(2g+n-3\right)!}= & \sum_{j=2}^{n-4}\frac{n!}{j\left(j-1\right)\left(n-j\right)!}\cdot\frac{\left(2g+n-j-2\right)!}{\left(2g+n-3\right)!}\\
\ll & \frac{n^{2}}{g}+\sum_{j=3}^{n-4}\frac{n^{j}}{g^{j-1}}\ll\frac{n^{2}}{g},
\end{align*}
since $n=o\left(\sqrt{g}\right)$. By symmetry, the same calculation
holds for the case that $i=g$. Similarly, if $i=1$ then $j\geqslant0$
and we calculate 
\begin{align*}
\sum_{j=0}^{n}\frac{n!}{j!\left(n-j\right)!}\cdot\frac{j!\left(2g+n-j-4\right)!}{\left(2g+n-3\right)!} & \ll\sum_{j=0}^{n}\frac{n^{j}}{g^{j+1}}\ll\frac{1}{g}.
\end{align*}
The same calculation holds in the case that $i=g-1$ by symmetry.
If $2\leqslant i\leqslant g-2$ then we claim that
\begin{align}
\frac{n!}{j!\left(n-j\right)!}\frac{\left(2i+j-2\right)!\left(2g-2i+n-j-2\right)!}{\left(2g+n-3\right)!} & \ll g^{-3}.\label{eq:claim first proof}
\end{align}
It is a straightforward calculation to check that (\ref{eq:claim first proof})
holds in the case that $i=2$, $j=0$ and $i=2,j=1$. Now let $L=2i+j$.
Then if $6\leqslant L\le n$, 
\begin{align*}
\frac{n!}{j!\left(n-j\right)!}\frac{\left(2i+j-2\right)!\left(2g-2i+n-j-2\right)!}{\left(2g+n-3\right)!} & \ll\frac{L!\cdot n^{L}}{g^{L}}\ll\sqrt{L}\left(\frac{Ln}{ge}\right)^{L},
\end{align*}
by Stirling's approximation. If $L=6$ then
\[
\sqrt{L}\left(\frac{Ln}{ge}\right)^{L}\ll\left(\frac{n}{g}\right)^{6}\ll g^{-3}.
\]
If $6<L\leqslant n-1$ then 
\begin{align*}
\sqrt{L}\left(\frac{Ln}{ge}\right)^{L}=\sqrt{L}\left(\frac{Ln}{ge}\right)^{L}\left(\frac{6n}{ge}\right)^{6}\cdot\left(\frac{eg}{6n}\right)^{6} & \ll\sqrt{L}g^{-3}\left(\frac{L}{6}\right)^{6}\left(\frac{Ln}{ge}\right)^{L-6}\\
 & \leqslant L^{\frac{13}{2}}e^{6-L}g^{-3}\cdot\frac{n^{2}}{g}\ll g^{-3}.
\end{align*}
If $n\leqslant L\leqslant\frac{1}{2}\left(2g+n-2\right)$, then since
\[
{n \choose i}\leqslant2^{n},
\]
we have 
\begin{align*}
\frac{n!}{j!\left(n-j\right)!}\frac{\left(2i+j-2\right)!\left(2g-2i+n-j-2\right)!}{\left(2g+n-3\right)!} & \ll2^{n}\frac{L!\left(2g+n-2-L\right)!}{\left(2g+n-3\right)!}\\
 & \leqslant\frac{2^{n}n!\left(2g-2-n\right)!}{\left(2g+n-3\right)!}\ll\left(\frac{2n}{g}\right)^{n}\ll g^{-3}.
\end{align*}
By symmetry, the case that $2i+j\geqslant\frac{1}{2}\left(2g+n-2\right)$
is treated analogously. This establishes the claim (\ref{eq:claim first proof}).
We can now use the rough bound 
\[
\#\{(i,j)\in\mathbb{Z}_{\geqslant0}\mid2\leqslant i\leqslant g-2,\text{}0\leqslant j\leqslant n,\text{}2\leqslant2i+j\leqslant2g+n-2\}\ll ng,
\]
to deduce that
\[
\sum_{\substack{2\leqslant i\leqslant g-2,\text{}0\leqslant j\leqslant n\\
2\leqslant2i+j\leqslant2g+n-2
}
}\frac{n!}{j!\left(n-j\right)!}\frac{\left(2i+j-2\right)!\left(2g-2i+n-j-2\right)!}{\left(2g+n-3\right)!}\ll\frac{n}{g^{2}},
\]
and the result follows.
\end{proof}
In order to deal with the contribution of non-simple geodesics, we
needed the following Lemma. 
\begin{lem}
\label{WpVOLest}Let $n=o\left(\sqrt{g}\right)$ and let $g_{0},a_{0},n_{0}$
and $k$ be given with $m=2g_{0}+a_{0}+k-2\leqslant3\log g-2$. For
$1\leqslant q\leqslant k-2n_{0}$, 
\[
\sum_{\left\{ \left(g_{j},a_{j},n_{j}\right)\right\} _{i=1}^{q}\in\mathcal{A}}\frac{n!}{a_{0}!\cdots a_{q}!}.\frac{V_{g_{0},n_{0}+a_{0}}\cdots V_{g_{q},n_{q}+a_{q}}}{V_{g,n}}\ll\left(2g_{0}+k+a_{0}-3\right)!\frac{n^{a_{0}}}{g^{m}},
\]
where the summation is taken over the set $\mathcal{A}$ of all ``admissible
triples'' $\left\{ \left(g_{1},a_{1},n_{1}\right),\dots,\left(g_{q},a_{q},n_{q}\right)\right\} $
where $g_{j},a_{j}\geqslant0$, $n_{j}\geqslant1$ and $2g_{j}+a_{j}+n_{j}\geqslant3$
such that
\begin{lyxlist}{00.00.0000}
\item [{i)}] $\sum_{i=1}^{q}\left(2g_{i}-2+n_{i}+a_{i}\right)=2g-2+n-m$,
\item [{ii)}] $\sum_{i=1}^{q}n_{i}=k-2n_{0}$, 
\item [{iii)}] $\sum_{j=1}^{q}a_{j}=n-a_{0}$. 
\end{lyxlist}
\end{lem}

This is similar to estimates proved in \cite{WX21} but here we need
the number of cusps to grow with genus and we have the extra multiplicity
\[
\frac{n!}{a_{0}!\cdot\cdots\cdot a_{q}!}.
\]
We take a similar approach as in the proof of Lemma \ref{Summing over}.
Lemma \ref{WpVOLest} relies on a lot of computations which, for the
sake of readability, are done separately in Lemma \ref{computations}.
\begin{proof}[Proof of Lemma \ref{WpVOLest} given Lemma \ref{computations}]
 By \cite[Lemma 3.2, part 3]{Mi13} we see that for each $a_{i}+n_{i}\geqslant2$,
we have 
\[
V_{g_{i},a_{i}+n_{i}}\leqslant V_{g_{i}+\lfloor\frac{a_{i}+n_{i}-2}{2}\rfloor,a_{i}+n_{i}-2\lfloor\frac{a_{i}+n_{i}-2}{2}\rfloor}.
\]
 This allows us to apply Theorem \ref{Variable n asymptotic} which
tells us that there exists $C_{1}>0$ with
\begin{align}
V_{g_{1},n_{1}+a_{1}}\cdots V_{g_{q},n_{q}+a_{q}} & \leqslant C_{1}^{q}\prod_{j=1}^{q}\frac{\left(4\pi^{2}\right)^{2g_{j}+a_{j}+n_{j}-3}\left(2g_{j}+a_{j}+n_{j}-3\right)!}{\sqrt{g_{j}+\max\left\{ \lfloor\frac{a_{j}+n_{j}-2}{2}\rfloor,0\right\} }},\label{eq:WPvolreduced}
\end{align}
where since $V_{0,3}=1$ we interpret the product in (\ref{eq:WPvolreduced})
as only over triples with $g_{j}+\max\left\{ \lfloor\frac{a_{j}+n_{j}-2}{2}\rfloor,0\right\} >0$.
We also see by Theorem \ref{Variable n asymptotic} that 
\begin{equation}
V_{g_{0},a_{0}+k}\leqslant C_{1}\left(4\pi^{2}\right)^{2g_{0}+a_{0}+k-3}\left(2g_{0}+a_{0}+k-3\right)!,\label{gnought}
\end{equation}
and 
\begin{equation}
V_{g,n}=\frac{B}{\sqrt{g}}\left(2g-3+n\left(g\right)\right)!\left(4\pi^{2}\right)^{2g-3+n\left(g\right)}\left(1+O\left(\frac{1+n\left(g\right)^{2}}{g}\right)\right).\label{Vg-asymp}
\end{equation}
We introduce the notation $\text{\ensuremath{\overline{a_{j}+n_{j}}}}\eqdf\max\left\{ \lfloor\frac{a_{j}+n_{j}-2}{2}\rfloor,0\right\} $.
By applying (\ref{eq:WPvolreduced}), (\ref{gnought}) and (\ref{Vg-asymp})
and noting that $n_{i}!\geqslant1$ for each $i$, we calculate that 

\begin{align*}
 & \sum_{\mathcal{A}}\frac{n!}{a_{0}!\cdot\cdots\cdot a_{q}!}.\frac{V_{g_{0},n_{0}+k}\cdot V_{g_{1},n_{1}+a_{1}}\cdot\cdots\cdot V_{g_{q},n_{q}+a_{q}}}{V_{g,n}\cdot n_{0}!n_{1}!\cdots n_{q}!}\\
\ll & \left(2g_{0}+k+a_{0}-3\right)!\sum_{\mathcal{A}}\frac{C_{1}^{q}\sqrt{g}}{\prod_{j=1}^{q}\sqrt{g_{j}+\ensuremath{\overline{a_{j}+n_{j}}}}}\frac{n!}{\prod_{j=0}^{q}a_{j}!}\frac{\prod_{j=1}^{q}\left(2g_{j}+a_{j}+n_{j}-3\right)!}{\left(2g+n-3\right)!}.
\end{align*}
The result then follows from the fact that 
\[
\sum_{\mathcal{A}}\frac{C_{1}^{q}\sqrt{g}}{\prod_{j=1}^{q}\sqrt{g_{j}+\ensuremath{\overline{a_{j}+n_{j}}}}}\frac{n!}{\prod_{j=0}^{q}a_{j}!}\frac{\prod_{j=1}^{q}\left(2g_{j}+a_{j}+n_{j}-3\right)!}{\left(2g+n-3\right)!}\ll\frac{n^{a_{0}}}{g^{m}},
\]
which is proved in Lemma \ref{computations}.
\end{proof}
We now need to prove Lemma \ref{computations}, which is purely computational.
\begin{lem}
\label{computations}Let $n=o\left(\sqrt{g}\right)$ and let $g_{0},a_{0},n_{0}$
and $k$ be given with $m=2g_{0}+a_{0}+k-2\leqslant3\log g-2$ and
$1\leqslant q\leqslant k-2n_{0}$. With $\mathcal{A}$ as in Lemma
\ref{WpVOLest}, we have
\begin{equation}
\sum_{\mathcal{A}}\frac{C_{1}^{q}\sqrt{g}}{\prod_{j=1}^{q}\sqrt{g_{j}+\ensuremath{\overline{a_{j}+n_{j}}}}}\frac{n!}{\prod_{j=0}^{q}a_{j}!}\frac{\prod_{j=1}^{q}\left(2g_{j}+a_{j}+n_{j}-3\right)!}{\left(2g+n-3\right)!}\ll\frac{n^{a_{0}}}{g^{m}}.\label{Grimness}
\end{equation}
\end{lem}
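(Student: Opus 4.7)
The strategy is to decouple the $(a_j)$-summation in (\ref{Grimness}) from the $(g_j, n_j)$-summation via a generating-function identity, thereby reducing the problem to a form that can be handled as in the proof of Lemma \ref{Summing over}.

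First, observe that the square-root factor is essentially $(a_j)$-independent: since $n_j \geqslant 1$ implies $\lfloor (a_j + n_j + 1)/2 \rfloor \geqslant 1$, we have $\sqrt{g_j + \lfloor (a_j + n_j + 1)/2 \rfloor} \geqslant \sqrt{g_j + 1}$, so the ratio is bounded above by $\sqrt{g}/\prod_j \sqrt{g_j + 1}$. Setting $y_j := 2g_j + n_j - 3$ and assuming momentarily that each $y_j \geqslant 0$, I would apply the identity (derived from $\sum_{a \geqslant 0}\frac{(y+a)!}{a!} z^a = \frac{y!}{(1-z)^{y+1}}$)
\[
\sum_{\substack{(a_j)_{j=1}^q\\ \sum a_j = n - a_0}} \prod_{j=1}^q \frac{(y_j + a_j)!}{a_j!} = \prod_j y_j! \cdot \binom{2g + n - m - 3}{n - a_0}
\]
to perform the $a_j$-summation in closed form. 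After substitution into (\ref{Grimness}) and some cancellation, the left-hand side becomes
\[
\binom{n}{a_0} \cdot \frac{(2g+n-m-3)!}{(2g+n-3)!} \cdot \sum_{(g_j, n_j)} \frac{C_1^q \sqrt{g}}{\prod_j \sqrt{g_j + 1}} \cdot \frac{\prod_j y_j!}{(2g + a_0 - m - 3)!},
\]
where the residual sum ranges over $(g_j, n_j)_{j=1}^q$ with $g_j \geqslant 0$, $n_j \geqslant 1$, subject to $\sum n_j = k - 2n_0$ and $\sum(2g_j + n_j) = 2g + a_0 - m + 2q - 2$. The prefactor $\binom{n}{a_0} \leqslant n^{a_0}/a_0!$ and the factorial ratio $(2g+n-m-3)!/(2g+n-3)! \ll g^{-m}$ together supply the target $n^{a_0}/g^m$.

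Second, I would bound the residual $(g_j, n_j)$-sum by an $O(1)$ quantity after summation over $q$. Setting $N := 2g + a_0 - m - 3$ and $Y := \sum y_j = N - q + 1$, the multinomial identity $\prod y_j! = Y!/\binom{Y}{\vec y}$ together with $Y!/N! \ll g^{-(q-1)}$ gives $\prod y_j!/N! \ll g^{-(q-1)}/\binom{Y}{\vec y}$. The dominant contribution comes from the $q$ extremal compositions (one $y_j = Y$, the others zero), where $\binom{Y}{\vec y} = 1$, and a direct estimate shows $\sum_{\vec y} 1/\binom{Y}{\vec y} = O(q)$. For fixed $\vec y$, the number of $(g_j, n_j)$ reproducing it under the side constraint $\sum n_j = k - 2n_0$ is $\ll m^{q-1}/(q-1)!$, since $k - 2n_0 \leqslant m$. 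After symmetrization (costing a factor of $q$), the square-root factor is $\ll 1$ in the dominant regime where one $g_j$ is comparable to $g$. Combining, the residual sum is $\ll q \cdot C_1^q \cdot m^{q-1}/((q-1)! g^{q-1})$, and summation over $1 \leqslant q \leqslant k$ yields a geometric series in $C_1 m/g$ that converges uniformly for $m \leqslant 3\log g - 2$.

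The main obstacle will be handling the boundary configurations with $y_j < 0$, corresponding to $g_j = 0$ and $n_j \in \{1, 2\}$, which violate the hypothesis of the generating-function identity. Such pieces must be extracted and estimated directly; since $\sum n_j \leqslant k \leqslant m \leqslant 3\log g - 2$, the number of boundary pieces is bounded polylogarithmically and their contribution is absorbed without changing the asymptotic bound. Careful bookkeeping of the joint constraints $\sum a_j = n - a_0$ and $\sum n_j = k - 2n_0$ across $\mathcal{A}$ is also required to avoid overcounting, particularly when separating the boundary contributions from the main term.
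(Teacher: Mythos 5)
Your proposal takes a genuinely different route from the paper. The paper conditions on $L := 2g+n-3-m-\max_i\left(2g_i+a_i+n_i-3\right)$, establishes the pointwise bound (\ref{eq:Claim}) term-by-term through a Stirling/Jensen case analysis, and then controls $|\mathcal{A}|$ crudely; you instead try to sum the $a_j$ out in closed form via the negative-binomial identity and extract the prefactor $\binom{n}{a_0}\cdot(2g+n-m-3)!/(2g+n-3)!$, which does correctly produce $n^{a_0}/g^m$. If the residual $(g_j,n_j)$-sum could be controlled uniformly this would be a cleaner argument, and the prefactor computation itself is correct.

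However, there is a genuine gap precisely at the place you call ``the main obstacle.'' The generating-function identity requires every $y_j = 2g_j+n_j-3 \geqslant 0$, i.e. $(g_j,n_j)\notin\{(0,1),(0,2)\}$. Nothing in the definition of $\mathcal{A}$ prevents a large fraction of the $q$ pieces from being of this form: $q$ runs up to $k-2n_0 \leqslant m+2 = O(\log g)$ and most $n_j$ can equal $1$. For such $j$ the constraint $2g_j+a_j+n_j\geqslant 3$ forces a lower bound $a_j\geqslant -y_j$, and the corresponding $a_j$-sum is no longer a negative-binomial series (for $y_j=-1$ one gets $\sum_{a\geqslant 1}z^a/a=-\log(1-z)$, for $y_j=-2$ a dilogarithm-type series). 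The clean factor $\binom{2g+n-m-3}{n-a_0}$ then no longer appears, and because the single linear constraint $\sum_{j} a_j = n-a_0$ couples all indices, the ``boundary'' and ``interior'' indices cannot be processed independently and then recombined by a polylogarithmic fudge factor. ``Their contribution is absorbed'' is an assertion, not an argument; as stated the decoupling step fails, and the subsequent residual-sum analysis (the $O(q)$ bound on $\sum_{\vec y}1/\binom{Y}{\vec y}$ uniformly in $q=O(\log g)$; that the square-root factor is $\ll 1$, which rests on the unstated observation that $\sum n_j\leqslant m$ forces some $g_{j}\sim g$; the $\ll m^{q-1}/(q-1)!$ count of $(g_j,n_j)$ reproducing a fixed $\vec y$ under the side constraint) is sketched rather than established. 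The approach may be salvageable with real work on the boundary cases, but in its current form it does not constitute a proof.
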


In the proof of Lemma \ref{computations}, we will frequently apply
the following observation: if $x_{i}\geqslant0$ with $\sum_{i=1}^{s}x_{i}=A$,
then 
\begin{equation}
\prod_{i=1}^{s}x_{i}!\leqslant A!,\label{eq:Elementary}
\end{equation}
which can be seen by the fact that the multinomial coefficient ${A \choose x_{1},\dots,x_{s}}$
is bounded below by $1$. 
\begin{proof}
We first note that $q\leqslant3\log g$. For $\left\{ \left(g_{1},a_{1},n_{1}\right),\dots,\left(g_{q},a_{q},n_{q}\right)\right\} \in\mathcal{A}$,
we claim that if $\max_{1\leqslant i\leqslant q}\left(2g_{i}+a_{i}+n_{i}-3\right)\leqslant2g+n-3-m-8q$
then 
\begin{equation}
\frac{C_{1}^{q}\sqrt{g}}{\prod_{j=1}^{q}\sqrt{g_{j}+\ensuremath{\overline{a_{j}+n_{j}}}}}\frac{n!}{\prod_{j=0}^{q}a_{j}!}\frac{\prod_{j=1}^{q}\left(2g_{j}+a_{j}+n_{j}-3\right)!}{\left(2g+n-3\right)!}\ll g^{-\frac{7}{2}q}.\label{eq:Claim}
\end{equation}
This estimate is analogous to (\ref{eq:claim first proof}). Once
we have established (\ref{eq:Claim}) we shall apply a rough counting
argument to bound the contribution of such terms to the sum (\ref{Grimness}). 

Let $\max_{1\leqslant i\leqslant q}\left(2g_{i}+a_{i}+n_{i}-3\right)=2g+n-3-m-L$.
First we treat the case that $L\geqslant\frac{1}{2}\left(2g+n-m-3\right)$.
We apply Stirling's approximation (\ref{Sterling's approximation})
to see that 
\begin{align}
\frac{\left(2g_{i}+n_{i}+a_{i}-3\right)!}{\sqrt{g_{j}+\ensuremath{\overline{a_{j}+n_{j}}}}} & <c_{2}\frac{\sqrt{2\pi\left(2g_{i}+n_{i}+a_{i}-3\right)}}{\sqrt{g_{j}+\ensuremath{\overline{a_{j}+n_{j}}}}}\cdot\left(\frac{2g_{i}+a_{i}+n_{i}-3}{e}\right)^{2g_{i}+a_{i}+n_{i}-3}\nonumber \\
 & <4\sqrt{\pi}\cdot\left(\frac{2g_{i}+a_{i}+n_{i}-3}{e}\right)^{2g_{i}+a_{i}+n_{i}-3}.\label{stirling 1}
\end{align}
 Applying Stirling's approximation again, we see that 
\begin{align}
\frac{\sqrt{g}}{\left(2g+n-3\right)!} & >\frac{1}{c_{2}}\frac{\sqrt{g}}{\sqrt{2\pi\left(2g+n-3\right)}}\cdot\left(\frac{e}{2g+n-3}\right)^{2g+n-3}\nonumber \\
 & \gg\left(\frac{e}{2g+n-3}\right)^{2g+n-3}.\label{stirling 2}
\end{align}
We also note that 
\begin{align}
\frac{n!}{a_{0}!\cdots a_{q}!} & \leqslant q^{n},\label{eq:Bound on n contribution}
\end{align}
by the multinomial theorem. By (\ref{stirling 1}), (\ref{stirling 2})
and (\ref{eq:Bound on n contribution}),
\begin{align}
 & \frac{C_{1}^{q}\sqrt{g}}{\prod_{j=1}^{q}\sqrt{g_{j}+\ensuremath{\overline{a_{j}+n_{j}}}}}\frac{n!}{\prod_{j=0}^{q}a_{j}!}\frac{\prod_{j=1}^{q}\left(2g_{j}+a_{j}+n_{j}-3\right)!}{\left(2g+n-3\right)!}\nonumber \\
\ll & q^{n}\cdot\left(4C_{1}\sqrt{\pi}\right)^{q}\frac{\prod_{j=1}^{q}\left(2g_{j}+a_{j}+n_{j}-3\right)^{\left(2g_{j}+a_{j}+n_{j}-3\right)}}{\left(2g+n-3\right)^{\left(2g+n-3\right)}}.\label{stirling 3}
\end{align}
We now bound the expression in (\ref{stirling 3}). Given $s$ integers
$x_{i}>0$, Jensen's inequality for concave functions applied to the
function $\log x$ tells us that
\[
\frac{\sum_{i=1}^{s}x_{i}\log x_{i}}{\sum_{i=1}^{s}x_{i}}\leqslant\log\left(\frac{\sum_{i=1}^{s}x_{i}^{2}}{\sum_{i=1}^{s}x_{i}}\right).
\]
If $\sum_{i=1}^{s}x_{i}=A$ and $\max_{1\leqslant i\leqslant s}x_{i}=B$,
then 
\begin{align*}
\sum_{i=1}^{s}x_{i}\log x_{i} & \leqslant A\log\left(\frac{\sum_{i=1}^{s}x_{i}^{2}}{\sum_{i=1}^{s}x_{i}}\right)\\
 & \leqslant A\log B,
\end{align*}
and by exponentiating, we conclude that
\begin{equation}
\prod_{i=1}^{s}x_{i}^{x_{i}}\leqslant B^{A}.\label{eq:exponentbound}
\end{equation}
Note that (\ref{eq:exponentbound}) also holds if instead we just
require $x_{i}\geqslant0$ since we can apply Jensen's inequality
with only the non-zero terms. Recall that $\max_{1\leqslant i\leqslant q}\left(2g_{i}+a_{i}+n_{i}-3\right)=2g+n-3-m-L$
for $L\geqslant\frac{1}{2}\left(2g+n-m-3\right)$. Since $\sum_{i=1}^{q}\left(2g_{i}-2+n_{i}+a_{i}\right)=2g-2+n-m$,
then in particular, $\sum_{i=1}^{q}\left(2g_{i}-3+n_{i}+a_{i}\right)\leqslant2g+n-m-3$
and we can apply (\ref{eq:exponentbound}) to (\ref{stirling 3})
to calculate that
\begin{align*}
 & q^{n}\cdot\left(4C_{1}\sqrt{\pi}\right)^{q}\frac{\prod_{j=1}^{q}\left(2g_{j}+a_{j}+n_{j}-3\right)^{\left(2g_{j}+a_{j}+n_{j}-3\right)}}{\left(2g+n-3\right)^{\left(2g+n-3\right)}}\\
\ll & q^{n}\cdot\left(4C_{1}\sqrt{\pi}\right)^{q}\cdot\frac{\left(2g+n-3-m-L\right)^{\left(2g+n-3\right)}}{\left(2g+n-3\right)^{\left(2g+n-3\right)}}\\
\leqslant & q^{n}\cdot2^{3q}C_{1}^{q}\left(\frac{1}{2}\right)^{2g+n-3}\leqslant q^{n}\left(\frac{1}{2}\right)^{2g+n-3-3q-q\log_{2}C_{1}}.
\end{align*}
Since $q\leqslant3\log g$ and $n=o\left(\sqrt{g}\right)$,
\begin{align*}
q^{n}\left(\frac{1}{2}\right)^{2g+n-3-3q-q\log_{2}C_{1}} & \ll\left(\frac{1}{2}\right)^{g}=g^{-\frac{g}{\log_{2}g}}\ll g^{-\frac{7}{2}q}.
\end{align*}
This justifies the claim in the case that $L\geqslant\frac{1}{2}\left(2g+n-m-3\right)$.

In order to treat the remaining cases, we first make the following
observation. Recalling that $\sum_{j=1}^{q}\left(2g_{j}+a_{j}+n_{j}-3\right)=2g+n-m-3-\left(q-1\right)$
and that $\text{\ensuremath{\overline{a_{j}+n_{j}}}}\eqdf\max\left\{ \lfloor\frac{a_{j}+n_{j}-2}{2}\rfloor,0\right\} $,
we see that
\begin{align*}
\sum_{j=1}^{q}\left(g_{j}+\overline{a_{j}+n_{j}}\right) & \geqslant\frac{1}{2}\sum_{j=1}^{q}\left(2g_{j}+a_{j}+n_{j}-3\right)\geqslant\frac{2g+n-m-3-\left(q-1\right)}{2}.
\end{align*}
For any $q$ positive integers $x_{i}$, we have 
\[
\prod_{i=1}^{q}x_{i}\geqslant\sum_{i=1}^{q}x_{i}-\left(q-1\right).
\]
Then
\begin{align*}
\prod_{j=1}^{q}\left(g_{j}+\overline{a_{j}+n_{j}}\right) & \geqslant\frac{2g+n-m-3-\left(q-1\right)}{2}-q-1\gg g,
\end{align*}
since $n=o\left(\sqrt{g}\right)$ and $q,m=O\left(\log g\right)$.
We see that
\begin{align*}
\frac{\sqrt{g}}{\prod_{j=1}^{q}\sqrt{g_{j}+\ensuremath{\overline{a_{j}+n_{j}}}}} & \ll1,
\end{align*}
and therefore 
\begin{align}
 & \frac{C_{1}^{q}\sqrt{g}}{\prod_{j=1}^{q}\sqrt{g_{j}+\overline{a_{j}+n_{j}}}}\frac{n!}{\prod_{j=0}^{q}a_{j}!}\frac{\prod_{j=1}^{q}\left(2g_{j}+a_{j}+n_{j}-3\right)!}{\left(2g+n-3\right)!}\nonumber \\
\ll & \frac{C_{1}^{q}n!}{\prod_{j=0}^{q}a_{j}!}\frac{\prod_{j=1}^{q}\left(2g_{j}+a_{j}+n_{j}-3\right)!}{\left(2g+n-3\right)!}.\label{eq:Boundings}
\end{align}
The expression in (\ref{eq:Boundings}) will be easier to work with
for the remaining cases. Recalling that $\max_{1\leqslant i\leqslant q}\left(2g_{i}+a_{i}+n_{i}-3\right)=2g+n-3-m-L$,
we now treat the case that $8q\leqslant L\leqslant n-a_{0}$. Since
$\max_{1\leqslant i\leqslant q}\left(2g_{i}+a_{i}+n_{i}-3\right)=2g+n-3-m-L$,
this forces $\max_{1\leqslant i\leqslant q}a_{i}\geqslant n-a_{0}-L$.
Indeed if $\max_{1\leqslant i\leqslant q}a_{i}<n-a_{0}-L$ we would
have that 
\[
\max_{1\leqslant i\leqslant q}\left(2g_{i}+n_{i}\right)>2g-2g_{0}-n_{0},
\]
which is not possible. Since there is an $1\leqslant i\leqslant q$
such that $2g_{i}+a_{i}+n_{i}-3=2g+n-3-m-L$ and we have $\sum_{j=1,j\neq q}^{q}\left(2g_{i}+a_{i}+n_{i}-3\right)=L-\left(q-1\right)\leqslant L$,
we apply (\ref{eq:Elementary}) to see that
\begin{align}
\prod_{j=1}^{q}\left(2g_{j}+a_{j}+n_{j}-3\right)! & =\left(2g+n-3-m-L\right)!\prod_{j=1,j\neq i}^{q}\left(2g_{j}+a_{j}+n_{j}-3\right)!\nonumber \\
 & \leqslant L!\left(2g+n-3-m-L\right)!.\label{eq:bounded v}
\end{align}
We then use the rough bound
\begin{equation}
\frac{n!}{\prod_{j=0}^{q}a_{j}!}\leqslant\frac{n!}{\left(\max_{1\leqslant i\leqslant q}a_{i}\right)!}\leqslant\frac{n!}{\left(n-a_{0}-L\right)!}\ll n^{a_{0}+L},\label{Other bound with n}
\end{equation}
together with (\ref{eq:bounded v}), to see that
\begin{align}
\frac{n!}{\prod_{j=0}^{q}a_{j}!}\frac{\prod_{j=1}^{q}\left(2g_{j}+a_{j}+n_{j}-3\right)!}{\left(2g+n-3\right)!} & \ll\frac{n^{a_{0}+L}L!\left(2g+n-3-m-L\right)!}{\left(2g+n-3\right)!}\ll\frac{n^{a_{0}+L}}{g^{m+L}}L!.\label{eq:upperboundL}
\end{align}
By applying Stirling's approximation (\ref{Sterling's approximation}),
\[
\frac{n^{a_{0}+L}}{g^{m+L}}L!\ll\sqrt{L}\left(\frac{n\cdot L}{e\cdot g}\right)^{L}\cdot\frac{n^{a_{0}}}{g^{m}}.
\]
If $L=8q$ then since $n=o\left(\sqrt{g}\right)$ and $q\leqslant3\log g$,
\begin{align}
C_{1}^{q}\sqrt{L}\left(\frac{n\cdot L}{e\cdot g}\right)^{L} & \ll C_{1}^{q}g^{-4q}\left(8q\right)^{8q+\frac{1}{2}}\ll g^{\frac{-7q}{2}}.\label{Equation45}
\end{align}
Now if $8q<L\leqslant n-a_{0}$,
\begin{align*}
C_{1}^{q}\frac{n^{a_{0}}}{g^{m}}.\sqrt{L}\left(\frac{n\cdot L}{e\cdot g}\right)^{L} & \ll C_{1}^{q}\frac{\sqrt{L}}{\sqrt{g}}\left(\frac{n\cdot L}{e\cdot g}\right)^{L}\cdot\left(\frac{n\cdot8q}{e\cdot g}\right)^{8q}\cdot\left(\frac{e\cdot g}{n\cdot8q}\right)^{8q}\\
 & \ll g^{\frac{-7q}{2}}\cdot\left(\frac{L}{8q}\right)^{8q}\cdot\left(\frac{n\cdot L}{e\cdot g}\right)^{L-8q}\\
 & \leqslant g^{\frac{-7q}{2}}\cdot e^{L-8q}\cdot\left(\frac{n\cdot L}{e\cdot g}\right)^{L-8q}\ll g^{\frac{-7q}{2}},
\end{align*}
which justifies the claim (\ref{eq:Claim}) in the case that $8q\leqslant L\leqslant n-a_{0}$.
Finally we treat the case that $8q<n-a_{0}<L\leqslant\frac{2g+n-3-m}{2}$.
We calculate, with (\ref{eq:bounded v}) and (\ref{eq:Bound on n contribution}),
that
\begin{align}
\frac{C_{1}^{q}n!}{\prod_{j=0}^{q}a_{j}!}\frac{\prod_{j=1}^{q}\left(2g_{j}+a_{j}+n_{j}-3\right)!}{\left(2g+n-3\right)!} & \ll\frac{C_{1}^{q}\cdot q^{n}\cdot L!\left(2g+n-m-3-L\right)!}{\left(2g+n-3\right)!}\nonumber \\
 & \ll\frac{C_{1}^{q}\cdot q^{n}\left(n-a_{0}\right)!\left(2g+a_{0}-m-3\right)!}{\left(2g+n-m\right)!}\nonumber \\
 & \ll\frac{g^{3\log C_{1}}\left(3\log g\right)^{n+1}n^{n}}{\left(2g\right)^{n}}\ll g^{-\frac{7}{2}q},\label{eq:Claim part 2}
\end{align}
which justifies the claim (\ref{eq:Claim}) for $8q<n-a_{0}<L\leqslant\frac{2g+n-3-m}{2}$.
Note that in the case that $n\leqslant8q-n_{0}$ we can simply apply
the argument in (\ref{eq:Claim part 2}) with $L\geqslant8q$. The
claim (\ref{eq:Claim}) is now proved.

Now we have established (\ref{eq:Claim}), we apply the very rough
bound for the size of the set $\mathcal{A}$,
\[
\left|\mathcal{A}\right|\ll g^{3q},
\]
together with (\ref{eq:Claim}) to calculate
\begin{align}
 & \sum_{\substack{\left\{ \left(g_{i},a_{i},n_{i}\right)\right\} _{i=1}^{q}\in\mathcal{A}\\
\max_{1\leqslant i\leqslant q}\left(2g_{i}+a_{i}+n_{i}-3\right)\leqslant2g+n-2-m-8q
}
}\frac{C_{1}^{q}\sqrt{g}}{\prod_{j=1}^{q}\sqrt{g_{j}+\ensuremath{\overline{a_{j}+n_{j}}}}}\frac{n!\prod_{j=1}^{q}\left(2g_{j}+a_{j}+n_{j}-3\right)!}{\prod_{j=0}^{q}a_{j}!\left(2g+n-3\right)!}\nonumber \\
\ll & \frac{n^{a_{0}}}{g^{m}}\sum_{\substack{\left\{ \left(g_{i},a_{i},n_{i}\right)\right\} _{i=1}^{q}\in\mathcal{A}\\
\max_{1\leqslant i\leqslant q}\left(2g_{i}+a_{i}+n_{i}-3\right)\leqslant2g+n-2-m-8q
}
}g^{-\frac{7}{2}q}\ll\left|\mathcal{A}\right|\cdot\frac{n^{a_{0}}}{g^{m}}\cdot g^{-\frac{7}{2}q}\ll\frac{n^{a_{0}}}{g^{m}}\cdot g^{-\frac{q}{2}}.\label{eq:Summationpart1}
\end{align}
We now consider the sum
\begin{equation}
\sum_{\substack{\left\{ \left(g_{i},a_{i},n_{i}\right)\right\} _{i=1}^{q}\in\mathcal{A}\\
\max_{1\leqslant i\leqslant q}\left(2g_{i}+a_{i}+n_{i}-3\right)>2g+n-3-m-8q
}
}\frac{C_{1}^{q}\sqrt{g}}{\prod_{j=1}^{q}\sqrt{g_{j}+\ensuremath{\overline{a_{j}+n_{j}}}}}\frac{n!\prod_{j=1}^{q}\left(2g_{j}+a_{j}+n_{j}-3\right)!}{\prod_{j=0}^{q}a_{j}!\left(2g+n-3\right)!}.\label{eq:less than q}
\end{equation}
 Let $\max_{1\leqslant i\leqslant q}\left(2g_{i}+a_{i}+n_{i}-3\right)=2g+n-3-m-L$.
Since $2g_{j}+a_{j}+n_{j}-3\geqslant0$ and $\sum_{j=1}^{q}2g_{j}+a_{j}+n_{j}-3=2g+n-m-3-\left(q-1\right)$,
we see that $L\geqslant q-1$. By the same arguments as in (\ref{eq:upperboundL})
and (\ref{Equation45}), if $q-1\leqslant L\leqslant8q\leqslant24\log g$
then 
\begin{equation}
\frac{n!}{\prod_{j=0}^{q}a_{j}!}\frac{\prod_{j=1}^{q}\left(2g_{j}+a_{j}+n_{j}-3\right)!}{\left(2g+n-3\right)!}\ll\frac{n^{a_{0}+L}}{g^{m+L}}L!\ll\frac{n^{a_{0}}}{g^{m}}\cdot g^{-\frac{L}{4}}.\label{eq:bound large r}
\end{equation}
We now bound the number of $\left\{ \left(g_{1},a_{1},n_{1}\right),\dots,\left(g_{q},a_{q},n_{q}\right)\right\} \in\mathcal{A}$
with $\max_{1\leqslant i\leqslant q}\left(2g_{i}+a_{i}+n_{i}-3\right)$
$=2g+n-3-m-L$. Assume we have that $2g_{1}+a_{1}+n_{1}-3=2g+n-3-m-L$
. The remaining $q-1$ triples satisfy 
\[
\sum_{2\leqslant i\leqslant q}\left(2g_{i}+a_{i}+n_{i}\right)=L+3\left(q-1\right).
\]
Since $\sum_{i=1}^{q}n_{i}=k-2n_{0}$ and $\sum_{j=1}^{q}a_{j}=n-a_{0}$,
the triple $\left(g_{1},a_{1},n_{1}\right)$ is determined by the
choice of $\left\{ \left(g_{2},a_{2},n_{2}\right),\dots,\left(g_{q},a_{q},n_{q}\right)\right\} $.
Then the number of $\left\{ \left(g_{i},a_{i},n_{i}\right)\right\} _{i=1}^{q}\in\mathcal{A}$
with $\max_{1\leqslant i\leqslant q}\left(2g_{i}+a_{i}+n_{i}-3\right)=2g+n-3-m-L$
is therefore bounded above by 
\begin{equation}
{L+6\left(q-1\right) \choose 3\left(q-1\right)}.\label{eq:=000023combinations}
\end{equation}
Therefore combining (\ref{eq:Boundings}), (\ref{eq:bound large r})
and (\ref{eq:=000023combinations}) we see that the sum (\ref{eq:less than q})
satisfies
\begin{align}
 & \sum_{\substack{\left\{ \left(g_{i},a_{i},n_{i}\right)\right\} _{i=1}^{q}\in\mathcal{A}\\
\max_{1\leqslant i\leqslant q}\left(2g_{i}+a_{i}+n_{i}-3\right)>2g+n-3-m-8q
}
}\frac{C_{1}^{q}\sqrt{g}}{\prod_{j=1}^{q}\sqrt{g_{j}+\ensuremath{\overline{a_{j}+n_{j}}}}}\frac{n!\prod_{j=1}^{q}\left(2g_{j}+a_{j}+n_{j}-3\right)!}{\prod_{j=0}^{q}a_{j}!\left(2g+n-3\right)!}\nonumber \\
\ll & \frac{n^{a_{0}}}{g^{m}}\sum_{L=q-1}^{8q}{L+6\left(q-1\right) \choose 3\left(q-1\right)}\frac{C_{1}^{q}}{g^{\frac{L}{4}}}\ll\frac{n^{a_{0}}}{g^{m}}.\label{Summationpart2}
\end{align}
Combining (\ref{eq:Summationpart1}) and (\ref{Summationpart2}),
the result follows.
\end{proof}

\lyxaddress{Will Hide, \\
Department of Mathematical Sciences,\\
Durham University, \\
Lower Mountjoy, DH1 3LE Durham,\\
United Kingdom\\
\texttt{william.hide@durham.ac.uk}~\\
}
\end{document}